\def\A{{\mathbb A}}
\def\C{{\mathbb C}}
\def\F{{\mathbb F}}
\def\Q{{\mathbb Q}}
\def\R{{\mathbb R}}
\def\Z{{\mathbb Z}}
\theoremstyle{plain}% default style
\newtheorem{theorem}{Theorem}[section]
\newtheorem{lemma}[theorem]{Lemma}
\newtheorem{proposition}[theorem]{Proposition}
\newtheorem{corollary}[theorem]{Corollary}
\newtheorem{remark}[theorem]{Remark}
\newtheorem{definition and lemma}[theorem]{Definition and Lemma}
\newtheorem{problem}[theorem]{Problem}
\theoremstyle{definition} % definition style
\newtheorem{definition}[theorem]{Definition}
\newtheorem{example}[theorem]{Example}
\theoremstyle{remark} % remark style
\DeclareFontFamily{U}{wncy}{}
\DeclareFontShape{U}{wncy}{m}{n}{<->wncyr10}{}
\DeclareSymbolFont{mcy}{U}{wncy}{m}{n}
\DeclareMathSymbol{\Sha}{\mathord}{mcy}{"58}
\title[Automorphism groups of polarized abelian fourfolds]{On the automorphism groups of simple polarized abelian fourfolds over finite fields and Jordan constants}
\author{WonTae Hwang}
\address{School of Mathematics, Korea Institute for Advanced Study, 85 Hoegiro, Dongdaemun-gu, Seoul 02455, South Korea}
\email{hwangwon@kias.re.kr}
\begin{document}

\subjclass[2010]{Primary 11G10, 11G25, 14K02, 20B25, 20E07}

\keywords{Polarized abelian fourfolds over finite fields, Automorphism groups, Central simple division algebras, Jordan constants}

\maketitle

\begin{abstract}
We give a classification of maximal elements of the set of finite groups that can be realized as the full automorphism groups of simple polarized abelian fourfolds over finite fields. As an application, we compute the Jordan constants of the automorphism groups of simple abelian fourfolds over finite fields.  
%\noindent
%\keywords{Simple polarized abelian fourfolds over finite fields, Automorphism groups} \\
%{Mathematics Subject Classification 2010: 11G10, 11G25, 14K02, 20B25}
\end{abstract}

\section{Introduction}
Let $k$ be a finite field, and let $X$ be an abelian variety of dimension $g$ over $k.$ We denote the endomorphism ring of $X$ over $k$ by $\textrm{End}_k(X)$. It is a free $\Z$-module of rank $\leq 4g^2.$ We also let $\textrm{End}^0_k(X)=\textrm{End}_k(X) \otimes_{\Z} \Q.$ This $\Q$-algebra $\textrm{End}_k^0(X)$ is called the \emph{endomorphism algebra of $X$ over $k.$} Then $\textrm{End}_k^0(X)$ is a finite dimensional semisimple algebra over $\Q$ with $\textrm{dim}_{\Q} \textrm{End}_k^0(X) \leq 4g^2.$ Moreover, if $X$ is simple, then $\textrm{End}_k^0(X)$ is a division algebra over $\Q$. Now, it is well known that $\textrm{End}_k (X)$ is a $\Z$-order in $\textrm{End}_k^0(X).$ The group $\textrm{Aut}_k(X)$ of the automorphisms of $X$ over $k$ is not finite, in general. But once we fix a polarization $\mathcal{L}$ on $X$, then the group $\textrm{Aut}_k(X,\mathcal{L})$ of the automorphisms of the polarized abelian variety $(X,\mathcal{L})$ is finite. This fact leads us to the following natural problem.
\begin{problem}\label{main prob}
Given a finite group $G$ and an integer $g \geq 1,$ do there exist a field $k$ and a polarized abelian variety $(X,\mathcal{L})$ of dimension $g $ over $k$ such that $G=\textrm{Aut}_k (X,\mathcal{L})?$ 
\end{problem}
\indent Regarding the above problem, in characteristic zero, Birkenhake and Lange \cite[\S13]{BL(2004)} (resp.\ Birkenhake, Gonz$\acute{\textrm{a}}$lez, and Lange \cite{BGL(1999)}) classified all finite subgroups $G$ of the automorphism group $\textrm{Aut}_k(X)$ that is maximal in the isogeny class of $X$, where $X$ is an arbitrary complex abelian surface (resp.\ a complex torus of dimension $3$).\\ 
\indent The goal of this paper is to classify all finite groups that can be realized as the automorphism group of a simple polarized abelian variety $(X,\mathcal{L})$ of dimension $4$ over a finite field $k,$ which are maximal in the sense of Definition \ref{def 20} below, and hence, to give an answer for Problem \ref{main prob} for the case when $g=4,$ $X$ is simple, and $k$ is a finite field. \\
\indent Along this line, the author \cite{8} (resp.\ \cite{9}) gave such a classification for arbitrary polarized abelian surfaces (resp.\ simple polarized abelian varieties of odd prime dimension) over finite fields. One of the main results of \cite{8} implies that the automorphism groups of simple polarized abelian surfaces over finite fields need not be abelian (see \cite[Theorem 6.5]{8}), while the main theorem of \cite{9} says that the automorphism groups of simple polarized abelian varieties of odd prime dimension over finite fields are necessarily cyclic (see \cite[Theorem 4.1]{9}). The methods that we used to prove the main results of \cite{8} and \cite{9} were essentially somewhat different. In this paper, as next step toward goal of considering other higher dimensional cases, we will give a classification of the automorphism groups of simple polarized abelian varieties of dimension $4$ over finite fields by exhibiting an explicit list of such groups. (This kind of classification might not only be interesting in its own sake, but also have some applications to other areas of mathematics such as Group Theory (see Section \ref{Jordan sec} below for an example) and Algebraic Geometry (for instance, we can try to see the geometry of an algebraic variety obtained from an abelian variety $X$ quotiented by a finite group $G$ of automorphisms of $X$.) One main difficulty arises from the fact that we need to deal with central simple algebras with larger even $\Q$-dimension compared to those of \cite{8} and \cite{9}, which, in turn, results in the occurrence of some new finite groups that we need to consider. Consequently, we need a somewhat different method to prove the realizability of those newly introduced groups as the automorphism group of some simple polarized abelian fourfolds over finite fields.  \\

Our first main result of the paper is the following
%\begin{theorem}\label{main theorem1}
  %The automorphism groups of simple polarized abelian varieties $(X, \mathcal{L})$ of odd prime dimension over finite fields are cyclic.
%\end{theorem}
%More precisely, we will prove the following result.
\begin{theorem}\label{main theorem2}
  The possibilities for the (maximal) automorphism group of a simple polarized abelian variety $(X, \mathcal{L})$ of dimension $4$ over a finite field is given as follows: \\
(1) (Cyclic groups) $C_n$ for $n \in \{2,4,6,8,10,12,16,20,24,30 \}$; \\
(2) (Non-cyclic groups) $C_5 \rtimes C_8$, $C_3 \rtimes C_{16},$ $C_5 \rtimes C_{16}$.
\end{theorem}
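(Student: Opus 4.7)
The plan is to follow the strategy developed in the author's previous work \cite{8}, \cite{9}. The group $\Aut_k(X, \mathcal{L})$ embeds in $D^\times$, where $D := \End_k^0(X)$, as the finite subgroup of those elements $\alpha$ satisfying $\alpha \alpha' = 1$ for the Rosati involution $'$ attached to $\mathcal{L}$; moreover $-1 \in \Aut_k(X, \mathcal{L})$ always. The problem thus splits into three tasks: (a) enumerate the possible $D$, (b) classify the maximal finite subgroups of $D^\times$ compatible with a positive involution and containing $-1$, and (c) verify realizability via Honda--Tate.

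For (a), since $X$ is simple over a finite field, Tate's theorem forces $D$ to be of Albert Type IV with center $F$ a CM field (in the generic case), and $[D:F]^{1/2} \cdot [F:\Q] = 2g = 8$. The main sub-cases are $[F:\Q] \in \{8,4,2\}$, with $D$ respectively a CM octic field, a quaternion division algebra over a CM quartic, or a degree-$16$ central simple algebra over an imaginary quadratic field; Honda--Tate further restricts which such $D$ can actually arise. For the cyclic part of (b), an element of order $n$ generates $\Q(\zeta_n) \hookrightarrow D$, so $\phi(n) \mid 2g = 8$, and since $-1 \in \Aut_k(X, \mathcal{L})$ we also need $2 \mid n$; these two constraints together leave exactly the ten values listed in part~(1), each of which one realises by choosing a Weil $q$-number inside $\Q(\zeta_n)$ and invoking Honda--Tate.

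For the non-cyclic part of (b), I would invoke Amitsur's classification of finite subgroups of division rings, which confines the candidates to binary polyhedral groups and metacyclic groups of the form $C_m \rtimes C_{2^k}$ subject to explicit arithmetic conditions on $m$, $k$, and the conjugation action. Intersecting this list with (i) embeddability in a $D$ of one of the Type IV shapes above, (ii) the existence of a positive involution on $D$ sending each element of the group to its inverse, and (iii) non-emptiness of the resulting isogeny class over some finite field (Honda--Tate), precisely $C_5 \rtimes C_8$, $C_3 \rtimes C_{16}$, and $C_5 \rtimes C_{16}$ survive. The bookkeeping here parallels \cite{9} but is considerably more delicate, because $D$ can now have $\Q$-dimension up to $64$, which introduces new non-cyclic candidates absent from the surface and odd-prime-dimension settings.

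The main obstacle is the realization step~(c) for the three non-cyclic groups: for each $G$ in part~(2) one must exhibit an explicit simple polarized abelian fourfold $(X, \mathcal{L})$ over some finite field with $\Aut_k(X, \mathcal{L})$ equal to $G$ on the nose. Concretely, this entails producing a quaternion division algebra over a suitable CM quartic field in which $G$ embeds, choosing a polarization (an element $\lambda \in D^\times$ with $\lambda' = -\lambda$ satisfying the appropriate positivity), and verifying that no strictly larger finite unitary subgroup of $D^\times$ exists. Establishing this maximality is the delicate ``somewhat different method'' alluded to in the introduction, and is precisely the point that distinguishes the dimension-$4$ case from the odd-prime-dimension case of \cite{9}, where the automorphism group was forced to be cyclic.
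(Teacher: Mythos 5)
Your overall architecture matches the paper's: classify the possible $D=\End_k^0(X)$ via Albert/Tate (CM octic field, quaternion algebra over a quartic CM field, or degree-$4$ division algebra over an imaginary quadratic field — note $\dim_{\Q}D\le 32$ here, not $64$), feed this into Amitsur's classification of finite subgroups of division rings, and realize the survivors via Honda--Tate. The cyclic count ($\varphi(n)\mid 8$ and $2\mid n$ giving exactly the ten listed values) is correct and is how the paper argues.

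The genuine gap is in the sentence ``precisely $C_5\rtimes C_8$, $C_3\rtimes C_{16}$, and $C_5\rtimes C_{16}$ survive.'' Amitsur's theorem leaves on the table a sizeable list of additional candidates --- $Q_8$, the dicyclic groups $\mathrm{Dic}_{12},\mathrm{Dic}_{16},\mathrm{Dic}_{20},\mathrm{Dic}_{24},\mathrm{Dic}_{32},\mathrm{Dic}_{40},\mathrm{Dic}_{48},\mathrm{Dic}_{60}$, and the binary polyhedral groups $\mathfrak{T}^*,\mathfrak{O}^*,\mathfrak{I}^*$ --- all of which \emph{do} embed in quaternion division algebras over suitable (totally real or CM) quartic fields in general, so neither Amitsur's conditions nor your filter (ii) (existence of a positive involution inverting the group) disposes of them. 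The actual reason they fail, which occupies Lemmas \ref{dic not lem0}--\ref{dic12 not lem} of the paper, is arithmetic and specific to endomorphism algebras of simple abelian fourfolds over finite fields: a (generalized) quaternion $2$-Sylow forces $D_{2,\infty}\subset D$, hence $D=D_{2,\infty}\otimes_{\Q}K$ with $K=\Q(\pi_X)$ a quartic CM field and $p=2$ (since by Proposition \ref{local inv} the algebra is unramified at finite places away from $p$ and at all infinite places of a CM field); one then checks either that the cyclotomic field generated by a large cyclic subgroup forces $K$ to split $D_{2,\infty}$ everywhere, or that the constraints $\pi_X\overline{\pi_X}=q$ together with the prescribed local invariants force $\pi_X$ real, both contradictions. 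A separate period argument kills $D_{2,\infty}\otimes_{\Q}K$ sitting inside a degree-$4$ division algebra over an imaginary quadratic $K$. None of this is present in your proposal, and without it the ``only if'' direction is unproven. Secondarily, your realization step for the cyclic groups ignores maximality (e.g.\ one must choose $D$ so that $C_8$ is not swallowed by $C_{16}$, $C_{24}$, or $C_5\rtimes C_8$ inside $D^\times$, which the paper arranges via the ramification at $97$ and Lemma \ref{embed lem1}), and for $n=2,4,6$ a Weil number inside $\Q(\zeta_n)$ alone cannot produce a simple fourfold --- the paper uses an octic CM field and degree-$4$ division algebras over imaginary quadratic fields for these cases.
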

One notable thing about the result is that many of the groups which appeared in \cite[Theorem 6.5]{8} are not realizable in our current case because of some natural number theoretic reasons. For more details, see Theorem \ref{main thm} below.  \\

As an application of Theorem \ref{main theorem2}, we also compute the Jordan constants of the automorphism groups of simple abelian fourfolds over finite fields. In this regard, another main result of this paper is the following
\begin{theorem}
If $G = \textrm{Aut}_k (X)$ for some simple abelian fourfold $X$ over a finite field $k,$ then $G$ is a Jordan group and its Jordan constant $J_G$ is equal to either $1$ or $2$ or $4.$ 
\end{theorem}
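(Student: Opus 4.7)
The approach reduces the computation to the finite classification in Theorem~\ref{main theorem2}. For $G=\Aut_k(X)$ with $X$ a simple abelian fourfold over a finite field $k$, any finite subgroup $H\le G$ preserves a polarization on $X$, namely the $H$-symmetrization $\bigotimes_{h\in H}h^*\cL_0$ of any polarization $\cL_0$. Hence $H\le \Aut_k(X,\cL)$ for some $\cL$, and by maximality in Theorem~\ref{main theorem2} this embeds into one of the thirteen listed groups, all of order at most $80$. In particular $G$ is Jordan, and $J_G=\sup_H J_H$ as $H$ ranges over finite subgroups.

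It then remains to verify $J_H\in\{1,2,4\}$ for every subgroup $H$ of each of the thirteen groups. Cyclic groups contribute $J=1$. For $C_3\rtimes C_{16}$ the action $C_{16}\to\Aut(C_3)=C_2$ has kernel $C_8$, so $C_3\times C_8\cong C_{24}$ is abelian normal of index $2$; cyclicity of the Sylow $2$-subgroup and uniqueness of the involution then imply the only non-abelian subgroup is the group itself, contributing $J_H\in\{1,2\}$. For $C_5\rtimes C_8$ (faithful action on $C_5$) the kernel of $C_8\to\Aut(C_5)=C_4$ is $C_2$, so $C_5\times C_2\cong C_{10}$ is abelian normal of index $4$, and the unique index-$2$ subgroup $C_5\rtimes C_4$ is non-abelian, forcing $J=4$; this proper subgroup in turn has $C_{10}$ of index $2$, giving $J=2$. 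The group $C_5\rtimes C_{16}$ is handled identically, with $C_2$ and $C_{10}$ replaced by $C_4$ and $C_5\times C_4\cong C_{20}$, yielding $J=4$, and with the non-cyclic proper subgroup $C_5\rtimes C_8$ (of order $40$, action of order $2$) contributing $J=2$. Aggregating, $J_G\in\{1,2,4\}$, with each value realized for suitable $X$ via the existence portion of Theorem~\ref{main theorem2}: $J_G=1$ when $\Aut_k(X,\cL)$ is cyclic, $J_G=2$ when $\Aut_k(X,\cL)\cong C_3\rtimes C_{16}$, and $J_G=4$ when $\Aut_k(X,\cL)\in\{C_5\rtimes C_8,C_5\rtimes C_{16}\}$.

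The main obstacle is the subgroup enumeration in the middle step, in particular ruling out the intermediate value $J=3$. This is ensured by the cyclicity of all Sylow $2$-subgroups of the three non-cyclic groups in Theorem~\ref{main theorem2}, which forbids any Klein four-subgroup and hence any copy of $A_4$ (whose Jordan constant equals $3$); the same cyclicity plus the fact that the only involution is central in each case is what makes the non-abelian subgroup lattices tractable.
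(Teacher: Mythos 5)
Your overall strategy is the same as the paper's: reduce $J_G$ to $\sup_H J_H$ over finite subgroups $H$, observe that every such $H$ is (isomorphic to) a subgroup of one of the thirteen classified groups, and compute the Jordan constant of each case by locating the largest abelian normal subgroup. (The paper reaches the list of finite subgroups directly from the classification of finite subgroups of $D^\times=\textrm{End}_k^0(X)^\times$ rather than via the symmetrized polarization, but these are equivalent here.) However, there is a genuine error in your case analysis: you have the wrong isomorphism type for $C_5\rtimes C_8$. The group occurring in the classification is $G_{20,9}$, i.e.\ $\langle a,b \mid a^{20}=1,\ b^2=a^5,\ bab^{-1}=a^9\rangle$, which is SmallGroup$(40,1)$: here $C_8=\langle b\rangle$ acts on $C_5$ through $C_8/C_4\cong C_2$ (by inversion), and $\langle a\rangle\cong C_{20}$ is an abelian normal subgroup of index $2$, so $J_{C_5\rtimes C_8}=2$, not $4$. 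The group you analyze, with $C_8\to\Aut(C_5)\cong C_4$ surjective and largest abelian normal subgroup $C_{10}$ of index $4$, is SmallGroup$(40,3)=G_{10,3}$; that group fails Amitsur's criterion (Theorem \ref{thm 11}) and does not embed in a division ring at all, so it never arises. Your write-up is in fact internally inconsistent on this point: when treating $C_5\rtimes C_{16}$ you correctly describe its index-$2$ subgroup of order $40$ as having "action of order $2$" with $J=2$, yet that subgroup is exactly the $C_5\rtimes C_8$ of the list.

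The final conclusion $J_G\in\{1,2,4\}$ survives only by luck, since both $2$ and $4$ lie in the claimed set; but your realization statement is wrong as written ($J_G=2$, not $4$, when the maximal group is $C_5\rtimes C_8$; the value $4$ is realized only by $C_5\rtimes C_{16}$). Two smaller remarks: the step "$H\le\textrm{Aut}_k(X,\mathcal{L})$ embeds into one of the thirteen maximal groups" deserves a word of justification (maximality in Definition \ref{def 20} is about realizable groups in the isogeny class, and the a priori bound comes from Corollary \ref{cor 19}, i.e.\ from $H\le D^\times$); and the closing discussion about excluding $J=3$ via Sylow $2$-cyclicity and $A_4$ is not needed once the subgroup lattices are computed directly.
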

For the definition of Jordan groups and Jordan constants, see Definition \ref{Jordan} below.
%\begin{remark}
%Let $X$ be a simple abelian variety of odd prime dimension $g$ over a finite field $k.$ In the case where $\textrm{End}_k^0(X)$ is commutative, then the result in Theorem \ref{main theorem1} is fairly straightforward. On the other hand, if $\textrm{End}_k^0(X)$ is non-commutative, then it is somewhat surprising that we have the same result as in the case of $\textrm{End}_k^0(X)$ being commutative, because not only cyclic groups but also many non-cyclic groups can occur as finite subgroups of the multiplicative subgroup of a non-commutative division algebra (see Theorem \ref{thm 12} below).    
%\end{remark}

This paper is organized as follows: In Section~\ref{prelim}, we introduce several facts which are related to our desired classification. More explicitly, we will recall some facts about endomorphism algebras of simple abelian varieties ($\S$\ref{end alg av}), the theorem~of Tate ($\S$\ref{thm Tate sec}), and Honda-Tate theory ($\S$\ref{thm Honda}). In Section~\ref{findiv}, we find all the finite groups that can be embedded in certain division algebras by a detailed analysis using a paper of Amitsur \cite{1}. In Section~\ref{main}, we obtain the desired classification using the facts that were introduced in the previous sections, together with several auxiliary results that are proved in this section. Finally, in Section~\ref{Jordan sec}, we use our main result to compute the Jordan constants of the automorphism groups of simple abelian fourfolds over finite fields, as an application. \\
\indent In the sequel, let $g \geq 1$ be an integer and let $q=p^a$ for some prime number $p$ and an integer $a \geq 1,$ unless otherwise stated. Also, let $\overline{k}$ denote an algebraic closure of a field $k$. Finally, for an integer $n \geq 1,$ we denote the cyclic group of order $n$ (resp.\ the dihedral group of order $2n$) by $C_n$ (resp.\ $D_n$). 
\section{Preliminaries}\label{prelim}
In this section, we recall some of the facts in the general theory of abelian varieties over a field. Our main references are \cite{3} and \cite{12}.
\subsection{Endomorphism algebras of simple abelian varieties of dimension $4$ over finite fields}\label{end alg av}
In this section, we classify all the possible endomorphism algebras of simple abelian varieties of dimension $4$ over finite fields. To this aim, we first review the general theory on the subject: let $X$ be a simple abelian variety of dimension $g$ over a finite field $k.$ Then it is well known that $\textrm{End}_k^0(X)$ is a division algebra over $\Q$ with~$2g \leq \textrm{dim}_{\Q} \textrm{End}_k^0(X) < (2g)^2.$ Before giving our first result, we also recall Albert's classification. We choose a polarization $\lambda \colon X \rightarrow \widehat{X}$ where $\widehat{X}$ denotes the dual abelian variety of $X$. Using the polarization $\lambda,$ we can define an involution, called the \emph{Rosati involution}, $^{\vee}$ on the algebra $\textrm{End}_k^0(X).$ (For a more detailed discussion about the Rosati involution, see \cite[\S20]{12}.) In this way, to a polarized abelian variety $(X,\lambda)$ we associate the pair $(D, ^{\vee})$ with $D=\textrm{End}_k^0(X)$ and $^{\vee}$, the Rosati involution on $D$. 
%We know that $D$ is a simple division algebra over $\Q$ of finite dimension and that $^{\vee}$ is a positive involution. 
Let $K$ be the center of $D$ so that $D$ is a central simple $K$-algebra, and let $K_0 = \{ x \in K~|~x^{\vee} = x \}$ be the subfield of symmetric elements in $K.$ By a theorem of Albert (see \cite[Application I in \S21]{12}), $D$ (together with $^\vee$) is of one of the following four types: \\
\indent (i) Type I: $K_0 = K=D$ is a totally real field. \\
\indent (ii) Type II: $K_0 = K$ is a totally real field, and $D$ is a quaternion algebra over $K$ with $D\otimes_{K,\sigma} \R \cong M_2(\R)$ for every embedding $\sigma \colon K \hookrightarrow \R.$  \\
\indent (iii) Type III: $K_0 = K$ is a totally real field, and $D$ is a quaternion algebra over $K$ with $D\otimes_{K,\sigma} \R \cong \mathbb{H}$ for every embedding $\sigma \colon K \hookrightarrow \R$ (where $\mathbb{H}$ is the Hamiltonian quaternion algebra over $\R$). \\
\indent (iv) Type IV: $K_0 $ is a totally real field, $K$ is a totally imaginary quadratic field extension of $K_0$, and $D$ is a central simple algebra over $K$. \\
\indent Keeping the notations as above, we let
\begin{equation*}
  e_0= [K_0 :\Q],~~~e=[K:\Q],~~~\textrm{and}~~~d=[D:K]^{\frac{1}{2}}.
\end{equation*}

As our last preliminary fact of this section, we note some numerical~restrictions on those values $e_0, e,$ and $d$ in Table \ref{tab1} above, following \cite[\S21]{12}.
\begin{table}
% table caption is above the table
\caption{Numerical restrictions on endomorphism algebras}
\label{tab1}       % Give a unique label
% For LaTeX tables use
\begin{center}
\begin{tabular}{ccc}
\hline\noalign{\smallskip}
 & $\textrm{char}(k)=0$ & $\textrm{char}(k)=p>0$  \\
\noalign{\smallskip}\hline\noalign{\smallskip}
$\textrm{Type I}$ & $e|g$ & $e|g$ \\
$\textrm{Type II}$ & $2e|g$ & $2e|g$ \\
$\textrm{Type III}$ & $2e|g$ & $e|g$ \\
$\textrm{Type IV}$ & $e_0 d^2 |g$ &$ e_0 d |g $ \\
\noalign{\smallskip}\hline
\end{tabular}
\end{center}
\end{table}

%\begin{center}
  %\begin{tabular}{c c c}
%\hline
%$$ & $\textrm{char}(k)=0$ & $\textrm{char}(k)=p>0$ \\
%\multicolumn{3}{|c|}{Ene}\\
%\hline
%$\textrm{Type I}$ & $e|g$ & $e|g$  \\
%\hline
%$\textrm{Type II}$ & $2e|g$  & $2e|g$ \\
%\hline
%$\textrm{Type III}$ & $2e|g$ & $e|g$ \\
%\hline
%$\textrm{Type IV}$ & $e_0 d^2 |g$ &$ e_0 d |g $ \\
%\hline
%\end{tabular}
%\vskip 4pt
%\textnormal{Table 1 } \\
%\textnormal{Numerical restrictions on endomorphism algebras.}
%\end{center}

Now, we are ready to introduce the following result for the case when $g=4$.
\begin{lemma}\label{poss end alg}
Let $X$ be a simple abelian variety of dimension $4$ over a finite field $k=\F_q$, and let $\lambda \colon X \rightarrow \widehat{X}$ be a polarization. Then $D:=\textrm{End}_k^0(X)$ (together with the Rosati involution $^\vee$ corresponding to $\lambda$) is of one of the following three types: \\
(1) $D$ is a central simple division algebra of degree $4$ over an imaginary quadratic field; \\
(2) $D$ is a central simple division algebra of degree $2$ over a CM-field of degree $4$; \\
(3) $D$ is a CM-field of degree $8$.
\end{lemma}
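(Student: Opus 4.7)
The plan is to combine Tate's theorem for abelian varieties over finite fields (Section \ref{thm Tate sec}) with Albert's four-type classification recalled above. By Tate's theorem, for a simple $X/\F_q$ the algebra $D = \textrm{End}_k^0(X)$ is a division algebra whose center is $K = \Q(\pi)$, where $\pi$ is the $q$-Frobenius endomorphism, and its invariants satisfy the fundamental relation $2g = e \cdot d$. For $g = 4$ this forces $e \cdot d = 8$, so $(e,d) \in \{(1,8),\,(2,4),\,(4,2),\,(8,1)\}$.

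Next I would eliminate Albert Types I, II, and III. In each of these three types the center $K = K_0$ is required to be totally real. But $K = \Q(\pi)$ with $\pi$ a Weil $q$-number, and if $K$ is totally real then every Galois conjugate of $\pi$ is simultaneously real and of absolute value $\sqrt{q}$, which forces $\pi = \pm\sqrt{q}$ and hence $e = [K:\Q] \le 2$. In Type I one has $d=1$ and so $e = 8$, and in Types II and III one has $d=2$ and so $e = 4$; each of these violates the bound $e \le 2$, so Types I--III cannot occur. (For Type I one can reach the same conclusion more cheaply by invoking the divisibility $e \mid g = 4$ from Table \ref{tab1}.)

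This leaves Type IV, in which $K$ is a CM-field of even degree $e = 2e_0$ over $\Q$. Evenness of $e$ immediately excludes $(e,d) = (1,8)$, leaving exactly the three possibilities $(e,d) \in \{(2,4),\,(4,2),\,(8,1)\}$, which correspond respectively to cases (1), (2), and (3) in the statement. As a final sanity check, the characteristic-$p$ numerical restriction $e_0 d \mid g$ from Table \ref{tab1} reads $4 \mid 4$ in all three cases and is therefore automatically satisfied. The only real hurdle is the initial invocation of Tate's theorem in the form $2g = ed$; once that and Albert's list are in hand, the rest is mechanical bookkeeping on the pairs $(e,d)$ with product $8$.
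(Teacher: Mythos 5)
Your proof is correct and follows essentially the same route as the paper: Albert's classification combined with the relation $de=2g$ from Tate's theorem and the observation that a totally real $\Q(\pi_X)$ forces $e\le 2$. The only cosmetic difference is that you eliminate Types I--III uniformly via the totally-real-center argument and enumerate Type IV directly from $ed=8$, whereas the paper first invokes the CM-type property to discard Types I and II and then filters the Type IV pairs through the restriction $e_0 d \mid g$; the underlying computations are identical.
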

\begin{proof}
We recall that $X$ is of CM-type (see Corollary \ref{cor TateEnd0}-(c) below), and hence, either $D$ is of Type III or Type IV in Albert's classification. \\
\indent (i) First, we show that $D$ is not of Type III. To this aim, suppose that $D$ is of Type III. By Table \ref{tab1}, we know that $e|g.$ If $e=1,$ then $\textrm{dim}_{\Q} D =4,$ which contradicts the fact that $8 \leq \textrm{dim}_{\Q} D.$ If $e=2$ (resp.\ $e=4$), then $D$ is a quaternion algebra over a totally real field $K=\Q(\pi_X)$ of degree $2$ (resp.\ of degree $4$) where $\pi_X$ denotes the Frobenius endomorphism of $X.$ Since $\pi_X$ is a $q$-Weil number, we have $|\pi_X |=\sqrt{q}$ so that $[K : \Q] \leq 2,$ and hence, we only need to consider the case when $e=2.$ In this case, it follows from Corollary \ref{cor TateEnd0}-(b) that $\dim X =2,$ which is a contradiction. Hence, we can conclude that $D$ cannot be of Type III. \\
\indent (ii) As a result of (i), we may assume that $D$ is of Type IV. By Table \ref{tab1}, the possible pairs $(e_0, d)$ are contained in the following set:
\begin{equation*}
\{(1,1), (1,2), (2,1), (1,4), (2,2), (4, 1)\}
\end{equation*}  
From the above list, we exclude the pairs $(1,1)$, $(2,1),$ and $(1,2),$ as follows: if $e_0=d=1$ (resp.\ $e_0=2, d=1$), then $D$ is an imaginary quadratic field (resp.\ a CM-field of degree $4$), and hence, we have $\dim_{\Q}D \leq 4$ in both cases, which contradicts the fact that $8 \leq \dim_{\Q}D.$ If $e_0=1, d=2,$ then we have $\dim X=2$ by Corollary \ref{cor TateEnd0}-(b), which is also a contradiction.
Now, the desired result follows from Albert's classification.  \\
\indent This completes the proof.
\end{proof}

\begin{remark}\label{odd prime holds}
There is a similar result for the case when $X$ is a simple abelian variety of dimension $g$ for a prime $g \geq 3$ over a finite field $k$ (see \cite[Lemma 2.1]{9}). 
\end{remark}
%For our later use, we also recall the following fact:

%\begin{remark}\label{oort rmk}
%Let $K$ be an algebraically closed field with $\textrm{char}(K)=p>0.$ Let $X$ be a simple abelian variety of dimension $g$ over $K$, and let $D=\textrm{End}_K^0(X)$. Then $D$ is of one of the following types (see \cite[$\S7$]{14}): \\
%(i) $D=\Q$;\\
%(ii) $D$ is a totally real field of degree $g$;\\
%(iii) $D=D_{p,\infty}$ if $g \geq 5$, where $D_{p,\infty}$ is the unique quaternion algebra over $\Q$ that is ramified at $p$ and $\infty$, and split at all other primes; \\
%(iv) $D$ is an imaginary quadratic field;\\
%(v) $D$ is a totally imaginary quadratic extension field of a totally real field of degree $g$; \\
%(vi) $D$ is a central simple division algebra of degree $g$ over an imaginary quadratic field and the $p$-rank of $X$ is $0.$
%\end{remark}

\subsection{The theorem of Tate}\label{thm Tate sec}
In this section, we recall an important theorem of Tate, and give some of its interesting consequences. \\
\indent Let $k$ be a field and let $l$ be a prime with $l \ne \textrm{char}(k)$. If $X$ is an abelian variety of dimension $g$ over $k,$ then we can introduce the Tate $l$-module $T_l X$ and the corresponding $\Q_l$-vector space $V_l X :=T_l X \otimes_{\Z_l} \Q_l.$ It is well known that $T_l X$ is a free $\Z_l$-module of rank $2g$ and $V_l X$ is a $2g$-dimensional $\Q_l$-vector space. In \cite{16}, Tate showed the following important result.

\begin{theorem}\label{thm Tate}
Let $k$ be a finite field and let $\Gamma = \textrm{Gal}(\overline{k}/k).$ If $l$ is a prime with $l \ne \textrm{char}(k),$ then we have: \\
(a) For any abelian variety $X$ over $k,$ the representation
\begin{equation*}
 \rho_l =\rho_{l,X} \colon \Gamma \rightarrow \textrm{GL}(V_l X)
\end{equation*}
is semisimple. \\
(b) For any two abelian varieties $X$ and $Y$ over $k,$ the map
\begin{equation*}
 \Z_l \otimes_{\Z} \textrm{Hom}_k(X,Y) \rightarrow \textrm{Hom}_{\Gamma}(T_l X, T_l Y)
\end{equation*}
is an isomorphism.
\end{theorem}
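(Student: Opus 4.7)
The plan is to follow Tate's original argument from \cite{16}, which hinges on a finiteness principle for polarized abelian varieties over a finite field. First I would dispose of the injectivity in (b), which is classical and does not use that $k$ is finite: a homomorphism $f \colon X \to Y$ with $T_l f = 0$ is divisible by $l^n$ inside $\Hom_k(X,Y)$ for every $n$, and since $\Hom_k(X,Y)$ is a finitely generated torsion-free $\Z$-module, this forces $f=0$. So from here on I would focus on surjectivity in (b) and on (a).

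The central input I would establish is the following finiteness statement: for each integer $d\ge 1$, the set of $k$-isomorphism classes of polarized abelian varieties $(Y,\mu)$ of a fixed dimension $g$ over $k$ with $\deg \mu = d$ is finite. This follows by combining two facts: (i) the set of $q$-Weil numbers of bounded absolute value is finite, so by Honda--Tate (see \S\ref{thm Honda}) the $k$-isogeny class of $X$ contains only finitely many $Y$ up to isogeny; and (ii) inside a fixed isogeny class, a polarization of bounded degree cuts out only finitely many isomorphism classes, since the set of suitable lattices in $V_l X$ modulo $\End^0_k(X)^\times$ is finite.

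With this in hand, the argument for surjectivity in (b) proceeds by the classical lattice trick. Given $\alpha \in \End_\Gamma(T_l X)$, consider the sequence of $\Gamma$-stable $\Z_l$-lattices
\begin{equation*}
 M_n = T_l X + l^{-n}\,(\alpha - a_n)(T_l X) \subset V_l X
\end{equation*}
for well-chosen integers $a_n$. Each $M_n$ contains $T_l X$ with index a bounded power of $l$, hence (via the $X[l^n]$-dictionary) corresponds to an isogeny $\varphi_n \colon X \to X_n$ of bounded degree whose dual composes with $\varphi_n$ to $l^n \cdot \mathrm{id}$. Pulling back a fixed polarization along $\varphi_n$ produces polarized abelian varieties $(X_n,\mu_n)$ whose degrees are uniformly bounded. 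The finiteness principle above then forces $X_n \cong X_m$ for some $n > m$, and transporting this isomorphism back through $\varphi_m$ and $\varphi_n$ yields an element of $\End_k(X)$ congruent to $\alpha$ modulo $l^{n-m}$. Passing to an $l$-adic limit gives surjectivity.

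For the semisimplicity in (a), I would take any $\Gamma$-stable $\Q_l$-subspace $W \subset V_l X$, intersect with $T_l X$ to get a $\Gamma$-stable $\Z_l$-lattice, and apply the same dictionary to obtain an abelian subvariety $Y \subset X$ (up to isogeny) with $V_l Y = W$; Poincaré's complete reducibility theorem then furnishes a complementary abelian subvariety, whose rational Tate module is the desired $\Gamma$-stable complement of $W$. The main obstacle in the whole program is controlling the degrees of the polarizations $\mu_n$ constructed in the surjectivity step uniformly in $n$, so that the finiteness principle can be invoked; this is what makes the finiteness of the ground field essential, as infinite base fields admit infinitely many polarized isomorphism classes of bounded degree in an isogeny class.
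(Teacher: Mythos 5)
The paper itself offers no proof of this theorem: it is quoted verbatim from Tate's article \cite{16}, so your proposal must be judged against Tate's original argument, whose architecture (injectivity by $l$-divisibility, a finiteness principle for polarized abelian varieties over a finite field, the lattice--quotient construction, and reduction of semisimplicity to realizing $\Gamma$-stable subspaces by endomorphisms) you have correctly identified. Two of your steps, however, contain genuine gaps.

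First, your derivation of the finiteness principle is circular. The injectivity half of Honda--Tate (Theorem \ref{thm HondaTata}) --- that conjugate $q$-Weil numbers force isogenous varieties --- is itself a corollary of the very theorem being proved, via $\Z_l\otimes\textrm{Hom}_k(X,Y)\cong \textrm{Hom}_{\Gamma}(T_lX,T_lY)$ together with semisimplicity; and your step (ii), parametrizing the members of an isogeny class by $\Gamma$-stable lattices in $V_lX$ modulo $\textrm{End}_k^0(X)^{\times}$, is Waterhouse's classification, which again presupposes Tate's isomorphism. The finiteness statement must be obtained independently: a polarization of degree $d^2$ yields (after passing to its third power) a projective embedding of $Y$ into a projective space of dimension and image-degree bounded in terms of $g$ and $d$ alone, so such $(Y,\mu)$ form a family of finite type over $k$, and over a finite field such a family has only finitely many members.

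Second, the step you yourself flag as ``the main obstacle'' is a real gap rather than a technicality. If $\varphi_n$ has degree $N_n=[M_n:T_lX]$, which tends to infinity with $n$, then transporting a fixed polarization $\lambda$ across $\varphi_n$ multiplies its degree by $N_n^2$, so the degrees of your $\mu_n$ are \emph{not} uniformly bounded and the finiteness principle cannot be invoked as stated. Tate's resolution is to reduce first to $\Gamma$-stable subspaces $W$ that are maximal isotropic for the Weil pairing $e^{\lambda}$ attached to a fixed polarization (a reduction achieved by replacing $X$ with $X\times X$ and working with a product subspace); for such $W$ the restriction of $e^{\lambda}$ to the lattice $(T_lX\cap W)+l^nT_lX$ is divisible by $l^{n}$, and dividing the transported polarization by the appropriate power of $l$ returns one to a polarization of degree $\deg\lambda$ on each $X_n$, independent of $n$. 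Without this isotropy reduction the counting argument does not close. (Your treatment of injectivity, and your deduction of part (a) from the subspace-realization lemma via an idempotent of $\textrm{End}_k(X)\otimes\Q_l$ and Poincar\'e reducibility, are sound granting that lemma.)
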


Now, we recall that an abelian variety $X$ over a (finite) field $k$ is called \emph{elementary} if $X$ is $k$-isogenous to a power of a simple abelian variety over $k.$ Then, as an interesting consequence of Theorem \ref{thm Tate}, we have the following
\begin{corollary}[{\cite[Theorem 2]{16}}]\label{cor TateEnd0}
  Let $X$ be an abelian variety of dimension $g$ over a finite field $k.$ Then we have:\\
  (a) The center of the endomorphism algebra $\textrm{End}_k^0(X)$ is the subalgebra $\Q[\pi_X]$ where $\pi_X$ denotes the Frobenius endomorphism of $X.$ In particular, $X$ is elementary if and only if $\Q[\pi_X]=\Q(\pi_X)$ is a field. \\
 (b) Suppose that $X$ is elementary. Let $h=f_{\Q}^{\pi_X}$ be the minimal polynomial of $\pi_X$ over $\Q$, and let $f_X$ denote the characteristic polynomial of $\pi_X$. Further, let $d=[\textrm{End}_k^0(X):\Q(\pi_X)]^{\frac{1}{2}}$ and $e=[\Q(\pi_X):\Q].$ Then we have $de =2g$ and $f_X = h^d.$ \\
(c) We have $2g \leq \textrm{dim}_{\Q} \textrm{End}^0_k (X) \leq (2g)^2$ and $X$ is of CM-type. \\
%(d) The following conditions are equivalent: \\
  %\indent (d-1) $\textrm{dim}_{\Q} \textrm{End}_k^0(X)=2g$; \\
  %\indent (d-2) $\textrm{End}_k^0(X)=\Q[\pi_X]$; \\
  %\indent (d-3) $\textrm{End}_k^0(X)$ is commutative; \\
  %\indent (d-4) $f_X$ has no multiple root. \\
%(e) The following conditions are equivalent: \\
  %\indent (e-1) $\textrm{dim}_{\Q} \textrm{End}_k^0(X)=(2g)^2$; \\
  %\indent (e-2) $\Q[\pi_X]=\Q$; \\
  %\indent (e-3) $f_X$ is a power of a linear polynomial; \\
  %\indent (e-4) $\textrm{End}^0_k(X) \cong M_g(D_{p,\infty})$ where $D_{p,\infty}$ is the unique quaternion algebra over $\Q$ that is ramified at $p$ and $\infty$, and split at all other primes\footnote{In the sequel, $D_{p,\infty}$ always denotes this quaternion algebra over $\Q$.}; \\
  %\indent (e-5) $X$ is supersingular with $\textrm{End}_k(X) = \textrm{End}_{\overline{k}}(X_{\overline{k}})$ where $X_{\overline{k}}=X \times_k \overline{k}$; \\
  %\indent (e-6) $X$ is isogenous to $E^g$ for a supersingular elliptic curve $E$ over $k$ all of whose endomorphisms are defined over $k.$
\end{corollary}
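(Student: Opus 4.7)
The approach is to leverage Theorem~\ref{thm Tate} to translate questions about $\End_k^0(X)$ into questions about the Frobenius action on $V_l X$, and then read off the algebraic consequences through dimension counting and the double-centralizer principle.

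For (a), the plan is to start from the isomorphism $\End_k^0(X) \otimes_\Q \Q_l \cong \End_\Gamma(V_l X)$ obtained by tensoring Theorem~\ref{thm Tate}(b) with $\Q_l$. Because $\Gamma$ is topologically generated by the Frobenius and $\pi_X$ acts as this Frobenius, $\End_\Gamma(V_l X)$ is exactly the commutant of $\pi_X$ in $\End_{\Q_l}(V_l X)$, and the semisimplicity of this operator (Theorem~\ref{thm Tate}(a)) forces the center of this commutant to be the subalgebra $\Q_l[\pi_X]$. Faithful flatness of $\Q \to \Q_l$ then descends this to show that the center of $\End_k^0(X)$ is $\Q[\pi_X]$. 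The characterization of \emph{elementary} follows once I observe, via the isotypic decomposition $X \sim Y^m$, that $\End_k^0(X)$ is a simple $\Q$-algebra iff $X$ is elementary, which in turn is equivalent to its center $\Q[\pi_X]$ being a field.

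For (b), semisimplicity of $\pi_X$ forces $h$ and $f_X$ to share their roots; since $h$ is irreducible (and separable, as $\pi_X$ is a $q$-Weil number) this gives $f_X = h^r$ for some integer $r$, and comparing degrees yields $re = 2g$. To identify $r$ with $d$, I would compute $\dim_\Q \End_k^0(X)$ two ways: by the definitions of $d$ and $e$ it equals $d^2 e$, while after base change to $\overline{\Q}_l$ the space $V_l X \otimes \overline{\Q}_l$ decomposes as $e$ eigenspaces of $\pi_X$ (one per root of $h$), each of dimension $r$, so the commutant has $\overline{\Q}_l$-dimension $e r^2$. Equating via Theorem~\ref{thm Tate}(b) gives $d = r$, proving both claims. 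For (c), the upper bound $\dim_\Q \End_k^0(X) \leq (2g)^2$ is immediate from the embedding $\End_k^0(X) \otimes_\Q \Q_l \hookrightarrow \End_{\Q_l}(V_l X) = M_{2g}(\Q_l)$. For the lower bound, I would decompose $X \sim \prod Y_i^{m_i}$ into isotypic components and apply (b) to each simple $Y_i$, so that with $d_i e_i = 2\dim Y_i$ one gets $\dim_\Q \End_k^0(X) = \sum m_i^2 d_i^2 e_i \geq \sum 2 m_i \dim Y_i = 2g$. To establish the CM property I would produce a commutative semisimple $\Q$-subalgebra of $\End_k^0(X)$ of dimension exactly $2g$ by choosing, in each factor $M_{m_i}(D_i) = \End_k^0(Y_i^{m_i})$, a maximal étale subalgebra $L_i^{m_i}$, where $L_i \subset D_i$ is a maximal subfield (of $\Q$-degree $d_i e_i = 2\dim Y_i$); summing dimensions gives $\sum m_i \cdot 2\dim Y_i = 2g$.

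The main obstacle is the bookkeeping in (b): ensuring that the eigenspace decomposition over $\overline{\Q}_l$ really yields equal multiplicities $r$ uses both the separability of $h$ and the semisimplicity of the Frobenius representation, and the identification of the commutant dimension with $\dim_\Q \End_k^0(X)$ crucially requires Tate's isomorphism rather than a mere inclusion. Everything else reduces to Albert-type dimension counting once the Tate-based input is in place.
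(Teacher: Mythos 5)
The paper gives no proof of this corollary; it is quoted directly from Tate's paper (\cite[Theorem 2]{16}), so there is nothing internal to compare against. Your reconstruction is correct and is essentially Tate's own argument: the bicommutant identification of the center in (a), the two-sided dimension count $d^2e = er^2$ against the eigenspace decomposition in (b), and the maximal-\'etale-subalgebra construction for the CM statement in (c) are all sound. The only point worth flagging is that your step ``$f_X$ and $h$ share roots, $h$ irreducible, hence $f_X=h^r$'' silently uses that $f_X$ has rational coefficients (independent of $l$); this is standard Weil theory and presupposed by the statement itself, but it is the rationality of $f_X$, not just the semisimplicity of $\pi_X$, that forces the multiplicities of the roots to be equal.
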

%\begin{proof}
  %For a proof, see \cite[Theorem 2]{16}.
%\end{proof}

For a precise description on the structure of the endomorphism algebra of a simple abelian variety $X$, viewed as a simple algebra over its center $\Q[\pi_X]$, we record the following result.
\begin{proposition}[{\cite[Corollary 16.30 and Corollary 16.32]{3}}]\label{local inv}
  Let $X$ be a simple abelian variety over a finite field $k=\F_q$ and let $K=\Q[\pi_X]$. Then we have: \\
(a) If $\nu$ is a place of $K$, then the local invariant of $\textrm{End}_k^0(X)$ in the Brauer group $\textrm{Br}(K_{\nu})$ is given by
  \begin{equation*}
    \textrm{inv}_{\nu}(\textrm{End}_k^0(X))=\begin{cases} 0 & \mbox{if $\nu$ is a finite place not above $p$}; \\ \frac{\textrm{ord}_{\nu}(\pi_X)}{\textrm{ord}_{\nu}(q)} \cdot [K_{\nu}:\Q_p] & \mbox{if $\nu$ is a place above $p$}; \\ \frac{1}{2} & \mbox{if $\nu$ is a real place of $K$}; \\ 0 & \mbox{if $\nu$ is a complex place of $K$}. \end{cases}
  \end{equation*}
(b) If $d$ is the degree of the division algebra $D:=\textrm{End}_k^0(X)$ over its center $K$ (so that $d=[D:K]^{\frac{1}{2}}$ and $f_X = (f_{\Q}^{\pi_X})^d$), then $d$ is the least common denominator of the local invariants $\textrm{inv}_{\nu}(D).$
\end{proposition}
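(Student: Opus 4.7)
The plan is to analyze the local invariants place by place, using the appropriate cohomological or module-theoretic model of $X$ at each type of place, and then to deduce part (b) from classical Brauer group theory. Throughout, set $D=\operatorname{End}_k^0(X)$, $K=\mathbb{Q}[\pi_X]$, and recall that $D$ is a central division algebra over $K$ by simplicity of $X$.

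For a finite place $\nu$ of $K$ not above $p$, I would invoke Theorem~\ref{thm Tate}. Pick a rational prime $\ell$ with $\nu\mid\ell$; then $V_\ell X$ is a free module over $D\otimes_{\mathbb{Q}}\mathbb{Q}_\ell$ and, by Tate's theorem, $D\otimes_{\mathbb{Q}}\mathbb{Q}_\ell$ is precisely the commutant of the Galois action, which factors through $K\otimes_{\mathbb{Q}}\mathbb{Q}_\ell=\prod_{\nu\mid\ell}K_\nu$. Decomposing $V_\ell X$ accordingly, each factor $D\otimes_K K_\nu$ is a commutant of a field acting faithfully on a finite-dimensional vector space and is therefore a matrix algebra over $K_\nu$. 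Hence $\operatorname{inv}_\nu(D)=0$. For the archimedean places the argument is even cleaner: at a complex place we have $D\otimes_K K_\nu\cong M_n(\mathbb{C})$ since $\operatorname{Br}(\mathbb{C})=0$, while at a real place $\nu$ the Weil condition $|\pi_X|_\nu=\sqrt{q}$ forces $\pi_X=\pm\sqrt{q}$, the Rosati involution is of the first kind at $\nu$ and positive, and the classification of positive involutions on $D\otimes_K\mathbb{R}$ forces $D\otimes_K\mathbb{R}\cong\mathbb{H}$, yielding invariant $\tfrac12$.

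The substantive case, and the one I expect to be the main obstacle, is $\nu\mid p$. Here I would replace the $\ell$-adic Tate module with the (rational) Dieudonné module $M(X)\otimes\mathbb{Q}$, a free module of rank $2g$ over $W(\overline{\mathbb{F}_q})\otimes\mathbb{Q}$ equipped with a $\sigma$-linear Frobenius $F$ whose $a$-th iterate acts as $\pi_X$. The analogue of Tate's theorem in this setting (due to Tate, with details supplied by Waterhouse and later authors) identifies $D\otimes_{\mathbb{Q}}\mathbb{Q}_p$ with the commutant of this action, and the Dieudonné--Manin classification decomposes $M\otimes\mathbb{Q}$ into isotypic slope pieces indexed by the places $\nu\mid p$ of $K$. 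On the $\nu$-piece the slope is $\operatorname{ord}_\nu(\pi_X)/\operatorname{ord}_\nu(q)$, and the local invariant is this slope multiplied by the local degree $[K_\nu:\mathbb{Q}_p]$, reduced modulo $\mathbb{Z}$; this step is where the machinery of $p$-divisible groups actually enters and where all the nontrivial ramification sits.

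Finally, part (b) is a purely algebraic consequence of (a). By the Albert--Brauer--Hasse--Noether theorem, the Schur index of a central simple algebra over a number field equals the least common multiple of its local Schur indices, and the local Schur index at $\nu$ is precisely the denominator of $\operatorname{inv}_\nu(D)\in\mathbb{Q}/\mathbb{Z}$ in lowest terms. Since $D$ is a division algebra, its degree $d$ coincides with its Schur index, and the identity $f_X=(f_{\mathbb{Q}}^{\pi_X})^d$ from Corollary~\ref{cor TateEnd0}(b) gives the stated compatibility with characteristic polynomials.
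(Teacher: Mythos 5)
The paper does not prove this proposition; it imports it wholesale from \cite[Corollary 16.30 and Corollary 16.32]{3}, so there is no in-paper argument to measure you against. Your sketch follows the standard route of that reference: Tate's theorem (Theorem \ref{thm Tate}) gives the vanishing at finite places away from $p$ via the commutant of $K\otimes_{\Q}\Q_\ell$ on $V_\ell X$, the rational Dieudonn\'e module and the slope decomposition give the places above $p$, and the Albert--Brauer--Hasse--Noether theorem (index $=$ l.c.m.\ of local indices $=$ least common denominator of local invariants, and index $=$ degree for a division algebra) gives (b) from (a). That is the right architecture.

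Two caveats. First, your real-place step is wrong as literally written: positivity of the Rosati involution does \emph{not} force $D\otimes_K K_\nu\cong\mathbb{H}$, since $M_n(\R)$ with the transpose involution is also positive (this is exactly Albert Type II versus Type III). What actually rules out the split case is that a real place forces $\pi_X=\pm\sqrt{q}$, hence $K$ totally real and the involution of the first kind, and then the fact that $X$ is of CM-type (Corollary \ref{cor TateEnd0}(c): $\textrm{End}_k^0(X)$ contains a commutative semisimple subalgebra of $\Q$-dimension $2g$) together with the numerical restrictions of Table \ref{tab1} excludes Types I and II, leaving Type III, whose completions at real places are $\mathbb{H}$. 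Second, your paragraph on $\nu\mid p$ is a statement of what must be proved rather than a proof: the identification of $D\otimes_{\Q}\Q_p$ with the commutant of Frobenius on the isocrystal is the $p$-adic analogue of Tate's theorem and is itself a nontrivial theorem, and reading off $\frac{\textrm{ord}_{\nu}(\pi_X)}{\textrm{ord}_{\nu}(q)}\cdot[K_\nu:\Q_p]$ requires computing the endomorphism algebra of a simple isocrystal of prescribed slope. Citing these inputs is legitimate in a sketch, but you should be aware that at the places above $p$ you are citing essentially the entire content of the statement.
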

%\begin{proof}
  %(a) For a proof, see \cite[Corollary 16.30]{3}. \\
  %(b) For a proof, see \cite[Corollary 16.32]{3}.
%\end{proof}

\subsection{Abelian varieties up to isogeny and Weil numbers: Honda-Tate theory}\label{thm Honda}
In this section, we recall an important theorem of Honda and Tate. To achieve our goal, we first give the following
\begin{definition}\label{qWeil Def}
  (a) A \emph{$q$-Weil number} is an algebraic integer $\pi$ such that $| \iota(\pi) | = \sqrt{q}$ for all embeddings $\iota \colon \Q[\pi] \hookrightarrow \C.$ \\
  (b) Two $q$-Weil numbers $\pi$ and $\pi^{\prime}$ are said to be \emph{conjugate} if they have the same minimal polynomial over $\Q.$
\end{definition}

Now, we introduce the main result of this section, which allows us to relate $q$-Weil numbers to simple abelian varieties over $\F_q$.

\begin{theorem}[{\cite[Main Theorem]{6}} or {\cite[\S16.5]{3}}]\label{thm HondaTata}
 For every $q$-Weil number $\pi$, there exists a simple abelian variety $X$ over $\F_q$ such that $\pi_X$ is conjugate to $\pi$. Moreover, we have a bijection between the set of isogeny classes of simple abelian varieties over $\F_q$ and the set of conjugacy classes of $q$-Weil numbers given by $X \mapsto \pi_X$.
\end{theorem}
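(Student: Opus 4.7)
The strategy is to prove the two directions of the bijection separately. Well-definedness of $X \mapsto \pi_X$ is the content of the Weil bounds: for any abelian variety $X$ over $\F_q$, all eigenvalues of $\pi_X$ on $V_l X$ have absolute value $\sqrt q$, so $\pi_X$ is indeed a $q$-Weil number. Injectivity drops out of Theorem~\ref{thm Tate}, while surjectivity is the substantive content contributed by Honda and requires constructing abelian varieties from Weil numbers via complex multiplication.

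For injectivity, suppose $X$ and $Y$ are simple over $\F_q$ with $\pi_X, \pi_Y$ conjugate, so $f_\Q^{\pi_X} = f_\Q^{\pi_Y}$. By Theorem~\ref{thm Tate}(b), $\textrm{Hom}_k(X,Y) \otimes_\Z \Q_l \cong \textrm{Hom}_\Gamma(V_l X, V_l Y)$ for any $l \ne p$; by Theorem~\ref{thm Tate}(a) both $V_l X$ and $V_l Y$ are semisimple $\Gamma$-modules on which Frobenius acts with the same minimal polynomial, so their isotypic decompositions share at least one common irreducible summand. Hence $\textrm{Hom}_\Gamma(V_l X, V_l Y) \ne 0$, and so $\textrm{Hom}_k(X,Y) \ne 0$; since $X, Y$ are simple, they must be $k$-isogenous.

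For surjectivity, given a $q$-Weil number $\pi$, set $K = \Q(\pi)$. Since $\pi \cdot \bar\pi = q \in \Q$, the field $K$ is either totally real (which forces $\pi = \pm\sqrt q$, the supersingular case, handled separately via Deuring's theory of supersingular elliptic curves together with suitable twists) or a CM-field with $K_0 = \Q(\pi + q/\pi)$ its maximal totally real subfield. In the CM-case, I would select a CM-type $\Phi$ on $K$, invoke the classical Shimura--Taniyama theory to construct a complex abelian variety with CM by (an order in) $K$ of type $(K,\Phi)$, descend it to a number field $L$ with good reduction at some prime $\mathfrak P \mid p$, and reduce modulo $\mathfrak P$. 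The Shimura--Taniyama formula then expresses the Frobenius of the reduction as an element $\pi' \in K$ which, after replacing $\F_q$ by an appropriate finite extension, is a power of $\pi$. Taking the simple isogeny factor whose Frobenius equals this power of $\pi$, and applying Weil restriction followed by Galois descent down to $\F_q$, yields the required simple abelian variety.

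The main obstacle lies in the last step: arranging the construction so that the Shimura--Taniyama formula outputs exactly $\pi$, rather than merely some Galois conjugate or proper power. The combinatorial choice of CM-type $\Phi$ must be matched with the arithmetic of the prime $\mathfrak P$ via a careful Brauer-group calculation (essentially Proposition~\ref{local inv}), and performing the final descent from $\F_{q^n}$ back down to $\F_q$ without losing the Frobenius one has just realized is the technical heart of Honda's original argument.
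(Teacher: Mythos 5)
The paper does not prove this theorem; it is quoted as the Honda--Tate theorem with references to Honda's paper and to Edixhoven--van der Geer--Moonen, so there is no internal proof to compare against. Judged on its own, your outline is the standard architecture and is correct as far as it goes. Well-definedness is Weil's Riemann hypothesis. Your injectivity argument is exactly the usual deduction from Tate's theorem: for simple $X$ the characteristic polynomial of Frobenius on $V_l X$ is a power of the irreducible polynomial $f_\Q^{\pi_X}$, so if $f_\Q^{\pi_X}=f_\Q^{\pi_Y}$ then the semisimple $\Gamma$-modules $V_l X$ and $V_l Y$ are built from the same irreducible constituents (an irreducible $\Gamma$-module over $\Q_l$ is determined by the characteristic polynomial of the topological generator), whence $\Hom_\Gamma(V_l X, V_l Y)\neq 0$, whence $\Hom_k(X,Y)\neq 0$ by Tate's isomorphism, and a nonzero map between simple abelian varieties is an isogeny. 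This is complete.

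The surjectivity direction, as you concede, is a roadmap rather than a proof. The two steps that carry all the difficulty are left unexecuted: (i) the choice of a CM field $L\supseteq\Q(\pi)$ and a CM type $\Phi$ on $L$ for which the Shimura--Taniyama formula reproduces, at every place $v\mid p$, the slope $\operatorname{ord}_v(\pi)/\operatorname{ord}_v(q)$ prescribed by $\pi$ (this is where the local-invariant bookkeeping of Proposition~\ref{local inv} type enters, and it requires enlarging $\Q(\pi)$ to a suitable $L$, not merely picking a CM type on $\Q(\pi)$, which need not even be CM); and (ii) Tate's lemma that effectivity of $\pi^N$ over $\F_{q^N}$ implies effectivity of $\pi$ over $\F_q$, proved by exhibiting a simple isogeny factor of $\mathrm{Res}_{\F_{q^N}/\F_q}Y$ whose Frobenius is conjugate to $\pi$ --- ``Galois descent'' is not quite the right description, since nothing is descended; one takes a factor of the restriction of scalars. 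Your treatment of the totally real case is also slightly compressed: $\pi=\pm p^{a/2}\in\Q$ is the supersingular elliptic curve case, but $\pi=\sqrt q$ with $a$ odd yields a simple abelian \emph{surface} with quaternionic endomorphism algebra over $\Q(\sqrt p)$, and is most cleanly handled by the same $\pi^2$-to-$\pi$ descent. Since the paper itself uses the theorem as a black box, these gaps are gaps in a proof you were not required to supply in full; but they should not be mistaken for routine.
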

The inverse of the map $X \mapsto \pi_X$ associates to a $q$-Weil number $\pi$ a simple abelian variety $X$ over $\F_q$ such that $f_X$ is a power of the minimal polynomial $f_{\Q}^{\pi}$ of $\pi$ over $\Q.$

\section{Finite subgroups of division algebras}\label{findiv}
Recall that if $X$ is a simple abelian variety of dimension $4$ over a finite field $k$, then $\textrm{End}_k^0(X)$ is a CM-field of degree $8$ over $\Q$ or a central simple division algebra of degree $2$ (resp.\ $4$) over a quartic CM-field (resp.\ an imaginary quadratic field) by Lemma \ref{poss end alg}. Hence, in this section, we give a classification of all possible finite groups that can be embedded in the multiplicative subgroup of a division algebra with certain properties that are related to our situation. Our main reference is a paper of Amitsur~\cite{1}. We start with the following

\begin{definition}\label{def 9}
  Let $m, r$ be two relatively prime positive integers, and we put $s:=\gcd(r-1, m)$ and $t:=\frac{m}{s}.$ Also, let $n$ be the smallest integer such that $r^n \equiv 1~(\textrm{mod}~m).$ We denote by $G_{m,r}$ the group generated by two elements $a,b$ satisfying the relations
  \begin{equation*}
    a^m=1,~b^n=a^t,~ bab^{-1}=a^r.
  \end{equation*}
  This type of groups includes the \emph{dicyclic group} of order $mn,$ in which case, we often write $\textrm{Dic}_{mn}$ for $G_{m,r}$. As a convention, if $r=1,$ then we put $n=s=1,$ and hence, $G_{m,1}$ is a cyclic group of order $m.$
\end{definition}

  Given $m,r,s,t,n,$ as above, we will consider the following two conditions in the sequel: \\
  (C1) $\gcd(n,t)=\gcd(s,t)=1.$ \\
  (C2) $n=2n^{\prime}, m=2^{\alpha} m^{\prime}, s=2 s^{\prime}$ where $\alpha \geq 2,$ and $n^{\prime}, m^{\prime}, s^{\prime}$ are all odd integers. Moreover, $\gcd(n,t)=\gcd(s,t)=2$ and $r \equiv -1~(\textrm{mod}~2^{\alpha}).$ \\
\indent Now, let $p$ be a prime number that divides $m.$ We define: \\
(i) $\alpha_p$ is the largest integer such that $p^{\alpha_p}~|~m.$ \\
(ii) $n_p$ is the smallest integer satisfying $r^{n_p} \equiv 1~(\textrm{mod}~mp^{-\alpha_p}).$ \\
(iii) $\delta_p$ is the smallest integer satisfying $p^{\delta_p} \equiv 1~(\textrm{mod}~mp^{-\alpha_p}).$\\
\indent Then we have the following result that provides us with a useful criterion for a group $G_{m,r}$ to be embedded in a division ring:
\begin{theorem}[{\cite[Theorems 3, 4 and Lemma 10]{1}}]\label{thm 11}
  A group $G_{m,r}$ can be embedded in a division ring if and only if either (C1) or (C2) holds, and one of the following conditions holds: \\
  (1) $n=s=2$ and $r \equiv -1~(\textrm{mod}~m)$. \\
  (2) For every prime $q~|~n,$ there exists a prime $p~|~m$ such that $q \nmid n_p$ and that either \\
  (a) $p \ne 2$, and $\gcd(q,(p^{\delta_p}-1)/s)=1$, or \\
  (b) $p=q=2,$ (C2) holds, and $m/4 \equiv \delta_p \equiv 1~(\textrm{mod}~2).$
\end{theorem}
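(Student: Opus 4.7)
My plan is to split the theorem into necessity and sufficiency, both hinging on realizing $G_{m,r}$ as a subgroup of the multiplicative group of a cyclic algebra, and then computing Brauer invariants by the local--global principle.

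For necessity, I would argue as follows. Suppose $G_{m,r} \hookrightarrow D^{\times}$ for some division ring $D$. The cyclic subgroup $\langle a \rangle$ of order $m$ is commutative and generates a subfield $L \subset D$; after identifying $a$ with a primitive $m$-th root of unity, $L \cong \Q(\zeta_m)$. Conjugation by $b$ is an inner automorphism of $D$ that stabilizes $L$ and restricts to $\sigma\colon \zeta_m \mapsto \zeta_m^r$, which has order exactly $n$ by definition of $n$. Letting $Z = L^{\langle\sigma\rangle}$, the subring of $D$ generated by $L$ and $b$ is the cyclic algebra $(L/Z, \sigma, b^n) = (L/Z, \sigma, a^t)$. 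Since it embeds in a division ring, its Schur index equals its degree $n$. The group-theoretic shape of $G_{m,r}$ (in particular whether $\langle a\rangle \cap \langle b\rangle$ accommodates a plain cyclic extension or forces a dicyclic one) translates into arithmetic compatibilities on $m, r, s, t, n$; these compatibilities are precisely (C1) or (C2), while (1), (2)(a), (2)(b) encode the non-splitness of the cyclic algebra.

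For sufficiency, I would reverse the construction. Assume the hypotheses. Set $L = \Q(\zeta_m)$, let $\sigma \in \Gal(L/\Q)$ be $\zeta_m \mapsto \zeta_m^r$ (order $n$), $Z = L^{\langle\sigma\rangle}$, and $\gamma = \zeta_m^t \in L^{\times}$. Because $s \mid r-1$ and $st = m$, one has $\sigma(\gamma) = \zeta_m^{rt} = \zeta_m^t = \gamma$, so $\gamma \in Z^{\times}$ and the cyclic algebra $A := (L/Z, \sigma, \gamma)$ is a well-defined central simple $Z$-algebra of degree $n$. Inside $A^{\times}$, the element $\zeta_m$ together with the formal symbol $u$ satisfying $u^n = \gamma$ and $u \zeta_m u^{-1} = \zeta_m^r$ obey exactly the defining relations of $G_{m,r}$, giving $G_{m,r} \hookrightarrow A^{\times}$. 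It remains to show that $A$ is a division ring.

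The hard part is exactly this last verification. By the Albert--Brauer--Hasse--Noether theorem, $A$ is a division algebra if and only if its class $[A] \in \textrm{Br}(Z)$ has order $n$, equivalently, the least common denominator of the local invariants $\textrm{inv}_v(A_v)$ over all places $v$ of $Z$ equals $n$. These invariants are computed from $\sigma$, $\gamma$, and the ramification/residue data of $L_w/Z_v$ by the standard cyclic-algebra formula. For each prime $p \mid m$, the contribution factors through the quantities $n_p$ and $\delta_p$, and condition (2)(a) translates precisely into the statement that every prime $q \mid n$ appears in the denominator of some local invariant coming from an odd prime $p$. The truly delicate case is $p = 2$: the extension $\Q_2(\zeta_{2^{\alpha}})/\Q_2$ fails to be cyclic once $\alpha \geq 3$, and this is exactly where (C2) with its extra requirement $r \equiv -1 \pmod{2^{\alpha}}$ must be invoked to isolate a quaternion-like sub-structure, while the parity condition $m/4 \equiv \delta_p \equiv 1 \pmod 2$ in (2)(b) is what forces that quaternion subalgebra to be non-split. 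Finally, case (1), $n = s = 2$ and $r \equiv -1 \pmod m$, is the degenerate dicyclic case where $A$ is already a quaternion algebra over $Z$ whose non-splitness can be checked by direct inspection of its discriminant. The $2$-adic bookkeeping is where I expect the main obstacle to lie and where the detailed analysis of \cite{1} is indispensable.
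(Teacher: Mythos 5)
The paper offers no proof of this statement: it is quoted directly from Amitsur (Theorems 3, 4 and Lemma 10 of \cite{1}), so there is no internal argument to compare yours with. That said, your outline does follow the same broad strategy as Amitsur's original proof --- identify $\langle a\rangle$ with the group of $m$-th roots of unity so that it generates $L\cong\Q(\zeta_m)$, realize the subalgebra generated by $a$ and $b$ as the cyclic algebra $(L/Z,\sigma,\zeta_m^t)$ over the fixed field $Z=L^{\langle\sigma\rangle}$, and decide non-splitness by the Albert--Brauer--Hasse--Noether theorem via local invariants --- and the individual facts you do check (that $\gamma=\zeta_m^t$ is $\sigma$-invariant because $s\mid r-1$, and that a finite-dimensional subalgebra of a division ring is a domain, hence itself a division algebra of index equal to its degree) are correct.

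As a proof, however, the proposal has genuine gaps. First, the necessity of (C1) or (C2) is asserted rather than derived: the sentence claiming that the ``group-theoretic shape'' of $G_{m,r}$ ``translates into arithmetic compatibilities'' which ``are precisely (C1) or (C2)'' is exactly the content of Amitsur's Lemma 10, and you give no argument for it. Second, in the sufficiency direction you must verify that the subgroup of $A^{\times}$ generated by $\zeta_m$ and $u$ is isomorphic to $G_{m,r}$ and not a proper quotient (e.g.\ that $u$ has exact order $ns$ and $\langle\zeta_m\rangle\cap\langle u\rangle=\langle\zeta_m^t\rangle$); this is omitted. Third, and most importantly, the entire arithmetic content of the theorem --- the computation of the local Hasse invariants of $(L/Z,\sigma,\zeta_m^t)$ in terms of $n_p$ and $\delta_p$, and the verification that conditions (1), (2)(a), (2)(b) are exactly equivalent to the least common denominator of those invariants being $n$ --- is the part you explicitly defer to \cite{1}. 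Since that computation \emph{is} the theorem, what you have written is an accurate roadmap to Amitsur's proof rather than a proof. (A smaller omission: the argument is implicitly in characteristic $0$; positive characteristic, where every finite subgroup of a division ring is cyclic, must be dispatched separately.)
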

%\begin{proof}
  %For a proof, see \cite[Theorems 3, 4, and Lemma 10]{1}.
%\end{proof}

Now, let $G$ be a finite group. One of our main tools in this section is the following
\begin{theorem}[{\cite[Theorem 7]{1}}]\label{thm 12}
  $G$ can be embedded in a division ring if and only if $G$ is of one of the following types: \\
  (1) Cyclic groups. \\
  (2) $G_{m,r}$ where the integers $m,r,$ etc, satisfy Theorem \ref{thm 11} (which is not cyclic). \\
  (3) $\mathfrak{T}^* \times G_{m,r}$ where $\mathfrak{T}^*$ is the binary tetrahedral group of order $24$, and $G_{m,r}$ is either cyclic of order $m$ with $\gcd(m,6)=1$, or of the type (2) with $\gcd(|G_{m,r}|, 6)=1.$ In both cases, for all primes $p~|~m,$ the smallest integer $\gamma_p$ satisfying $2^{\gamma_p} \equiv 1~(\textrm{mod}~p)$ is odd. \\
  (4) $\mathfrak{O}^*$, the binary octahedral group of order $48$. \\
  (5) $\mathfrak{I}^*,$ the binary icosahedral group of order $120.$
\end{theorem}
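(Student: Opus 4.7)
The strategy has three stages: first, extract strong structural constraints on any finite group $G$ that embeds in a division ring $D$; second, invoke the Zassenhaus--Suzuki classification of the resulting class of groups; third, sort through that list to decide which groups actually do embed, thereby matching items (1)--(5).

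For the necessity of the structural reduction, suppose $\iota \colon G \hookrightarrow D^\times$ is an embedding. For any abelian subgroup $A \subseteq G$, the subring $\Q[\iota(A)] \subseteq D$ is a finite-dimensional commutative $\Q$-subalgebra of a division ring, hence a field; so $A$ is a finite subgroup of a field's multiplicative group and is therefore cyclic. Thus every abelian subgroup of $G$ is cyclic, which forces each Sylow $p$-subgroup for odd $p$ to be cyclic, and the Sylow $2$-subgroup to be cyclic or generalised quaternion. The Zassenhaus--Suzuki classification of groups with this property then restricts $G$ to exactly the families appearing in (1)--(5): cyclic groups; metacyclic ``Z-groups'' $G_{m,r}$; direct products $\mathfrak{T}^* \times G_{m,r}$; and the two sporadic binary polyhedral groups $\mathfrak{O}^*$ and $\mathfrak{I}^*$.

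For the sufficiency in each case, the plan is the following. (1) is clear since any cyclic group embeds into a cyclotomic field. For (2), one realises $G_{m,r}$ as a subgroup of a cyclic cyclotomic algebra of shape $\bigl(\Q(\zeta_m)/\Q(\zeta_m)^{\langle r\rangle}, \sigma, \zeta_m^{t}\bigr)$, where $\sigma$ sends $\zeta_m$ to $\zeta_m^{r}$; the conditions (C1)/(C2) together with (1) or (2) of Theorem~\ref{thm 11} are then precisely what is needed to ensure that the local Hasse invariants reach the order required for this cyclic algebra to remain a division algebra, with the prime-by-prime analysis carried out through the numerical data $n_p$ and $\delta_p$. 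For (3), $\mathfrak{T}^*$ embeds in the Hamilton quaternions $\mathbb{H}$ over $\Q$; the coprimality $\gcd(|G_{m,r}|,6)=1$ lets one tensor $\mathbb{H}$ with the division algebra from (2), and the parity condition on $\gamma_p$ ensures that the tensor product has the correct local invariants at $2$ to stay a division algebra. For (4) and (5), one exhibits faithful embeddings of $\mathfrak{O}^*$ and $\mathfrak{I}^*$ into rational quaternion division algebras over $\Q(\sqrt{2})$ and $\Q(\sqrt{5})$, respectively.

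The main obstacle, and the heart of Amitsur's argument, is the sharp necessity within case (2): not merely that $G_{m,r}$ embeds in \emph{some} division algebra, but that it fails to do so whenever the numerical conditions of Theorem~\ref{thm 11} break down. This requires computing the Schur index of every faithful irreducible representation of $G_{m,r}$ over every number field and ruling out that some other construction could succeed; it is precisely here that the invariants $n_p, \delta_p$ and the dichotomy (C1) vs.\ (C2) enter in an essential way rather than as bookkeeping.
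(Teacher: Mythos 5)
The paper offers no proof of this statement: it is quoted directly from Amitsur (\cite[Theorem 7]{1}), so there is no internal argument to compare yours against. Your outline does track the broad strategy of Amitsur's actual proof — reduce to groups all of whose abelian subgroups are cyclic, classify those group-theoretically, then decide embeddability via Schur indices of the faithful simple components of $\Q[G]$ — but as written it has two genuine gaps.

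First, the assertion that the Zassenhaus--Suzuki classification ``restricts $G$ to exactly the families appearing in (1)--(5)'' is not correct. The classification of finite groups whose Sylow subgroups are all cyclic or generalized quaternion yields a strictly larger list than (1)--(5): among the nonsolvable candidates one finds $\mathrm{SL}(2,p)$ for every prime $p\geq 5$, and among the solvable ones there are extensions involving $\mathfrak{T}^*$ and $\mathfrak{O}^*$ that are not direct products of the form $\mathfrak{T}^*\times G_{m,r}$. Eliminating these extra candidates (for instance, showing that $\mathrm{SL}(2,p)$ embeds in a division ring only for $p=3,5$, giving $\mathfrak{T}^*$ and $\mathfrak{I}^*$) occupies a substantial part of Amitsur's paper and cannot be absorbed into the group-theoretic step. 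Second, you explicitly defer what you yourself call the heart of the argument: the proof that $G_{m,r}$ fails to embed whenever the numerical conditions of Theorem \ref{thm 11} fail, which requires computing the local Schur indices of every faithful simple component of $\Q[G_{m,r}]$, and likewise the verification that the parity condition on $\gamma_p$ in case (3) is exactly the right one. Deferring these is deferring most of the proof. Two smaller points: for the ``only if'' direction you should dispose of positive characteristic at the outset (finite subgroups of division rings of characteristic $p$ are cyclic, by Herstein), and you should record that $\Q[\iota(G)]$ is a finite-dimensional division algebra arising as a simple quotient of the group algebra $\Q[G]$ in which $G$ embeds faithfully — this is what legitimizes the passage from ``embeds in some division ring'' to Schur-index computations on $\Q[G]$.
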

%\begin{proof}
  %For a proof, see \cite[Theorem 7]{1}.
%\end{proof}

Having stated most of the necessary results, we can proceed to achieve our goal of this section. First, we give three important lemmas, all of whose proofs follow from Theorem \ref{thm 11} unless otherwise stated.

\begin{lemma}\label{lem 15}
  Suppose that $n=2,$ $m \geq 2$ is an integer with $\varphi(m)~|~8$ and $\gcd(n,t)=\gcd(s,t)=1.$ Then the group $G:=G_{m,r}$ can be embedded in a division ring if and only if $G$ is one of the following groups (up to isomorphism): \\
  (1) $C_4$\footnote{If we want $G_{m,r}$ to be non-cyclic as in Theorem \ref{thm 12}, then we may exclude this case.}; \\
  (2) $\textrm{Dic}_{12}$, a dicyclic group of order $12$; \\
  (3) $\textrm{Dic}_{20}$, a dicyclic group of order $20$; \\
  (4) $C_5 \rtimes C_8$, a semidirect product of $C_5$ and $C_8$;\\
  (5) $C_3 \rtimes C_{16}$, a semidirect product of $C_3$ and $C_{16}$;\\
  (6) $\textrm{Dic}_{60}$, a dicyclic group of order $60$.
\end{lemma}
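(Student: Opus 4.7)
The plan is to enumerate all pairs $(m,r)$ satisfying the hypotheses and to test each against the embedding criterion of Theorem~\ref{thm 11}. The constraint $\varphi(m)\mid 8$ restricts $m$ to the finite list
\[
\{2,3,4,5,6,8,10,12,15,16,20,24,30\}.
\]
For each such $m$, I would enumerate the units $r \bmod m$ of order exactly $2$, then discard those for which $t = m/s$ fails (C1), i.e.\ fails to be odd and coprime to $s = \gcd(r-1,m)$. This already kills most candidates when $4 \mid m$, because then $s$ tends to force $t$ even.

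Since $n=2$ has the single prime divisor $q=2$ and we are under (C1), alternative~(2)(b) of Theorem~\ref{thm 11} is automatically vacuous. The embedding criterion therefore collapses to two subcases: either (1) $s=2$ with $r\equiv -1\pmod m$, or (2)(a) there exists an odd prime $p\mid m$ such that $n_p$ is odd and $(p^{\delta_p}-1)/s$ is odd. For subcase~(1), $r\equiv -1$ forces $s=2$ (so $m$ is even) and (C1) forces $t = m/2$ to be odd, hence $m\equiv 2\pmod 4$; combined with $\varphi(m)\mid 8$ this pins $m$ down to $\{6,10,30\}$, producing the dicyclic groups $\textrm{Dic}_{12}$, $\textrm{Dic}_{20}$, $\textrm{Dic}_{60}$. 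The boundary value $m=2$ (where the usual definition forces $n=1$) corresponds to the cyclic group $C_4$ listed in (1); by the footnote, this case may be discarded when one restricts to genuinely non-cyclic $G_{m,r}$ as in Theorem~\ref{thm 12}.

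For subcase~(2)(a), I would sweep through the remaining $(m,r)$ pairs, computing $(n_p,\delta_p)$ for each odd prime $p\mid m$. The majority are killed either by $n_p$ being even (typically because $r\equiv -1$ modulo $m/p^{\alpha_p}$) or by $(p^{\delta_p}-1)/s$ being even. A careful enumeration shows that exactly two pairs survive, namely $(m,r)=(20,9)$ and $(m,r)=(24,17)$. To identify their isomorphism classes, I would apply the Chinese Remainder Theorem to $\langle a\rangle$ and track the $b$-action. For $(20,9)$: $\langle a\rangle\cong C_4\times C_5$, $b$ has order $8$ with $b^2$ generating the $C_4$-factor, $b$ acts trivially on $C_4$ (since $9\equiv 1\bmod 4$) but inverts $C_5$ (since $9\equiv -1\bmod 5$), giving $G_{20,9}\cong C_5\rtimes C_8$. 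The analogous calculation for $(24,17)$ gives $\langle a\rangle\cong C_8\times C_3$ with $b$ of order $16$, $b^2$ generating $C_8$, trivial on $C_8$ and inverting $C_3$, hence $G_{24,17}\cong C_3\rtimes C_{16}$.

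The main obstacle is the bookkeeping in the (2)(a) sweep: after (C1)-filtering, roughly a dozen pairs remain and each requires a parity check for every odd prime $p\mid m$. The individual checks are elementary, but it is easy to miss a surviving pair (or to falsely accept one) without systematic tabulation. Once the surviving pairs are isolated, the CRT-based structural identification of the associated semidirect products is short, requiring only that one track $r \bmod p^{\alpha_p}$ for each prime divisor $p$ of $m$.
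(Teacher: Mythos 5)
Your proposal is correct and follows essentially the same route as the paper: enumerate the finitely many $m$ with $\varphi(m)\mid 8$, filter by condition (C1), and test each surviving pair $(m,r)$ against the criterion of Theorem \ref{thm 11}, identifying the resulting groups by their presentations. The only cosmetic difference is organizational — you split by which clause of Theorem \ref{thm 11} applies, while the paper splits by the value of $m$ and dispatches the odd square-free cases $m\in\{3,5,15\}$ via Amitsur's Corollary 5 instead of the parity check on $(p^{\delta_p}-1)/s$ — and both yield the same surviving pairs and the same six groups.
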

\begin{proof}
 Note that $t$ is odd and $m \in \{2,3,4,5,6,8,10,12,15,16,20,24,30\}.$ Hence, we have the following three cases to consider: \\
\indent (1) If $m \in \{3,5,15 \},$ then $|G|=2m$ is square-free. Now, if $m=3$ or $5$, then it is easy to see that $r \equiv -1~(\textrm{mod}~m)$ (by the definition of $n$). Similarly, if $m=15,$ then it is easy to see that $r \equiv -1$ or $r \equiv \pm 4~(\textrm{mod}~m)$. Then in any of these cases, by looking at the presentation of $G,$ we can see that $G$ is not cyclic. Hence by \cite[Corollary 5]{1}, $G$ cannot be embedded in a division ring. \\
\indent (2) If $m \in \{2,4,6,8,10,12\},$ then we have the following six subcases to consider: \\
  (i) If $m=2,$ then since $\gcd(n,t)=1,$ we get $s=2, t=1.$ In particular, we have $n=s=2$ and $r \equiv -1~(\textrm{mod}~2).$ Hence, $G$ can be $G_{2,r}=C_4.$ \\
  (ii) If $m=4,$ then since $\gcd(n,t)=1,$ we get $s=4, t=1.$ By the definition of $s$, we have $m=4~|~r-1$, and hence, this case cannot occur because of our assumption that $n=2.$ \\
\indent By a similar argument as in (ii), we will exclude all the cases when $t=1$ for the rest of the proof of the lemma.\\
  (iii) If $m=6,$ then we only need to consider the case when $s=2, t=3.$ Since $n=2,$ we have $r \equiv -1~(\textrm{mod}~6).$ Hence, $G$ can be $G_{6,r}=\textrm{Dic}_{12}.$ \\
  (iv) If $m=8,$ then we get $s=8, t=1$, and hence, this case cannot occur. \\
  (v) If $m=10,$ then we only need to consider the case when $s=2, t=5.$ Since $n=2,$ we have $r \equiv -1~(\textrm{mod}~10).$ Hence, $G$ can be $G_{10,r}=\textrm{Dic}_{20}.$ \\
  (vi) If $m=12,$ then we only need to consider the case when $s=4, t=3.$ If there exists a prime number $p~|~12$ such that $q=2~\nmid~n_p,$ then either $p=2$ or $p=3.$ Since (C2) does not hold, $p \ne 2.$ If $p=3,$ then $\alpha_3 = 1, \delta_3 = 2, $ and hence, we have $\gcd(2, (3^2 -1)/4)=2.$ Therefore this case cannot occur by Theorem \ref{thm 11}. \\
(3) If $m \in \{16,20,24,30\},$ then we have the following four subcases to consider: \\
  (i) If $m=16,$ then we get $s=16, t=1$, and hence, this case cannot occur. \\
  (ii) If $m=20,$ then we only need to consider the case when $s=4, t=5.$ Since $n=2,$ we have $r \equiv -1$ or $r \equiv \pm 9~(\textrm{mod}~20).$ By the definition of $s,$ it follows that $r \equiv 9~(\textrm{mod}~20).$ Now, by taking $q=2, p=5$ (so that $n_5=\alpha_5 =\delta_5=1$) in Theorem \ref{thm 11}, we can see that $G$ can be $G_{20,r}=C_5 \rtimes C_8$ (by looking at the presentation of $G$). \\
  (iii) If $m=24,$ then we only need to consider the case when $s=8, t=3.$ Since $n=2,$ we have $r \equiv -1$ or $r \equiv \pm 5$ or $r \equiv \pm 7$ or $r \equiv \pm 11~(\textrm{mod}~24).$ By the definition of $s,$ it follows that $r \equiv 17~(\textrm{mod}~24).$ Now, by taking $q=2, p=3$ (so that $n_3=\alpha_3 =1, \delta_3=2$) in Theorem \ref{thm 11}, we can see that $G$ can be $G_{24,r}=C_3 \rtimes C_{16}$ (by looking at the presentation of $G$). \\
  (iv) If $m=30,$ then we get $s=2, t=15$ or $s=6, t=5$ or $s=10, t=3$. If $s=2, t=15,$ then since $n=2,$ we have $r \equiv -1$ or $r \equiv \pm 11~(\textrm{mod}~30).$ By the definition of $s,$ it follows that $r \equiv -1~(\textrm{mod}~30).$ Hence, $G$ can be $G_{30,r}=\textrm{Dic}_{60}.$ If $s=6, t=5$ or $s=10, t=3,$ then by a similar argument as in (2)-(vi), we can see that both of these cases cannot occur. \\
\indent This completes the proof.
\end{proof}
\begin{remark}
The group $C_5 \rtimes C_8$ (resp.\ $C_3 \rtimes C_{16}$) is (isomorphic to) the group with GAP SmallGroup ID [40,1] (resp.\ [48,1]).
\end{remark}

\begin{lemma}\label{lem 16}
  Suppose that $n=2, m=2^{\alpha} \cdot m^{\prime}$ with $\varphi(m)~|~8, s = 2 s^{\prime}, \gcd(s,t)=2,$ and $r \equiv -1~(\textrm{mod}~2^{\alpha})$ where $\alpha \geq 2$ is an integer and $m^{\prime}, s^{\prime}$ are odd integers. Then the group $G:=G_{m,r}$ can be embedded in a division ring if and only if $G$ is one of the following groups:\\
  (1) $Q_8,$ a quaternion group; \\
  (2) $\textrm{Dic}_{16},$ a dicyclic group of order $16$; \\
  (3) $\textrm{Dic}_{24},$ a dicyclic group of order $24;$ \\
  (4) $\textrm{Dic}_{32},$ a dicyclic group of order $32;$ \\
  (5) $\textrm{Dic}_{40},$ a dicyclic group of order $40;$ \\
  (6) $\textrm{Dic}_{48},$ a dicyclic group of order $48.$
\end{lemma}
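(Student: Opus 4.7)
The plan is to mimic the case-by-case strategy of Lemma \ref{lem 15}, but now working under condition (C2) rather than (C1). First I would use $\varphi(m)\mid 8$ together with the factorization $m=2^\alpha m'$ ($\alpha\geq 2$, $m'$ odd) to reduce to the finite list $m\in\{4,8,12,16,20,24\}$. Since $|G_{m,r}|=mn=2m$, these $m$ are exactly the ones for which the order of $G$ matches the six candidates $Q_8,\textrm{Dic}_{16},\textrm{Dic}_{24},\textrm{Dic}_{32},\textrm{Dic}_{40},\textrm{Dic}_{48}$ of the statement, which is a reassuring sanity check at the outset.

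Next, for each $m$ I would enumerate the admissible residues of $r$ modulo $m$: the constraints $\gcd(r,m)=1$, $n=2$ (hence $r^2\equiv 1\pmod{m}$), and the congruence $r\equiv -1\pmod{2^\alpha}$ cut out only a handful of candidates. For each candidate I would compute $s=\gcd(r-1,m)$ and $t=m/s$, discard those failing the divisibility part of (C2) (namely $s=2s'$, $s'$ odd, and $\gcd(s,t)=2$), and then apply Theorem \ref{thm 11}. The pleasant observation that should drive the write-up is that in every surviving case one actually has $r\equiv -1\pmod{m}$ and $s=2$, so it is clause (1) of Theorem \ref{thm 11} ($n=s=2$ and $r\equiv -1\pmod{m}$) that guarantees embeddability, and the resulting presentation $a^m=1,\ b^2=a^{m/2},\ bab^{-1}=a^{-1}$ is exactly the dicyclic group $\textrm{Dic}_{2m}$ (reducing to $Q_8$ when $m=4$).

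The real work, and the main obstacle, is ruling out the \emph{other} residues that satisfy (C2). These arise for $m\in\{12,20,24\}$: for instance $r=7\pmod{12}$ gives $s=6,\ t=2$; $r=11\pmod{20}$ gives $s=10,\ t=2$; $r=7\pmod{24}$ gives $s=6,\ t=4$. In each such case clause (1) is no longer available because $s\neq 2$, so I would have to run through clause (2) of Theorem \ref{thm 11} for every prime divisor $p$ of $m$. The computation to carry out is: for $q=2\mid n$, determine $n_p$ and $\delta_p$ from the definitions and check whether (a) ($p\neq 2$, $\gcd(q,(p^{\delta_p}-1)/s)=1$) or (b) ($p=q=2$, (C2), $m/4\equiv\delta_p\equiv 1\pmod 2$) can hold. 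A short calculation should show that (a) always fails because $2\mid n_p$ for the odd prime divisor, while (b) fails because either $m/4$ is even or $\delta_2$ is even. This elimination step is the only delicate piece of the argument.

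Finally I would collect the six surviving presentations and identify them as the dicyclic groups listed: $(m,r)=(4,3),(8,7),(12,11),(16,15),(20,19),(24,23)$ give $Q_8,\textrm{Dic}_{16},\textrm{Dic}_{24},\textrm{Dic}_{32},\textrm{Dic}_{40},\textrm{Dic}_{48}$ respectively. Conversely, each of these groups is realized as a $G_{m,r}$ with $(m,r)$ in the above list satisfying (C2) and clause (1) of Theorem \ref{thm 11}, so Theorem \ref{thm 11} yields an embedding into a division ring, completing both directions.
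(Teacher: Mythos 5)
Your proposal is correct and follows essentially the same route as the paper: reduce to $m\in\{4,8,12,16,20,24\}$, invoke clause (1) of Theorem \ref{thm 11} for the residues with $s=2$ and $r\equiv-1\pmod{m}$ (yielding the six dicyclic groups), and eliminate the remaining admissible residues for $m\in\{12,20,24\}$ via clause (2), where (a) fails since $2\mid n_p$ for the odd prime $p$ and (b) fails since $\delta_2$ is even. The only cosmetic difference is that you enumerate by the residue $r$ while the paper enumerates by the pairs $(s,t)$; your numerical claims in the elimination step all check out.
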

\begin{proof}
Note that $m \in \{4,8,12,16,20,24\}.$ Hence, we have the following six cases to consider: \\
\indent (1) If $m=4$, then we get $s=t=2.$ Since $n=s=2$ and $r \equiv -1~(\textrm{mod}~4)$ (by assumption), $G$ can be $G_{4,r}=Q_8$. \\
\indent (2) If $m=8,$ then we get $s=2,t=4.$ Since $n=s=2$ and $r \equiv -1~(\textrm{mod}~8)$ (by assumption), $G$ can be $G_{8,r}=\textrm{Dic}_{16}.$\\
\indent (3) If $m=12,$ then we get $s=2,t=6$ or $s=6,t=2.$ If $s=2,t=6,$ then since $n=s=2,$ we have $r \equiv -1~ (\textrm{mod}~12)$ (by the definition of $s$). Hence, $G$ can be $G_{12,r}=\textrm{Dic}_{24}.$ If $s=6,t=2,$ then since $n=2,s=6,$ we have $r \equiv 7 ~(\textrm{mod}~12).$ If there exists a prime number $p~|~12$ such that $q=2 \nmid n_p,$ then we get $p=2$. (Note that $n_2=1$ and $n_3=2.$) Then we further compute $\alpha_2=\delta_2=2$, and this contradicts Theorem \ref{thm 11}-(2). Hence, this case cannot occur. \\
\indent (4) If $m=16,$ then we get $s=2,t=8.$ Then by a similar argument as in (2), $G$ can be $G_{16,r}=\textrm{Dic}_{32}.$ \\
\indent (5) If $m=20,$ then we get $s=2,t=10$ or $s=10,t=2.$ If $s=2,t=10,$ then since $n=s=2,$ we have $r \equiv -1~ (\textrm{mod}~20)$ (by the definition of $s$). Hence, $G$ can be $G_{20,r}=\textrm{Dic}_{40}.$ If $s=10,t=2,$ then since $n=2,s=10,$ we have $r \equiv 11 ~(\textrm{mod}~20).$ If there exists a prime number $p~|~20$ such that $q=2 \nmid n_p,$ then we get $p=2$. (Note that $n_2=1$ and $n_5=2.$) Then we further compute $\alpha_2=2,\delta_2=4$, and this contradicts Theorem \ref{thm 11}-(2). Hence, this case cannot occur. \\
\indent (6) If $m=24,$ then we get $s=2,t=12$ or $s=6,t=4.$ If $s=2,t=12,$ then since $n=s=2$ and $r \equiv -1 ~(\textrm{mod}~8),$ we have $r \equiv -1~ (\textrm{mod}~24).$ Hence, $G$ can be $G_{24,r}=\textrm{Dic}_{48}.$ If $s=6,t=4,$ then since $n=2,s=6$ and $r \equiv -1 ~(\textrm{mod}~8),$ we have $r \equiv 7 ~(\textrm{mod}~24).$ Then by a similar argument as in (5), we can see that this case cannot occur. \\
\indent This completes the proof.
\end{proof}

\begin{lemma}\label{addlem 15}
  Suppose that $n=4,$ $m \geq 2$ is an integer with $\varphi(m)~|~8,$ and $\gcd(n,t)=\gcd(s,t)=1.$ Then the group $G:=G_{m,r}$ can be embedded in a division ring if and only if $G=C_5 \rtimes C_{16}$, a semidirect product of $C_5$ and $C_{16}$.
\end{lemma}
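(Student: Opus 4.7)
The plan is to carry out a case analysis parallel to that of Lemmas \ref{lem 15} and \ref{lem 16}. The condition $\varphi(m) \mid 8$ restricts $m$ to a finite list, and the requirement that $r$ have multiplicative order exactly $n=4$ in $(\Z/m\Z)^{*}$ further restricts this to
\[
m \in \{5, 10, 15, 16, 20, 30\}.
\]
The hypothesis $\gcd(n,t)=1$ forces $t$ to be odd. For each candidate $m$ I would enumerate the pairs $(s,t)$ with $st=m$, $t$ odd, $\gcd(s,t)=1$, and then the residues $r$ satisfying $\ord_{m}(r)=4$ and $\gcd(r-1,m)=s$.

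Next I would apply Theorem \ref{thm 11}. Since the only prime $q$ dividing $n=4$ is $q=2$, and since the standing assumption $\gcd(s,t)=1$ places us in case (C1) (which is incompatible with (C2)), I need some odd prime $p\mid m$ with $2\nmid n_{p}$ and with $(p^{\delta_{p}}-1)/s$ odd, i.e.\ case (a) of Theorem \ref{thm 11}-(2). A mechanical check then eliminates all but one case: for $m=5$ the only choice is $s=1$, so $(5-1)/s=4$ is even; for $m=10$ with $(s,t)=(2,5)$, the prime $p=5$ gives $(5-1)/2=2$, while $p=2$ gives $n_{2}=4$; for $m=15$, every odd prime $p\mid 15$ turns out to have $n_{p}$ even; for $m=16$, no admissible $r$ of order $4$ yields $t$ odd; and for $m=30$, each of the four admissible residues $r\in\{7,13,17,23\}$ is killed by an analogous parity computation. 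The only surviving case is $m=20$ with $(s,t)=(4,5)$ and $r\in\{13,17\}$, for which $p=5$ gives $n_{5}=\delta_{5}=1$ and $(5-1)/4=1$, odd, so Theorem \ref{thm 11}-(2)(a) is satisfied.

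It remains to identify $G_{20,13}$. The defining relations
\[
a^{20}=1, \qquad b^{4}=a^{5}, \qquad bab^{-1}=a^{13}
\]
yield $b^{16}=a^{20}=1$, and since $b^{4}=a^{5}$ has order $4$ in $\langle a\rangle$, the element $b$ has order exactly $16$. Setting $c:=a^{4}$ (of order $5$), a direct computation gives $bcb^{-1}=a^{52}=a^{12}=c^{3}$; combined with $\langle c\rangle\cap\langle b\rangle=\{1\}$ (the orders are coprime) and $|\langle c\rangle|\cdot|\langle b\rangle|=80=|G|$, this exhibits $G$ as $C_{5}\rtimes C_{16}$, with $C_{16}$ acting on $C_{5}$ through its quotient $C_{4}\cong \Aut(C_{5})$. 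The main obstacle is simply the bookkeeping of the case analysis; each individual verification is routine given Theorem \ref{thm 11}.
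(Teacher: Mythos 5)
Your proposal is correct in substance and follows the same basic strategy as the paper (enumerate the admissible $m$, then test each case against Theorem \ref{thm 11}), but it is longer than necessary and contains one inaccurate justification. The paper shortcuts the case analysis by invoking the fact from \cite[\S7]{1} that $n^2$ divides $|G_{m,r}|=mn$ for an embeddable group, i.e.\ $n\mid m$; with $n=4$ this forces $4\mid m$ and immediately reduces the list to $m\in\{16,20\}$, so the cases $m=5,10,15,30$ that you work through by hand never arise. Your extra work is harmless, but your stated reason for killing $m=15$ --- ``every odd prime $p\mid 15$ turns out to have $n_p$ even'' --- is false for the residues $r\equiv 1\pmod 3$ (namely $r=7,13$, which have $s=3$, $t=5$): there $n_5=1$ is odd, and the case is actually eliminated because $\delta_5=2$ and $\gcd\bigl(2,(5^{2}-1)/3\bigr)=\gcd(2,8)=2$, so it is condition (2)(a) of Theorem \ref{thm 11} on the $\gcd$, not the parity of $n_p$, that fails. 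The same caveat applies to $r=7,13$ in the $m=30$ case ($n_5=1$ there as well, with $(5^2-1)/6=4$ even). These are repairable bookkeeping slips in cases the paper's argument bypasses entirely, so the conclusion stands; the surviving case $m=20$, $(s,t)=(4,5)$, $r\in\{13,17\}$ and your identification of $G_{20,13}$ as $C_5\rtimes C_{16}$ agree with the paper and are verified correctly.
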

\begin{proof}
 Note that $t$ is odd and $m \in \{2,3,4,5,6,8,10,12,15,16,20,24,30\}.$ Furthermore, by the definition of $n$ and the fact that $n^2~|~|G|$ (see \cite[\S7]{1}), it suffices to consider the following two cases: \\
\indent (1) If $m=16,$ then since $\gcd(n,t)=1,$ we get $s=16, t=1.$ By the definition of $s$, we have $m=16~|~r-1$, and hence, this case cannot occur because of our assumption that $n=4.$ \\ 
\indent By a similar argument as in (1), we will exclude the case when $t=1$ for part (2) below:  \\
\indent (2) If $m=20,$ then by the definition of $n$ and $s,$ we get $r \equiv 13$ or $r \equiv 17~(\textrm{mod}~20)$ and $s=4,t=5$. In both cases (on $r$)\footnote{In fact, in this specific situation, the groups $G_{20,13}$ and $G_{20,17}$, a priori, are isomorphic to each other.}, by taking $q=2, p=5$ (so that $n_5=\alpha_5 =\delta_5=1$) in Theorem \ref{thm 11}, we can see that $G$ can be $G_{20,r}=C_5 \rtimes C_{16}$ (by looking at the presentation of $G$).\\
\indent This completes the proof.
\end{proof}
\begin{remark}
The group $C_5 \rtimes C_{16}$ is (isomorphic to) the group with GAP SmallGroup ID [80,3].
\end{remark}

Now, we focus on our situation of Lemma \ref{poss end alg}-(1) and (2) respectively. First, let $D$ be a division algebra of degree $2$ over its center $K$, where $K$ is a quartic CM-field. If $G$ is a finite subgroup of the multiplicative subgroup of $D,$ then $G$ can be of any of the types (1)-(5) in Theorem \ref{thm 12}, a priori. If the group $G:=G_{m,r}$ is contained in $D^{\times},$ then $n~|~2$ (see \cite[\S7]{1}), and hence, we have that either $n=1$ or $n=2.$ Furthermore, if $n=2,$ then $4$ divides $|G|$ (see \cite[\S7]{1}). Also, since $G$ contains an element of order $m$ and $\dim_{\Q}D = 16,$ we have $m \leq 30$ with $\varphi(m)~|~8.$ The last preliminary result that we need is the following

\begin{theorem}[{\cite[Theorems 9 and 10]{1}}]\label{exc gps}
Let $D$ be a division algebra of degree $2$ over its center $K$, where $K$ is a quartic CM-field $K$. \\
(a) If $D$ contains an $\mathfrak{O}^*,$ then $\sqrt{2} \in K.$\\
(b) If $D$ contains an $\mathfrak{I}^*,$ then $\sqrt{5} \in K.$  \\
Moreover, in both cases, we have $D = D_{2,\infty} \otimes_{\Q} K$.
\end{theorem}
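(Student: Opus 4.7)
The plan is a uniform structural argument: analyze the $\Q$-subalgebra of $D$ generated by the embedded finite subgroup, identify it as a quaternion algebra over an explicit real quadratic field, and then force that quadratic field into the center $K$ by a simplicity and dimension argument inside the central simple $K$-algebra $D$. I describe the argument for part (a); part (b) will go through verbatim with $\mathfrak{I}^{*}$ and $\Q(\sqrt{5})$ replacing $\mathfrak{O}^{*}$ and $\Q(\sqrt{2})$.

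First, let $E \subseteq D$ denote the $\Q$-subalgebra generated by the embedded copy of $\mathfrak{O}^{*}$. Being a finite-dimensional $\Q$-subalgebra of the division algebra $D$, $E$ is itself a division algebra, so it must equal the (unique) faithful Wedderburn factor of $\Q[\mathfrak{O}^{*}]$ that happens to be a division ring. Using the classical embedding $\mathfrak{O}^{*} \hookrightarrow \mathbb{H}^{\times}$ into the unit quaternions (equivalently, a Schur-index computation at $\infty$ for a faithful two-dimensional complex representation of $\mathfrak{O}^{*}$ whose character takes values in $\Q(\sqrt{2})$), I expect to identify
\[
  E \;\cong\; D_{2,\infty}\otimes_{\Q}\Q(\sqrt{2}),
\]
a quaternion algebra over $\Q(\sqrt{2})$ of $\Q$-dimension $8$. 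Concretely, $\sqrt{2}$ appears as $a + a^{-1}$ for an element $a\in\mathfrak{O}^{*}$ of order $8$ (from the Sylow-$2$ subgroup $Q_{16}$), while a $Q_{8}\subseteq \mathfrak{O}^{*}$ generates the rational quaternion algebra $D_{2,\infty}$ inside $E$.

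Next I force $\sqrt{2}\in K$ by a simplicity argument. Suppose for contradiction that $\sqrt{2}\notin K$; then $K\cap \Q(\sqrt{2}) = \Q$, the extension $K(\sqrt{2}) = K\otimes_{\Q}\Q(\sqrt{2})$ is a field of degree $8$ over $\Q$, and consequently $K\otimes_{\Q}E = K(\sqrt{2})\otimes_{\Q(\sqrt{2})}E$ is a central simple $K(\sqrt{2})$-algebra of $\Q$-dimension $32$. The natural $\Q$-algebra map $\mu\colon K\otimes_{\Q}E \to D$, $k\otimes e\mapsto ke$, is well defined (since $K$ is central in $D$) and nonzero, hence injective by simplicity of the source; but then $32 = \dim_{\Q}(K\otimes_{\Q}E) \le \dim_{\Q}D = 16$, a contradiction. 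Thus $\Q(\sqrt{2})\subseteq K$, proving (a). For the moreover statement, now that $\sqrt{2}\in K$ the analogous map $K\otimes_{\Q(\sqrt{2})}E \to D$ is a $K$-algebra homomorphism from a central simple $K$-algebra of $K$-dimension $4$ into $D$, which also has $K$-dimension $4$; nonzero plus simplicity plus matching dimensions force it to be an isomorphism, whence
\[
  D \;\cong\; K\otimes_{\Q(\sqrt{2})}\bigl(D_{2,\infty}\otimes_{\Q}\Q(\sqrt{2})\bigr) \;\cong\; D_{2,\infty}\otimes_{\Q}K.
\]

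The main obstacle is the first step: the explicit identification of the faithful division-algebra Wedderburn factor of $\Q[\mathfrak{O}^{*}]$ (resp.\ $\Q[\mathfrak{I}^{*}]$) as $D_{2,\infty}\otimes_{\Q}\Q(\sqrt{2})$ (resp.\ $D_{2,\infty}\otimes_{\Q}\Q(\sqrt{5})$). The essential input is that these binary polyhedral groups embed in the Hamilton quaternions $\mathbb{H} = D_{2,\infty}\otimes_{\Q}\R$, whose nontrivial invariant $\tfrac{1}{2}$ at $\infty$ forces the Schur index of the relevant faithful character to be exactly $2$; the center of the corresponding Wedderburn factor is then read off from the field of character values, namely $\Q(\sqrt{2})$ for $\mathfrak{O}^{*}$ (from the trace $\sqrt{2}$ of an element of order $8$) and $\Q(\sqrt{5})$ for $\mathfrak{I}^{*}\cong \SL_{2}(\F_{5})$ (from the trace $\tfrac{-1+\sqrt{5}}{2}$ of an element of order $10$). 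After this identification, the remaining steps reduce to the clean simplicity-and-dimension argument sketched above.
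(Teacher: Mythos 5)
This statement is quoted in the paper directly from Amitsur (\cite[Theorems 9 and 10]{1}) with no proof supplied, so there is no in-paper argument to compare against; what you have written is a self-contained reconstruction, and it is essentially correct. The two-stage structure is sound: once one knows that the subalgebra $E$ generated by the embedded group contains a quaternion algebra with center (at least) $\Q(\sqrt{2})$ (resp.\ $\Q(\sqrt{5})$), the map $K\otimes_{\Q}E\to D$ together with simplicity of the source and $\dim_{\Q}D=16$ forces $\sqrt{2}\in K$, and the same map over $\Q(\sqrt{2})$ then gives $D\cong D_{2,\infty}\otimes_{\Q}K$ by dimension count. Two points deserve to be made explicit. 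First, the identification of the faithful division-ring Wedderburn component: you correctly flag this as the main input, and for $\mathfrak{O}^{*}$ one should note that the uniqueness claim requires checking that the faithful $4$-dimensional component is \emph{not} a division ring (similarly the faithful $4$- and $6$-dimensional components for $\mathfrak{I}^{*}$); alternatively, the full identification of $E$ is not needed --- the containment $E\supseteq D_{2,\infty}\otimes_{\Q}\Q(\sqrt{2})$ suffices, and for $\mathfrak{O}^{*}$ this can be read off directly from $Q_{16}\le\mathfrak{O}^{*}$ (the element $a+a^{-1}$ squares to $2$ and centralizes the $Q_8$), while for $\mathfrak{I}^{*}$ one does need the reduced-trace/character-field argument since $\sqrt{5}$ does not visibly commute with a $Q_8$ inside the group. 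Second, in the step ``$\sqrt{2}\notin K$ implies $K\otimes_{\Q}\Q(\sqrt{2})$ is a field,'' one should say which square root of $2$ is meant: the element $\gamma\in E$ with $\gamma^{2}=2$ commutes with the central field $K$, so $\Q(K,\gamma)$ is a commutative domain, hence a field, in which any two square roots of $2$ differ by a sign; this both disambiguates the hypothesis and supplies the identification of the two copies of $\Q(\sqrt{2})$ needed for the well-definedness of $K\otimes_{\Q(\sqrt{2})}E\to D$ in the ``moreover'' step. With these glosses your argument is complete and, modulo the standard Schur-index computation, independent of Amitsur's original proof.
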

%\begin{proof}
%For a proof, see \cite[Theorems 9 and 10]{1}.
%\end{proof}

Summarizing, we have the following

\begin{theorem}\label{deg2 case}
Let $D$ be a division algebra of degree $2$ over its center $K$, where $K$ is a quartic CM-field $K$. If a finite group $G$ (of even order\footnote{This assumption can be made based on the goal of this paper.}) can be embedded in $D^{\times}$, then $G$ is one of the following groups:\\
(1) $C_2, C_4, C_6, C_8, C_{10}, C_{12}, C_{16}, C_{20}, C_{24}, C_{30}$; \\
(2) $Q_8, \textrm{Dic}_{12}, \textrm{Dic}_{16}, \textrm{Dic}_{20}, \textrm{Dic}_{24}, \textrm{Dic}_{32}, \textrm{Dic}_{40}, \textrm{Dic}_{48}, \textrm{Dic}_{60}$; \\
(3) $C_5 \rtimes C_8, C_3 \rtimes C_{16}$;\\ 
(4) $\mathfrak{T}^*$; \\
(5) $\mathfrak{O}^*$; \\
(6) $\mathfrak{I}^*.$
\end{theorem}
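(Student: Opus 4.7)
The plan is to combine Amitsur's structural classification (Theorem~\ref{thm 12}) with the ambient dimension bound $\dim_{\Q} D = [D:K][K:\Q] = 4 \cdot 4 = 16$. Since every maximal commutative subfield of $D$ has degree exactly $d \cdot [K:\Q] = 8$ over $\Q$, any cyclic subgroup $C_m \hookrightarrow D^{\times}$ produces an embedding $\Q(\zeta_m) \hookrightarrow D$ and therefore forces $\varphi(m) \mid 8$, restricting $m$ to $\{1,2,3,4,5,6,8,10,12,15,16,20,24,30\}$. Moreover, as noted in the paragraph immediately preceding the theorem, any $G_{m,r} \subseteq D^{\times}$ must satisfy $n \mid 2$.

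With these constraints in hand, I would walk through the five types of Theorem~\ref{thm 12}. Type~(1) (cyclic) immediately yields list~(1) upon restricting to even order. For type~(2) (non-cyclic $G_{m,r}$), necessarily $n = 2$, and the alternative conditions (C1) and (C2) are precisely the hypotheses of Lemmas~\ref{lem 15} and~\ref{lem 16}; these lemmas produce the dicyclic groups $Q_8 = \textrm{Dic}_8, \textrm{Dic}_{12}, \textrm{Dic}_{16}, \textrm{Dic}_{20}, \textrm{Dic}_{24}, \textrm{Dic}_{32}, \textrm{Dic}_{40}, \textrm{Dic}_{48}, \textrm{Dic}_{60}$ along with the two semidirect products $C_5 \rtimes C_8$ and $C_3 \rtimes C_{16}$, giving lists~(2) and~(3). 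Types~(4) and~(5) contribute $\mathfrak{O}^*$ and $\mathfrak{I}^*$; by Theorem~\ref{exc gps} the only constraint imposed on $K$ is that $\sqrt{2} \in K$ (respectively $\sqrt{5} \in K$), which is compatible with $K$ being a quartic CM-field (take, e.g., $K = \Q(\sqrt{2}, i)$ or $K = \Q(\zeta_5)$), so both groups survive.

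The subtle case is type~(3), $G = \mathfrak{T}^* \times G_{m,r}$. Here I would exploit that $\mathfrak{T}^*$ contains an element of order~$6$ and $G_{m,r}$ contains the cyclic subgroup $\langle a \rangle = C_m$, so $G \supseteq \mathfrak{T}^* \times C_m$ has a cyclic subgroup of order $6m$ (using $\gcd(m,6) = 1$). Then $\varphi(6m) = 2\varphi(m) \mid 8$ forces $\varphi(m) \mid 4$, hence $m \in \{1, 5\}$. For $m = 5$ one computes $\gamma_5 = 4$, which is even, violating the parity condition of Theorem~\ref{thm 12}~(3); hence $m = 1$, which forces $G_{m,r} = C_1$ and therefore $G = \mathfrak{T}^*$.

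The main chore is the case-by-case bookkeeping in Lemmas~\ref{lem 15} and~\ref{lem 16}, but this has already been carried out in the preceding subsections; the remaining labor is the clean case split above together with the exclusion of $\mathfrak{T}^* \times C_m$ for $m > 1$, which is the only step requiring a genuinely new argument beyond the cited results.
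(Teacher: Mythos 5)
Your proposal is correct and follows essentially the same route as the paper: both reduce to Amitsur's classification (Theorem~\ref{thm 12}), use $\varphi(m)\mid 8$ from $\dim_{\Q}D=16$ for the cyclic case, invoke Lemmas~\ref{lem 15} and~\ref{lem 16} for the non-cyclic $G_{m,r}$, keep $\mathfrak{O}^*$ and $\mathfrak{I}^*$ via Theorem~\ref{exc gps}, and kill $\mathfrak{T}^*\times C_5$ via $\gamma_5=4$. Your treatment of the $\mathfrak{T}^*\times G_{m,r}$ case (using the cyclic subgroup of order $6m$ to force $m\in\{1,5\}$) is just a slightly more explicit version of the paper's terse assertion there.
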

\begin{proof}
  We refer the list of possible such groups to Theorem \ref{thm 12}. Suppose that $G$ is cyclic. Then we can write $G=\langle f\rangle$ for some element $f$ of order $d$. Then according to the argument given before Theorem \ref{exc gps}, we have that $d \in \{2,4,6,8,10,12, 16, 20, 24, 30\}$. Hence, we obtain (1). If $G=G_{m,r}$ with $n=2$ and $\phi(m) ~|~ 8,$ then (2) and (3) follow from Lemmas \ref{lem 15} and \ref{lem 16}. (If $n=1,$ then $s=m$ so that $t=1.$ Now, the presentation of the group tells us that the group is cyclic of order $m$ in this case.) Now, if $G=\mathfrak{T}^* \times G_{m,r}$ is a general $T$-group, then the only possible such groups are $\mathfrak{T}^*$ and $\mathfrak{T}^* \times C_5.$ (If $n=2$ so that $G_{m,r}$ is not cyclic, then $|G_{m,r}|=2m,$ and hence, we have $\gcd(|G_{m,r}|,6) \ne 1.$) In the latter case, we have $\gamma_5 =4$ which contradicts the condition of Theorem \ref{thm 12}. Also, clearly, $\mathfrak{T}^*$ satisfies the condition of Theorem \ref{thm 12}, and hence, we get (4). Finally, the other possibilities for $G$ are $\mathfrak{O}^*$ and $\mathfrak{I}^*.$ If $G=\mathfrak{O}^*,$ then $|G|=48$, and in this case, $\sqrt{2} \in K$ by Theorem \ref{exc gps}. If $G=\mathfrak{I}^*,$ then $|G|=120,$ and in this case, $\sqrt{5} \in K$ by Theorem \ref{exc gps}. \\
\indent This completes the proof.
\end{proof}

By a similar argument as above, we can deal with the other case.
\begin{theorem}\label{deg4 case}
Let $D$ be a division algebra of degree $4$ over its center $K$, where $K$ is an imaginary quadratic field. If a finite group $G$ (of even order) can be embedded in $D^{\times}$, then $G$ is one of the following groups: \\
(1) $C_2, C_4, C_6, C_8, C_{10}, C_{12}, C_{16}, C_{20}, C_{24}, C_{30}$; \\
(2) $\textrm{Dic}_{12}, \textrm{Dic}_{20}, C_5 \rtimes C_8, C_3 \rtimes C_{16}, \textrm{Dic}_{60}$;\\
(3) $C_5 \rtimes C_{16}$. 
\end{theorem}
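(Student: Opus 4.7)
The plan is to mirror the proof of Theorem \ref{deg2 case}, adapting the numerical constraints to the fact that $D$ now has degree $4$ over its center rather than $2$. Concretely, $\dim_{\Q}D=32$, every maximal subfield of $D$ has $\Q$-dimension $8$, and if $G=G_{m,r}\subset D^{\times}$, then by \cite[\S7]{1} one has $n\mid\deg_{K}D=4$. Hence any element of $G$ of order $m$ generates $\Q(\zeta_{m})\subset D$, forcing $\varphi(m)\mid 8$ and $m\in\{2,3,4,5,6,8,10,12,15,16,20,24,30\}$, while the Amitsur parameter $n$ lies in $\{1,2,4\}$.

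I would then proceed case by case along Amitsur's classification (Theorem \ref{thm 12}). The cyclic case, together with $|G|$ even, yields exactly the orders $\{2,4,6,8,10,12,16,20,24,30\}$ and gives item (1). For $G=G_{m,r}$ non-cyclic with $n=2$ and (C1) holding, Lemma \ref{lem 15} produces the candidates $\textrm{Dic}_{12}$, $\textrm{Dic}_{20}$, $C_{5}\rtimes C_{8}$, $C_{3}\rtimes C_{16}$, $\textrm{Dic}_{60}$ listed in (2). The genuinely new case is $n=4$, handled by Lemma \ref{addlem 15}, which produces $C_{5}\rtimes C_{16}$ in (3). For the exceptional types of Theorem \ref{thm 12} --- the $T$-groups $\mathfrak{T}^{*}\times G_{m,r}$ and the binary polyhedral groups $\mathfrak{O}^{*}$, $\mathfrak{I}^{*}$ --- one adapts Theorem \ref{exc gps}: an imaginary quadratic field $K$ cannot contain the real subfields $\Q(\sqrt{2})$ or $\Q(\sqrt{5})$ needed for $\mathfrak{O}^{*}$, $\mathfrak{I}^{*}$, and the $T$-group analysis from the proof of Theorem \ref{deg2 case} carries over to rule out $\mathfrak{T}^{*}\times G_{m,r}$.

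The main obstacle, and the genuinely new ingredient relative to Theorem \ref{deg2 case}, is showing that the (C2)-groups of Lemma \ref{lem 16} (the quaternion group $Q_{8}$ and the dicyclic groups $\textrm{Dic}_{16}$, $\textrm{Dic}_{24}$, $\textrm{Dic}_{32}$, $\textrm{Dic}_{40}$, $\textrm{Dic}_{48}$) cannot be embedded in $D$. Since each of these groups contains $Q_{8}$, it suffices to obstruct $Q_{8}\hookrightarrow D^{\times}$. A natural approach is via the Double Centralizer Theorem: a faithful embedding of $Q_{8}$ would force the Hamilton quaternion factor $D_{2,\infty}$ of the Wedderburn decomposition of $\Q[Q_{8}]$ to embed into $D$ as a $\Q$-subalgebra, and hence the $K$-subalgebra $D_{2,\infty}\otimes_{\Q}K$ to sit inside $D$. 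Its centralizer would then be a complementary quaternion $K$-algebra, presenting $D$ as a biquaternion algebra whose Brauer class has period dividing $2$. Invoking the local-invariant formula of Proposition \ref{local inv} --- where the degree $d$ is by definition the least common denominator of the local invariants, so period equals index --- one concludes that $D$ must in fact have period $4$, yielding the desired contradiction. This period-versus-index obstruction, inapplicable in the degree-$2$ case, is the delicate point that distinguishes the degree-$4$ argument.
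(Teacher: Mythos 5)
Your proposal is correct and follows the paper's overall skeleton (Amitsur's classification via Theorem \ref{thm 12}, Lemmas \ref{lem 15} and \ref{addlem 15} for the metacyclic cases, Theorem \ref{exc gps} for $\mathfrak{O}^*$ and $\mathfrak{I}^*$), but it diverges at the one genuinely delicate point, namely the exclusion of the groups with (generalized) quaternion Sylow $2$-subgroups --- $Q_8$, the dicyclic groups of Lemma \ref{lem 16}, and the $T$-groups. The paper disposes of all of these in one line by citing Amitsur's Theorems 2 and 9 \cite{1}: over a number field center, a quaternion Sylow $2$-subgroup forces $D \cong D_{2,\infty}\otimes_{\Q}K$, which has degree $2$ over $K$, so in the degree-$4$ case every Sylow subgroup of $G$ must be cyclic and only the types covered by Lemmas \ref{lem 15} and \ref{addlem 15} survive. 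You instead obstruct $Q_8$ directly via the double centralizer theorem and the period--index equality over number fields; this is exactly the argument the paper itself deploys in case (2) of the proof of Lemma \ref{dic not lem0}, so it is certainly valid, and it has the merit of being self-contained modulo the Albert--Brauer--Hasse--Noether theorem rather than quoting Amitsur's Theorem 9 as a black box. Two small repairs are needed: the period--index equality should be attributed to the general theorem for central simple algebras over number fields (e.g.\ \cite[\S 1.5]{4}, as in Lemma \ref{dic not lem0}), not to Proposition \ref{local inv}, which is stated only for endomorphism algebras of abelian varieties, whereas the $D$ of Theorem \ref{deg4 case} is an abstract division algebra; and the ``$T$-group analysis from Theorem \ref{deg2 case}'' does not by itself rule out $\mathfrak{T}^*$ (that group survives in the degree-$2$ case), so you should say explicitly that $Q_8 \leq \mathfrak{T}^*$ and hence your $Q_8$ obstruction is what eliminates $\mathfrak{T}^*$ and with it all the groups of type (3) in Theorem \ref{thm 12}.
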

\begin{proof}
Since $K$ is an algebraic number field, it follows from \cite[Theorems 2 and 9]{1} that all Sylow subgroups of $G$ are cyclic\footnote{In other words, $G$ is of type (2A) in the sense of \cite[Theorem 2]{1}.}. Now, since the Sylow $2$-subgroup of any $\mathfrak{T}^* \times G_{m,r}$ of Theorem \ref{thm 12}-(3) is $Q_8,$ we can exclude these groups. If $G=\mathfrak{O}^*$ (resp.\ $G=\mathfrak{I}^*$), then we have that $\Q(\sqrt{2}) \subseteq K$ (resp.\ $\Q(\sqrt{5}) \subseteq K$) by Theorem \ref{exc gps}, which contradicts the fact that $K$ is an imaginary quadratic field. If $G$ is a cyclic group of order $d,$ then we have $\varphi(d)~|~8$, and hence, by a similar argument as in the proof of Theorem \ref{deg2 case}, we obtain (1). Finally, if $G:=G_{m,r}$ for some relatively prime integers $m,r,$ then we obtain (2) and (3) by Lemmas \ref{lem 15} and \ref{addlem 15}. \\
\indent This completes the proof.  
\end{proof}

Regarding our goal of this paper, the above theorems have a nice consequence.
\begin{corollary}\label{cor 19}
  Let $X$ be a simple abelian variety of dimension $4$ over a finite field $k.$ Let $G$ be a finite subgroup of the multiplicative subgroup of $\textrm{End}_k^0(X).$ Then $G$ is one of the following groups: \\
(1) $C_2, C_4, C_6, C_8, C_{10}, C_{12}, C_{16}, C_{20}, C_{24}, C_{30}$; \\
(2) $Q_8, \textrm{Dic}_{12}, \textrm{Dic}_{16}, \textrm{Dic}_{20}, \textrm{Dic}_{24}, \textrm{Dic}_{32}, \textrm{Dic}_{40}, \textrm{Dic}_{48}, \textrm{Dic}_{60}$; \\
(3) $C_5 \rtimes C_8, C_3 \rtimes C_{16}$;\\
(4) $C_5 \rtimes C_{16}$;\\ 
(5) $\mathfrak{T}^*$; \\
(6) $\mathfrak{O}^*$; \\
(7) $\mathfrak{I}^*.$
\end{corollary}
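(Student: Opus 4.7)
The plan is to combine Lemma \ref{poss end alg} with Theorems \ref{deg2 case} and \ref{deg4 case}, handling the three possibilities for $D := \textrm{End}_k^0(X)$ separately and then taking the union of the resulting lists. By Lemma \ref{poss end alg}, exactly one of the following holds: (I) $D$ is a central simple division algebra of degree $4$ over an imaginary quadratic field; (II) $D$ is a central simple division algebra of degree $2$ over a quartic CM-field; or (III) $D$ is a CM-field of degree $8$. The subgroup $G \subseteq D^{\times}$ is a finite subgroup of the multiplicative group of a division algebra, so Theorem \ref{thm 12} already restricts $G$ to the Amitsur list, and in each of the three cases we have further structural information about $D$.

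In case (I) I would invoke Theorem \ref{deg4 case} directly: it tells us that $G$ must lie in items (1)--(3) of the present corollary. In case (II) I would invoke Theorem \ref{deg2 case}, producing items (1)--(3) together with the exceptional groups $\mathfrak{T}^*, \mathfrak{O}^*, \mathfrak{I}^*$ in items (5)--(7). In case (III), $D$ is a field, so any finite subgroup of $D^{\times}$ is cyclic; if $G = C_d$, then $\zeta_d \in D$ forces $\varphi(d) \mid [D:\Q] = 8$, hence $d \in \{1,2,3,4,5,6,7,8,9,10,12,14,15,16,18,20,24,30\}$. After restricting to $G$ of even order (which is automatic in our automorphism group setting, since $-1$ always lies in $G$) and noting that odd values of $d$ yield the same cyclic group as $d' = 2d$ when $-1 \in G$, one keeps precisely the cyclic groups of order $2,4,6,8,10,12,16,20,24,30$, which are those listed in item (1). (The values $d = 14, 18$ are excluded because then the character $\zeta_d$ would generate a subfield of degree $\varphi(d) = 6$ of $D$, but then the CM-field $\Q(\zeta_d)$ does not sit inside a CM-field of degree $8$ in a way compatible with also having $-1$ realized non-trivially; more simply, the cyclotomic field $\Q(\zeta_d)$ has degree $6$, so $\Q(\zeta_d) \not\subseteq D$ of degree $8$ would require $6 \mid 8$.)

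Unioning the three lists yields exactly items (1)--(7) of the corollary. The only subtlety, and the main potential obstacle, is the cyclic case in (III): one has to verify that the $\varphi(d) \mid 8$ constraint, combined with the fact that any CM-field of degree $8$ containing $\zeta_d$ must contain the cyclotomic field $\Q(\zeta_d)$ (whose degree is $\varphi(d)$), correctly reproduces the list in (1) and does not yield extra cyclic groups such as $C_{14}$ or $C_{18}$. This is a routine check against the values of Euler's totient, so the argument reduces to bookkeeping once the three cases of Lemma \ref{poss end alg} are set up and the two main embedding theorems of this section are applied.
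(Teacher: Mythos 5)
Your proposal is correct and is essentially the paper's own proof: the corollary is deduced by combining Lemma \ref{poss end alg} with Theorems \ref{deg2 case} and \ref{deg4 case}, with the degree-$8$ CM-field case contributing only cyclic groups $C_d$ with $\varphi(d)\mid 8$, all of which already appear in item (1). (One small internal slip: $d=7,9,14,18$ should never have entered your candidate list in case (III), since $\varphi(d)=6$ does not divide $8$ for these values; your later exclusion of $14$ and $18$ for exactly this reason shows the final list is nonetheless right.)
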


\begin{proof}
  This follows from Lemma \ref{poss end alg} and Theorems \ref{deg2 case}, \ref{deg4 case}.
\end{proof}

\section{Main Result}\label{main}
In this section, we give a classification of finite groups that can be realized as the automorphism group of a simple polarized abelian variety of dimension $4$ over a finite field which is maximal in the following (slightly more general) sense: let $g \geq 1$ be an integer.

\begin{definition}[{\cite[Definition 6.1]{8}}]\label{def 20}
  Let $X$ be an abelian variety of dimension $g$ over a field $k$, and let $G$ be a finite group. Suppose that the following two conditions hold: \\
\indent (i) there exists an abelian variety $X^{\prime}$ over $k$ that is $k$-isogenous to $X$ with a polarization $\mathcal{L}$ such that $G=\textrm{Aut}_k(X^{\prime},\mathcal{L}),$ and \\
\indent (ii) there is no finite group $H$ such that $G$ is isomorphic to a proper subgroup of $H$ and $H=\textrm{Aut}_k (Y,\mathcal{M})$ for some abelian variety $Y$ over $k$ that is $k$-isogenous to $X$ with a polarization $\mathcal{M}.$ \\
\indent In this case, $G$ is said to be \emph{realizable maximally (or maximal, in short) in the isogeny class of $X$} as the full automorphism group of a polarized abelian variety of dimension $g$ over $k$.
\end{definition}
In our case, we take $g=4,$ $k$ a finite field, and $X$ to be simple. \\

Now, to introduce our main result, we need the following lemmas, which are obtained by examining some of the situations of Corollary \ref{cor 19} further.
\begin{lemma}\label{dic not lem0}
Let $G=\textrm{Dic}_{20}.$ Then there exists no simple abelian variety $X$ of dimension $4$ over a finite field $k$ such that $G$ is a finite subgroup of the multiplicative subgroup of $\textrm{End}_k^0(X).$
\end{lemma}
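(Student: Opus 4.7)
The plan is to argue by contradiction: assume $G = \textrm{Dic}_{20}$ embeds in $D^\times$ where $D := \textrm{End}_k^0(X)$, and rule out each of the three types of $D$ listed in Lemma \ref{poss end alg}. The key computational input is the internal structure of the $\Q$-subalgebra $\Q[G] \subseteq D$, which will turn out to be a quaternion algebra over a \emph{real} quadratic field, creating an obstruction for the center of $D$.

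First I would pin down $\Q[G]$ explicitly. Writing $G = \langle a, b \mid a^{10}=1,\ b^2 = a^5,\ bab^{-1} = a^{-1}\rangle$, the subring $\Q[a] \subseteq D$ is a field isomorphic to $\Q(\zeta_{10}) = \Q(\zeta_5)$ of $\Q$-dimension $4$. Since $ba \neq ab$, we have $b \notin \Q[a]$, so $\Q[G] = \Q[a] \oplus \Q[a]\cdot b$ has $\Q$-dimension $8$. The relations $b^2 = a^5 = \zeta_{10}^5 = -1$ and $b\alpha b^{-1} = \sigma(\alpha)$, where $\sigma\colon \zeta_5 \mapsto \zeta_5^{-1}$ is complex conjugation with fixed field $\Q(\sqrt{5})$, identify $\Q[G]$ with the cyclic algebra $(\Q(\zeta_5)/\Q(\sqrt{5}), \sigma, -1)$, a quaternion algebra with $Z(\Q[G]) = \Q(\sqrt{5})$ totally real.

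With this in hand, cases (2) and (3) of Lemma \ref{poss end alg} dispatch immediately. Case (3) fails because $D$ would be commutative but $G$ is not; case (2) fails because $\dim_\Q D = 8 = \dim_\Q \Q[G]$ forces $\Q[G] = D$, hence $Z(D) = \Q(\sqrt{5})$, contradicting the requirement that $Z(D)$ be a quartic CM-field.

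The main obstacle is Case (1), where $\dim_\Q D = 16$ and $K := Z(D)$ is imaginary quadratic, so $\Q[G]$ occupies only half of $D$. Here I would enlarge $\Q[G]$ by the center: since $K \subseteq Z(D)$ commutes with $\Q[G]$, and $K \cap \Q(\sqrt{5}) = \Q$ (one field is imaginary, the other real), the natural map $K \otimes_\Q \Q[G] \to K \cdot \Q[G] \subseteq D$ is a homomorphism from a simple algebra, hence injective. A dimension count then yields $K \cdot \Q[G] = D$. But the center of $K \otimes_\Q \Q[G]$ equals $K \otimes_\Q \Q(\sqrt{5}) = K(\sqrt{5})$, of degree $4$ over $\Q$, whereas $Z(D) = K$ has degree $2$. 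This contradiction completes the proof.
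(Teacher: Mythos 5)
Your identification of $\Q[G]$ as the cyclic algebra $(\Q(\zeta_5)/\Q(\sqrt 5),\sigma,-1)$, a quaternion algebra with center $\Q(\sqrt 5)$, is correct and is essentially the paper's starting observation that $D_{2,\infty}\otimes_{\Q}\Q(\sqrt 5)\subset D$. But the case analysis rests on wrong dimensions, and this is fatal. In case (2) of Lemma \ref{poss end alg}, $D$ has degree $2$ over a \emph{quartic} CM-field $K$, so $\dim_{\Q}D=4\cdot 4=16$, not $8$; and in case (1), $D$ has degree $4$ over an imaginary quadratic field, so $\dim_{\Q}D=16\cdot 2=32$, not $16$. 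Consequently your dismissal of case (2) via ``$\Q[G]=D$, hence $Z(D)=\Q(\sqrt 5)$'' does not go through: $\Q[G]$ is only half of $D$. Worse, a quartic CM-field $K$ can perfectly well contain the real quadratic field $\Q(\sqrt 5)$ (e.g.\ $K=\Q(\zeta_5)$), and in that situation $D\cong \Q[G]\otimes_{\Q(\sqrt 5)}K\cong\bigl(\tfrac{-1,-1}{K}\bigr)$ is consistent with $Z(D)=K$; no contradiction comes from the center. The actual obstruction here is arithmetic: since $2$ is inert in $\Q(\sqrt 5)$, every place of $K$ above $2$ has even local degree over $\Q_2$, and $K$ being CM has no real places, so $\bigl(\tfrac{-1,-1}{K}\bigr)$ has trivial local invariants everywhere and is therefore split --- contradicting that $D$ is a division algebra. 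This local--global splitting argument (which is how the paper kills this case) is entirely absent from your proof, and it is the hard part of the lemma.

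The same dimension error undermines your case (1). With $\dim_{\Q}D=32$, the subalgebra $K\cdot\Q[G]\cong K\otimes_{\Q}\Q[G]$ has $\Q$-dimension $16$ and is a \emph{proper} subalgebra of $D$, so you cannot conclude $Z(D)=K(\sqrt 5)$; comparing centers yields nothing, since $D$ of degree $4$ over $K$ may certainly contain the quadratic extension $K(\sqrt 5)$ of its center. Indeed, working out the double centralizer here only shows $\mathrm{ind}(D\otimes_K K(\sqrt 5))=2$, which is compatible with $\mathrm{ind}(D)=4$. One needs a genuinely different input: the paper uses that $D_{2,\infty}\otimes_{\Q}K$ is a \emph{$K$-central} quaternion subalgebra of $D$, so the double centralizer theorem writes $D\cong B\otimes_K C$ with both factors of period at most $2$, contradicting that $D$, being a degree-$4$ division algebra over a number field, has period $4$. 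Your proof as written establishes only case (3) (commutativity) correctly.
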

\begin{proof}
Let $G=\textrm{Dic}_{20}$ and suppose on the contrary that there exists such an abelian variety $X$ of dimension $4$ over some finite field $k.$ Let $D=\textrm{End}_k^0(X).$ Note that since $G \leq D^{\times},$ we have $D_{2,\infty} =\left(\frac{-1,-1}{\Q} \right) \subset D_{2,\infty} \otimes_{\Q} \Q(\sqrt{5}) \subset D$. % and $\Q(\zeta_{10}) \subset D.$
%we have $D_{3,\infty}=\left(\frac{-1,-3}{\Q}\right) \subset D.$ 
Also, by Lemma \ref{poss end alg} and Theorems \ref{deg2 case}, \ref{deg4 case}, we only need to consider the following two cases: \\
\indent (1) $D$ is a quaternion division algebra over $K$ where $K:=\Q(\pi_X)$ is a quartic CM-field. Then we have $D_{2,\infty} \otimes_{\Q} K =D$ (by dimension counting). Also, we note that $L:=\Q(\zeta_{10}) \subset D$ because $C_{10} \leq G \leq D^{\times}.$ Let $B=C_{D}(L)$ (resp.\ $C=C_{D}(B)$) be the centralizer of $L$ (resp.\ $B$) in $D.$ By definition, $B$ (resp.\ $C$) contains both $K$ and $L.$ In particular, $B$ (resp.\ $C$) is a $K$-subalgebra of $D$ containing $L.$ By the double centralizer theorem, we have that $[B:K]\cdot [C:K]=4.$ If either $[B:K]=1$ or $[B:K]=4$ (so that $[C:K]=1$), then we can see that $K=L.$ (For this case, we refer to subcase (i) below.) If $[B:K]=[C:K]=2,$ then we get that $B$ is commutative, and hence, $B$ is a maximal subfield of $D$ with $[B:\Q]=8.$ Now, suppose that $K \cap L = \Q.$ Let $M=K.L$ be the compositum of $K$ and $L$ in $B$. In particular, we have $M \subseteq B.$ But then since $K \cap L = \Q,$ we also have that $[M:\Q]=[K:\Q]\cdot [L:\Q]=16,$ which is a contradiction. Consequently, we only need to consider the following two cases further: \\
%consider the center $Z(B)$ of $B$. By assumption, we have that either $Z(B)=K$ or $Z(B)=B$ (i.e.\ $B$ is commutative). In the first case, we get that $K=L,$ again. 
%If we are in the second case, then 
(i) If $K = L = \Q(\zeta_{10}),$ then since there is a unique prime $\nu$ of $K$ lying over $2$ so that $2~|~[K_{\nu}:\Q_2]=4$, and $\Q(\zeta_{10})$ is a CM-field, it follows that $D$ splits at all primes of $K$, which contradicts the fact that $D$ is a division algebra.   \\
(ii) If $K \cap L$ is a quadratic number field, then we have $K \cap L = \Q(\sqrt{5}).$ Note that $2$ is inert in $\Q(\sqrt{5})$. Then since $2 ~|~ [K_{\nu}:\Q_2]$ for any prime $\nu$ of $K$ lying over $2,$ and $K$ is a CM-field, it follows again that $D$ splits at all primes of $K,$ which is a contradiction. \\
\indent Thus we can see that this case cannot occur. \\
\indent (2) $D$ is a central simple division algebra of degree $4$ over $K$ where $K:=\Q(\pi_X)$ is an imaginary quadratic field. Then let $B=D_{2,\infty} \otimes_{\Q} K$, and let $C=C_{D}(B)$ be the centralizer of $B$ in $D.$ In particular, the center of both $B$ and $C$ is $K.$ By the double centralizer theorem, we have $[C:K]=4$ and $B \otimes_K C \cong D.$ Then since $D$ has period $4$ (being a central simple division algebra of degree $4$ over a number field $K$) while the period of $B \otimes_K C$ is at most $2$ (\cite[$\S$1.5]{4}), this is a contradiction. Thus we can see that this case cannot occur. \\
\indent From (1) and (2), the desired result follows. \\
%\indent For $G=\textrm{Dic}_{20},$ we can proceed in a similar fashion with the observation that $D_{2,\infty} =\left(\frac{-1,-1}{\Q} \right) \subset D_{2,\infty} \otimes_{\Q} \Q(\sqrt{5}) \subset D$ and $\Q(\zeta_{10}) \subset D.$ \\
\indent This completes the proof.
\end{proof}

By a similar argument, we obtain other useful results.
\begin{lemma}\label{dic not lem}
Let $G =\textrm{Dic}_{16}$ or $\textrm{Dic}_{24}.$ Then there exists no simple abelian variety $X$ of dimension $4$ over a finite field $k$ such that $G$ is a finite subgroup of the multiplicative subgroup of $\textrm{End}_k^0(X).$
\end{lemma}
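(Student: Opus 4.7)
The plan is to mirror the proof of Lemma~\ref{dic not lem0}. Both $G = \textrm{Dic}_{16}$ and $G = \textrm{Dic}_{24}$ contain a copy of $Q_8$: in the presentation $a^m = 1,\ b^2 = a^{m/2},\ bab^{-1} = a^{-1}$ of $G$ with $m = 8$ or $m = 12$ respectively, the subgroup $\langle a^{m/4}, b\rangle$ is isomorphic to $Q_8$. Consequently the hypothetical embedding $G \hookrightarrow D^\times$, where $D := \textrm{End}_k^0(X)$, forces $D_{2,\infty} = \left(\frac{-1,-1}{\Q}\right) \hookrightarrow D$ as a $\Q$-subalgebra. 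Moreover, $\textrm{Dic}_{16}$ contains $C_8$ and $\textrm{Dic}_{24}$ contains $C_{12}$, so $L := \Q(\zeta_m) \subseteq D$ with $m = 8$ or $m = 12$, respectively. The CM-field case (3) of Lemma~\ref{poss end alg} is immediately ruled out since $G$ is non-abelian.

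For case (1) of Lemma~\ref{poss end alg}, where $D$ is a quaternion algebra over a quartic CM-field $K = \Q(\pi_X)$, I would apply the double centralizer theorem to $L \subseteq D$ exactly as in the proof of Lemma~\ref{dic not lem0}: with $B = C_D(L)$ and $C = C_D(B)$, the identity $[B:K]\cdot[C:K] = 4$ together with a dimension count ruling out $K \cap L = \Q$ gives that either $K = L$ or $K \cap L$ is a quadratic subfield. The quadratic subfields of $\Q(\zeta_8)$ are $\Q(i), \Q(\sqrt{2}), \Q(\sqrt{-2})$, each ramified at $2$; those of $\Q(\zeta_{12})$ are $\Q(i), \Q(\sqrt{3}), \Q(\sqrt{-3})$, the first two ramified at $2$ and the last inert at $2$ with residue degree $2$. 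Moreover, in $\Q(\zeta_8)$ the prime $2$ is totally ramified with $[K_\nu : \Q_2] = 4$, and in $\Q(\zeta_{12})$ it has $(e,f,g) = (2,2,1)$ with $[K_\nu : \Q_2] = 4$. So in every subcase $2 \mid [K_\nu : \Q_2]$ for each prime $\nu$ of $K$ lying above $2$; combined with the fact that the CM-field $K$ has no real places, Proposition~\ref{local inv} then implies that every local invariant of $D_{2,\infty} \otimes_\Q K$ vanishes. Hence $D_{2,\infty} \otimes_\Q K$ splits and equals $M_2(K)$; since $[D_{2,\infty} \otimes_\Q K : \Q] = 16 = [D:\Q]$, this forces $D = M_2(K)$, contradicting the fact that $D$ is a division algebra.

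For case (2) of Lemma~\ref{poss end alg}, where $D$ has degree $4$ over an imaginary quadratic $K$, the period argument of Lemma~\ref{dic not lem0} applies without change: setting $B := D_{2,\infty} \otimes_\Q K$ and $C := C_D(B)$, the double centralizer theorem yields $D \cong B \otimes_K C$ with both $B$ and $C$ central simple of period at most $2$ over $K$, while $D$ has period $4$ as a degree-$4$ division algebra over a number field, a contradiction. The main obstacle of the proof is the case-by-case verification in case (1) that, whichever quadratic subfield arises as $K \cap L$, the prime $2$ is always either ramified or inert there and never split, so that $[K_\nu : \Q_2]$ is forced to be even and the local invariants of $D_{2,\infty}$ die after base change to $K$.
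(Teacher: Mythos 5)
Your proof is correct and takes essentially the same route as the paper, which after noting $C_8\le \textrm{Dic}_{16}$ (resp.\ $C_{12}\le \textrm{Dic}_{24}$) simply refers back to case (1) of Lemma~\ref{dic not lem0}; you have usefully made explicit the verification that $2$ is non-split in every quadratic subfield of $\Q(\zeta_8)$ and $\Q(\zeta_{12})$, so that $2\mid [K_\nu:\Q_2]$ in all subcases. The only cosmetic difference is that the paper eliminates the degree-$4$-over-imaginary-quadratic case at once by citing Amitsur's Theorem~9 (the non-cyclic Sylow $2$-subgroup forces $D=D_{2,\infty}\otimes_\Q K$ with $K$ a quartic CM-field), whereas you rerun the period/double-centralizer argument of Lemma~\ref{dic not lem0}, case (2) --- both are valid.
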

\begin{proof}
Let $G=\textrm{Dic}_{16}$ (resp.\ $G=\textrm{Dic}_{24}$) and suppose on the contrary that there exists such an abelian variety of dimension $4$ over some finite field $k.$ Since the 2-Sylow subgroup of $G$ is a generalized quaternion group of order $16$ (resp.\ a quaternion group of order $8$), it follows from Lemma \ref{poss end alg} and \cite[Theorem 9]{1} that $D:=\textrm{End}_k^0(X)=D_{2,\infty}\otimes_{\Q} K$ is a quaternion division algebra over $K$, where $K:=\Q(\pi_X)$ is a quartic CM-field. Also, since $C_{8} \leq \textrm{Dic}_{16}$ (resp.\ $C_{12} \leq \textrm{Dic}_{24}$), we have that $L:=\Q(\zeta_{8}) \subset D$ (resp. $L:=\Q(\zeta_{12})$). Then we can proceed as in the proof of Lemma \ref{dic not lem0} for the case (1) to see that this cannot occur. \\
\indent This completes the proof.
\end{proof}

\begin{lemma}\label{Q8 not max lem}
Let $G = Q_8$ or $ \mathfrak{T}^*.$ Then there exists no simple abelian variety $X$ of dimension $4$ over a finite field $k$ such that $G$ is a finite subgroup of the multiplicative subgroup of $\textrm{End}_k^0(X).$
\end{lemma}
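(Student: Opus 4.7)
My plan is to follow the pattern of Lemmas \ref{dic not lem0} and \ref{dic not lem}---first pinning down the structure of $D := \textrm{End}_k^0(X)$ via Amitsur's theorem, then matching local invariants via Proposition \ref{local inv}---but to add a new Kronecker-theorem step at the end.

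Since the $2$-Sylow subgroup of $G$ is $Q_8$ in both cases (and hence a generalized quaternion group), Lemma \ref{poss end alg} together with \cite[Theorem 9]{1} forces $D = D_{2,\infty} \otimes_\Q K$ to be a quaternion division algebra over $K := \Q(\pi_X)$, a quartic CM-field. (The CM-field option of Lemma \ref{poss end alg} is commutative and hence excluded; the degree-$4$ central simple algebra over an imaginary quadratic option is ruled out exactly as in Lemma \ref{dic not lem0}(2) by the period argument: $D$ would decompose as a tensor product of two quaternion $K$-algebras, forcing its period to divide $2$, while as a degree-$4$ division algebra over a number field $D$ has period $4$.)

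Next, I match local invariants using Proposition \ref{local inv}. The invariants of $D_{2,\infty} \otimes K$ vanish off primes above $2$, so $p = 2$ and $q = 2^a$, with $\textrm{inv}_v \equiv [K_v : \Q_2]/2 \pmod{1}$ at $v \mid 2$. Using the tower $K / K_0 / \Q$ with $K_0$ real quadratic and $e_{v/w} f_{v/w} \le 2$ at each step, $[K_v : \Q_2] \in \{1, 2, 4\}$, so ``$[K_v : \Q_2]$ odd'' is equivalent to $(e_v, f_v) = (1, 1)$, and the division condition on $D$ forces at least one such prime. Combining the invariant constraint $[K_v : \Q_2](s_v - 1/2) \in \Z$ (with $s_v := \ord_v(\pi_X)/\ord_v(q)$) and the Weil-number identity $s_v + s_{\bar v} = 1$, and distinguishing the self-conjugate primes from those paired by the CM-involution, I will check that $s_v = 1/2$ at every $v \mid 2$. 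In particular $a$ is even; setting $u := \pi_X / 2^{a/2}$, we find $\ord_v(u) = 0$ at every $v \mid 2$, integrality at the remaining finite places, and $u \bar u = 1$, so $u \in \cO_K^\times$.

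Because $K$ is a CM-field, $|\iota(u)|^2 = \iota(u \bar u) = 1$ in every complex embedding, so all conjugates of the algebraic integer $u$ have absolute value $1$. Kronecker's theorem then forces $u$ to be a root of unity, and $\Q(\pi_X) = \Q(u)$ equals the cyclotomic field $\Q(\zeta_n)$ for some $n$ with $\varphi(n) = 4$, i.e.\ $n \in \{5, 8, 10, 12\}$. However, $2$ is inert in $\Q(\zeta_5) = \Q(\zeta_{10})$, totally ramified in $\Q(\zeta_8)$, and has $(e_v, f_v) = (2, 2)$ in $\Q(\zeta_{12})$; none of these quartic cyclotomic CM-fields admits a prime above $2$ with $(e_v, f_v) = (1, 1)$, contradicting the conclusion of the previous paragraph. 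Hence no such $X$ exists. The main obstacle is the invariant-to-slope analysis in the middle step, which requires carefully handling the CM-involution action on primes above $2$; once that is in place, Kronecker's theorem and the short list of quartic cyclotomic CM-fields quickly close the argument.
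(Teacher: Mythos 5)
Your proposal is correct and follows essentially the same route as the paper: reduce to $D=D_{2,\infty}\otimes_{\Q}K$ with $K$ a quartic CM-field (ruling out the degree-$4$-over-imaginary-quadratic case by the period argument, as in the paper's Lemma~\ref{dic not lem0}(2)), match local invariants at the primes above $2$ to force $\ord_\nu(\pi_X)/\ord_\nu(q)=1/2$ everywhere, conclude that $K$ is a quartic cyclotomic field, and contradict the splitting behaviour of $2$ there. The only difference is cosmetic but welcome: where the paper invokes \cite{Gon(1998)} and \cite{Xue(2016)} to identify $K$ with $\Q(\zeta_n)$, you obtain the same conclusion self-containedly via Kronecker's theorem applied to the unit $\pi_X/2^{a/2}$.
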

\begin{proof}
Let $G=Q_8$ and suppose on the contrary that there exists such an abelian variety of dimension $4$ over some finite field $k:=\F_q.$ Since the 2-Sylow subgroup of $G$ is a quaternion group of order $8$, we can see that $D:=\textrm{End}_k^0(X)=D_{2,\infty}\otimes_{\Q} K$ is a quaternion division algebra over $K$, where $K:=\Q(\pi_X)$ is a quartic CM-field, as before. This implies that $q=2^a$ for some $a \geq 1.$ We examine the ramification behavior of the prime $2$ in $K,$ together with that of the quaternion division algebra $D_{2,\infty} \otimes_{\Q} K.$ First of all, since $D_{2,\infty} \otimes_{\Q} K$ must be a division algebra in our case, we know that the number of primes in $K$ above $2$ is at least $3$. Thus we consider the following two cases:   \\
\indent (1) Suppose that there are three primes $\nu_1, \nu_2, \nu_3$ of $K$ lying above $2.$ Then there is one prime, say $\nu_3$, at which $D_{2,\infty}$ splits, and we consider the following two subcases: \\
(i) $2 =\nu_1 \nu_2 \nu_3^2$ in $K$ so that we have $\pi_X \cdot \overline{\pi_X} = q = \nu_1^a \nu_2^a \nu_3^{2a}$ in $K.$ Assume that $\pi_X = \nu_1^i \nu_2^{a-i} \nu_3^j$ for some $0 \leq i \leq a$ and $0 \leq j \leq 2a$. In particular, the local invariants of $D$ at those primes are given by $i/a, (a-i)/a,$ and $2j/a.$ Then since we know that $D$ is a quaternion algebra over $K,$ which is ramified precisely at $\nu_1$ and $\nu_2,$ it follows from Proposition \ref{local inv}-(b) that $a$ must be even, $i=a/2$ and $j=a$. Then we get that $\pi_X = \overline{\pi_X}$, and it follows from \cite[Proposition 3.2]{Gon(1998)} and \cite[Lemma 2.1]{Xue(2016)} that $K=\Q(\zeta_n)$ for $n \in \{5, 8, 12\},$ which contradicts the ramification behavior of $2$ in $K.$ Thus this case cannot occur. \\
(ii) $2 =\nu_1 \nu_2 \nu_3$ in $K$ so that we have $\pi_X \cdot \overline{\pi_X} = q = \nu_1^a \nu_2^a \nu_3^a$ in $K.$ Then we can proceed as in the case (i) to see that this case cannot occur, either.         \\
\indent (2) Suppose that $2$ splits completely in $K$ i.e.\ there are four primes $\nu_1, \nu_2, \nu_3, \nu_4$ of $K$ lying above $2.$ (In particular, $D_{2,\infty} \otimes_{\Q} K$ is ramified at those four primes of $K.$) Then we have $2=\nu_1  \nu_2 \nu_3 \nu_4$ in $K$ so that $\pi_X \cdot \overline{\pi_X} = q = \nu_1^a \nu_2^a \nu_3^a \nu_4^a.$ Assume that $\pi_X = \nu_1^i \nu_2^{a-i} \nu_3^j \nu_4^{a-j}$ for some $0 \leq i,j \leq a.$ In particular, the local invariants of $D$ at those primes are given by $i/a, (a-i)/a, j/a$ and $(a-j)/a.$ Then since we know that $D$ is a quaternion algebra over $K$, which is ramified at $\nu_1, \nu_2, \nu_3,$ and $\nu_4,$ it follows from Proposition \ref{local inv}-(b) that $a$ must be even and $i=j=a/2$. Then we can proceed as in the case (1)-(i) to see that this case cannot occur. \\
\indent Hence the desired result follows from (1) and (2). \\
For $G=\mathfrak{T}^*$, we can use a similar argument as above, or use the fact that $Q_8 \leq \mathfrak{T}^*.$ \\
%Finally, for $G=\textrm{Dic}_{12},$ \\
%In particular, we have that $H:=\mathfrak{T}^*$ is a finite subgroup of $D^{\times}$ containing $G$ properly. \\
\indent This completes the proof.
\end{proof}

\begin{lemma}\label{O,I not lem}
Let $G = \mathfrak{O}^*$ or $\mathfrak{I}^*.$ Then there exists no simple abelian variety $X$ of dimension $4$ over a finite field $k$ such that $G$ is a finite subgroup of the multiplicative subgroup of $\textrm{End}_k^0(X).$
\end{lemma}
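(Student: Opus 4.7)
The plan is to first pin down the structure of $D := \textrm{End}_k^0(X)$. By Lemma~\ref{poss end alg}, $D$ is either a CM-field of degree $8$ over $\Q$, a quaternion division algebra over a quartic CM-field, or a degree $4$ central simple division algebra over an imaginary quadratic field. Both $\mathfrak{O}^*$ and $\mathfrak{I}^*$ are non-abelian, so the first possibility is immediately excluded; and neither group appears in the list of Theorem~\ref{deg4 case}, so the third is also excluded. Hence $D$ must be a quaternion division algebra over a quartic CM-field $K := \Q(\pi_X)$.

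Next, I would apply Theorem~\ref{exc gps}: under the hypothesis $G \leq D^{\times}$, we obtain $\sqrt{2} \in K$ (when $G = \mathfrak{O}^*$) or $\sqrt{5} \in K$ (when $G = \mathfrak{I}^*$), together with an isomorphism $D \cong D_{2,\infty} \otimes_{\Q} K$. The rest of the argument then derives a contradiction by computing the local invariants of $D$ and showing that they all vanish, so that $D$ splits globally and therefore cannot be a division algebra.

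The decisive computation is at the primes of $K$ lying above $2$. For any such prime $\nu$, the standard base-change formula for Brauer invariants gives
\begin{equation*}
\textrm{inv}_{\nu}(D) \equiv [K_{\nu} : \Q_2] \cdot \textrm{inv}_2(D_{2,\infty}) \equiv \tfrac{[K_{\nu} : \Q_2]}{2} \pmod{\Z}.
\end{equation*}
Since $2$ ramifies in $\Q(\sqrt{2})$ and is inert in $\Q(\sqrt{5})$ (because $5 \equiv 5 \pmod{8}$), in either case the local degree at $2$ of the relevant quadratic subfield of $K$ equals $2$; consequently, $[K_{\nu} : \Q_2]$ is even for every $\nu \mid 2$, and hence $\textrm{inv}_{\nu}(D) = 0$. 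At a finite prime of $K$ not lying above $2$, $D_{2,\infty}$ is unramified at the rational prime below, so the same holds for $D$. Finally, at every archimedean place of $K$, the place is complex (as $K$ is a CM-field), so $D$ splits there as well. Together, these show $D$ splits at every place of $K$, contradicting the fact that it is a division algebra.

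The only point I expect to require care is the local-degree computation at $2$, namely verifying that $[K_{\nu} : \Q_2]$ is always even because $K$ contains either $\Q(\sqrt{2})$ or $\Q(\sqrt{5})$, both of which have local degree $2$ at the prime above~$2$; once this is in place, the Brauer-invariant argument treats both groups $\mathfrak{O}^*$ and $\mathfrak{I}^*$ uniformly.
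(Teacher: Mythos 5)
Your proposal is correct and follows essentially the same route as the paper: reduce to the case where $D$ is a quaternion division algebra over the quartic CM-field $K$, invoke Theorem~\ref{exc gps} to get $\sqrt{2}\in K$ (resp.\ $\sqrt{5}\in K$), and then observe that since $2$ is ramified in $\Q(\sqrt{2})$ (resp.\ inert in $\Q(\sqrt{5})$), every local degree $[K_\nu:\Q_2]$ is even, forcing all local invariants of $D$ to vanish and contradicting that $D$ is a division algebra. The only cosmetic difference is that you isolate the quaternion case via Lemma~\ref{poss end alg} and Theorem~\ref{deg4 case}, whereas the paper does so via the Sylow $2$-subgroup being (generalized) quaternion; both are valid.
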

\begin{proof}
Let $G=\mathfrak{O}^*$ (resp.\ $G=\mathfrak{I}^*$) and suppose on the contrary that there exists such an abelian variety of dimension $4$ over some finite field $k.$ Since the 2-Sylow subgroup of $G$ is a generalized quaternion group of order $16$ (resp.\ a quaternion group of order $8$), we can see that $D:=\textrm{End}_k^0(X)=D_{2,\infty}\otimes_{\Q} K$ is a quaternion division algebra over $K$, where $K:=\Q(\pi_X)$ is a quartic CM-field, as before. Furthermore, by Theorem \ref{exc gps}, we know that $\Q(\sqrt{2}) \subset K$ (resp.\ $\Q(\sqrt{5}) \subset K$). Since $2$ is totally ramified in $\Q(\sqrt{2})$ (resp.\ $2$ is inert in $\Q(\sqrt{5})$), we can see that $2 ~|~ [K_{\nu}:\Q_2]$ for any prime $\nu$ of $K$ lying over $2,$ and hence, it follows that $D$ splits at all primes of $K$, which contradicts the fact that $D$ is a division algebra. Thus this case cannot occur. \\
\indent This completes the proof.
\end{proof}

The following lemma deals with a special class of groups in light of Remark \ref{nebe rem} below.
\begin{lemma}\label{Nebe not lem}
Let $G \in \{\textrm{Dic}_{32}, \textrm{Dic}_{40}, \textrm{Dic}_{48}, \textrm{Dic}_{60}\}$. Then there exists no simple abelian variety $X$ of dimension $4$ over a finite field $k$ such that $G$ is a finite subgroup of the multiplicative subgroup of $\textrm{End}_k^0(X).$
\end{lemma}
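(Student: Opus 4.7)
The plan is to reduce each of the four cases to a previously established non-embedding statement by means of an explicit subgroup inclusion. In the standard dicyclic presentation $G = \langle a, b \mid a^m = 1,\, b^2 = a^{m/2},\, bab^{-1} = a^{-1}\rangle$ with $|G| = 2m$, I shall exhibit, for each $G$ on the list, a concrete subgroup isomorphic to a smaller dicyclic group already shown to be non-embeddable.

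Taking $c = a^2$ and $d = b$ in $\textrm{Dic}_{32}, \textrm{Dic}_{40}, \textrm{Dic}_{48}$, and $c = a^3$, $d = b$ in $\textrm{Dic}_{60}$, a direct verification of the relations $c^{m'} = 1$, $d^2 = c^{m'/2}$, $dcd^{-1} = c^{-1}$ (where $m'$ is the order of $c$, namely $8, 10, 12, 10$ respectively) yields the proper subgroup containments
\[
  \textrm{Dic}_{16} \subset \textrm{Dic}_{32}, \qquad \textrm{Dic}_{20} \subset \textrm{Dic}_{40}, \qquad \textrm{Dic}_{24} \subset \textrm{Dic}_{48}, \qquad \textrm{Dic}_{20} \subset \textrm{Dic}_{60}.
\]
If $G$ embeds into the multiplicative subgroup of $\textrm{End}_k^0(X)$ for some simple abelian fourfold $X$ over a finite field $k$, the restriction to any of the above subgroups contradicts Lemma \ref{dic not lem} (for $\textrm{Dic}_{16}$ and $\textrm{Dic}_{24}$) or Lemma \ref{dic not lem0} (for $\textrm{Dic}_{20}$).

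The case requiring the most care, and hence the main obstacle, is $\textrm{Dic}_{60}$. Its Sylow $2$-subgroup is merely the cyclic group $C_4$, so, in contrast to the other three groups, \cite[Theorem 9]{1} does not immediately pin $D := \textrm{End}_k^0(X)$ down to the form $D_{2,\infty}\otimes_\Q K$: both possibilities permitted by Lemma \ref{poss end alg}---quaternion over a quartic CM-field, or degree-$4$ over an imaginary quadratic field---must \emph{a priori} be ruled out. This is exactly what the inclusion $\textrm{Dic}_{20} \subset \textrm{Dic}_{60}$ is designed to circumvent, since the proof of Lemma \ref{dic not lem0} already handles both forms of $D$ in one stroke. (A purely cyclotomic attack would also succeed for the first three groups: each $G$ contains $C_m$ with $\varphi(m) = 8$, so that $L := \Q(\zeta_m) \subset D$ is a maximal subfield; a dimension count on the commutative $K$-subalgebra $K \cdot L$ forces $K \subseteq L$; and a direct computation shows that $2$ has even local degree at every prime of $K$ above it in each $\Q(\zeta_m)$ for $m \in \{16,20,24\}$, whence $D_{2,\infty}\otimes_\Q K$ cannot be a division algebra.)
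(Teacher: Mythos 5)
Your proof is correct, and the subgroup inclusions you exhibit ($\textrm{Dic}_{16}\subset\textrm{Dic}_{32}$, $\textrm{Dic}_{20}\subset\textrm{Dic}_{40}$, $\textrm{Dic}_{24}\subset\textrm{Dic}_{48}$, $\textrm{Dic}_{20}\subset\textrm{Dic}_{60}$) all check out against the standard dicyclic presentation; since any subgroup of a group embedded in $D^{\times}$ is itself embedded in $D^{\times}$, the reduction to Lemmas \ref{dic not lem0} and \ref{dic not lem} is immediate. This is, however, a genuinely different route from the paper's own proof, which argues directly: for $\textrm{Dic}_{32}, \textrm{Dic}_{40}, \textrm{Dic}_{48}$ it uses the cyclic subgroup $C_m$ with $\varphi(m)=8$ to force $L=\Q(\zeta_m)$ to be a maximal subfield containing $K$, then checks that every resulting quartic CM-field splits $D_{2,\infty}$ --- essentially the ``purely cyclotomic attack'' you sketch in your final parenthesis --- and for $\textrm{Dic}_{60}$ it runs a two-case analysis using $\Q(\zeta_{30})$, a result of Chinburg--Friedman, and the period argument from case (2) of Lemma \ref{dic not lem0}. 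In fact the author explicitly concedes your shortcut in the remark immediately following the lemma (noting precisely these four inclusions), but opts to give the detailed direct proof as ``instructive.'' What your approach buys is brevity and uniformity, and in particular your observation about $\textrm{Dic}_{60}$ is well taken: its Sylow $2$-subgroup is only $C_4$, so one cannot invoke \cite[Theorem 9]{1} to pin down $D$ as $D_{2,\infty}\otimes_{\Q}K$, and routing through $\textrm{Dic}_{20}$ lets Lemma \ref{dic not lem0} dispose of both shapes of $D$ at once. What the paper's direct proof buys is explicit information about which cyclotomic fields and which CM-fields are being excluded and why, which is reused in the realizability arguments of Theorem \ref{main thm}.
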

\begin{proof}
Let $G=\textrm{Dic}_{48}$ and suppose on the contrary that there exists such an abelian variety of dimension $4$ over some finite field $k.$ Since the 2-Sylow subgroup of $G$ is a generalized quaternion group of order $16$, we can see that $D:=\textrm{End}_k^0(X)=D_{2,\infty}\otimes_{\Q} K$ is a quaternion division algebra over $K$, where $K:=\Q(\pi_X)$ is a quartic CM-field, as before. Also, since $C_{24} \leq \textrm{Dic}_{48}$, we have that $L:=\Q(\zeta_{24}) \subset D.$ Then by a similar argument as in the proof of Lemma \ref{dic not lem0}, we can see that $L$ must contain $K$, and hence, it follows that $K \in \{\Q(\zeta_8), \Q(\zeta_{12}), \Q(\sqrt{-1}, \sqrt{6}), \Q(\sqrt{-3}, \sqrt{2}), \Q(\sqrt{-2}, \sqrt{3}), \Q(\sqrt{-2}, \sqrt{6}), \Q(\zeta_{24}+\zeta_{24}^{-1})\}.$ (In fact, $K \ne \Q(\zeta_{24}+\zeta_{24}^{-1})$ because of our assumption that $K$ is a CM-field.) Now, it is easy to see that all the six CM-fields split $D_{2,\infty},$ and this contradicts the fact that $D$ is a division algebra. Thus this case cannot occur. \\ 
\indent For $G = \textrm{Dic}_{32}$ (resp.\ $\textrm{Dic}_{40}),$ we can proceed in a similar fashion with $L:=\Q(\zeta_{16})$ (resp.\ $\Q(\zeta_{20})$). \\
\indent Finally, let $G=\textrm{Dic}_{60}$. In view of Lemma \ref{poss end alg} and Theorems \ref{deg2 case}, \ref{deg4 case}, we consider the following two cases: \\ 
\indent (1) $D$ is a quaternion division algebra over $K$ where $K:=\Q(\pi_X)$ is a quartic CM-field. Since $C_{30} \leq \textrm{Dic}_{60},$ we have that $L:=\Q(\zeta_{30}) \subset D.$ Then by a similar argument as in the proof of Lemma \ref{dic not lem0}, we can see that $L$ must contain $K$ so that $K \in \{\Q(\zeta_5), \Q(\sqrt{5},\sqrt{-3}), \Q(\zeta_{15}+\zeta_{15}^{-1})\}$. Moreover, since $D^{\times}/K^{\times}$ contains $\textrm{Dic}_{60} /\{\pm 1\} \cong D_{15},$ whence, $C_{15},$ it follows from \cite[Lemma 2.1]{2} that $\zeta_{15}+\zeta_{15}^{-1} \in K$. Thus we can see that $K=\Q(\zeta_{15}+\zeta_{15}^{-1}),$ and this contradicts the fact that $K$ is a CM-field. Thus we can see that this case cannot occur. \\
\indent (2) $D$ is a central simple division algebra of degree $4$ over $K$ where $K:=\Q(\pi_X)$ is an imaginary quadratic field. Then since $\textrm{Dic}_{20} \leq \textrm{Dic}_{60}$ so that $D_{2,\infty} \subset D,$ we can proceed in a similar fashion as in the proof of Lemma \ref{dic not lem0} for the case (2) to see that this case cannot occur. \\
\indent From (1) and (2), the desired result follows for $G=\textrm{Dic}_{60}$. \\
\indent This completes the proof.
\end{proof}
\begin{remark}\label{nebe rem}
In view of \cite[Theorem 6.1]{13}, each of the four groups in Lemma \ref{Nebe not lem} is an absolutely irreducible maximal finite subgroup of some totally definite quaternion algebra over a totally real number field. 
\end{remark}

\begin{remark}
We note that Lemma \ref{O,I not lem} (resp.\ Lemma \ref{Nebe not lem}) also follows from Lemma \ref{Q8 not max lem} (resp.\ Lemmas \ref{dic not lem0} and \ref{dic not lem}) in view of the fact that $\mathfrak{T}^* \leq \mathfrak{O}^*, \mathfrak{I}^*$ (resp.\ $\textrm{Dic}_{16} \leq \textrm{Dic}_{32}$, $\textrm{Dic}_{20} \leq \textrm{Dic}_{40}, \textrm{Dic}_{60},$ and $\textrm{Dic}_{24} \leq \textrm{Dic}_{48}).$ Even though, this is the case, it could be also somewhat instructive to give a detailed proof as in the above.
\end{remark}

Finally, for the group $\textrm{Dic}_{12},$ we need to combine those arguments given in Lemmas \ref{dic not lem0} and \ref{Q8 not max lem}.
\begin{lemma}\label{dic12 not lem}
Let $G=\textrm{Dic}_{12}.$ Then there exists no simple abelian variety $X$ of dimension $4$ over a finite field $k$ such that $G$ is a finite subgroup of the multiplicative subgroup of $\textrm{End}_k^0(X).$
\end{lemma}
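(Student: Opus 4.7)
The plan is to combine the period--index argument of Case $(2)$ of Lemma~\ref{dic not lem0} with the Frobenius--ramification analysis of Lemma~\ref{Q8 not max lem}. First, I would identify the $\Q$-subalgebra $A \subset D$ generated by $G = \textrm{Dic}_{12} = \langle a, b \mid a^6=1,\ b^2=a^3,\ bab^{-1}=a^{-1}\rangle$. In a division ring $-1$ is the only element of order $2$, so $a^3 = b^2 = -1$; setting $c = 2a - 1$ gives $c^2 = -3$ and $bc = -cb$, whence $A = \Q\langle a, b\rangle \cong \left(\frac{-1, -3}{\Q}\right)$, which is ramified exactly at $3$ and $\infty$ (quick Hilbert-symbol check), and hence $A \cong D_{3,\infty}$.

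Since $G$ is non-abelian, Lemma~\ref{poss end alg} leaves two cases. In the case that $D$ has degree $4$ over an imaginary quadratic field $K$, the argument of Lemma~\ref{dic not lem0}, Case $(2)$, applies verbatim: $B := A \otimes_\Q K \subset D$ is a quaternion $K$-algebra, the double-centralizer theorem writes $D \cong B \otimes_K C_D(B)$ as a tensor product of two quaternion algebras, so $[D] \in \textrm{Br}(K)$ has period dividing $2$ --- contradicting the fact that over the number field $K$ the period of a degree-$4$ division algebra equals its index, namely $4$.

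In the case that $D$ is quaternion over the quartic CM-field $K = \Q(\pi_X)$, a $\Q$-dimension count ($16 = 16$) gives $D = A \otimes_\Q K = D_{3,\infty} \otimes_\Q K$. The local invariants of $D_{3,\infty} \otimes K$ are supported on primes above $3$, with value $[K_v:\Q_3]/2 \pmod{\Z}$, so for $D$ to be a division algebra some $v \mid 3$ must have $[K_v:\Q_3]$ odd, and matching with Proposition~\ref{local inv} forces $\textrm{char}(k) = 3$. I would then enumerate the factorizations of $3$ in the quartic CM-field $K$ that admit an odd-degree prime factor --- namely $1{+}1{+}1{+}1$, $1{+}1{+}2$, and $1{+}3$ --- and in each case solve the invariant-matching equations for $c_v = \ord_v(\pi_X)$ together with the CM-relation $c_v + c_{\bar v} = a e_v$ (where $q = 3^a$). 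The recurring outcome is $(\pi_X) = (3)^{a/2}$; Kronecker's theorem then forces $\pi_X / 3^{a/2}$ to be a root of unity, so $K = \Q(\pi_X)$ is one of the three quartic cyclotomic fields $\Q(\zeta_5), \Q(\zeta_8), \Q(\zeta_{12})$. A direct check of how $3$ factors in each (inert; two primes of residue degree $2$; single prime with $e = f = 2$, respectively) shows that none of them has a prime of odd local degree above $3$, the contradiction.

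The main obstacle is the enumeration in the second case: showing, across all three admissible factorization types of $3$ in $K$, that $\pi_X$ is forced (up to a root of unity) to be the rational integer $3^{a/2}$, and then ruling out each of the three candidate cyclotomic $K$ via its known ramification behavior at $3$.
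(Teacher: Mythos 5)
Your proposal is correct and follows essentially the same route as the paper: it identifies $D_{3,\infty}=\left(\frac{-1,-3}{\Q}\right)\subset D$, kills the degree-$4$-over-imaginary-quadratic case by the period--index argument of Lemma~\ref{dic not lem0}(2), and kills the quaternion-over-quartic-CM case by forcing $\mathrm{char}(k)=3$ and analyzing the local invariants at the primes above $3$, exactly as the paper does by importing the argument of Lemma~\ref{Q8 not max lem}. The only (harmless) divergence is that you conclude $K\in\{\Q(\zeta_5),\Q(\zeta_8),\Q(\zeta_{12})\}$ directly via Kronecker's theorem from $(\pi_X)=(3)^{a/2}$, where the paper cites results of Gonz\'alez and Xue--Yang--Yu.
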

\begin{proof}
Let $G=\textrm{Dic}_{12}$ and suppose on the contrary that there exists such an abelian variety $X$ of dimension $4$ over some finite field $k:=\mathbb{F}_q.$ Let $D=\textrm{End}_k^0(X).$ Note that since $G \leq D^{\times},$ we have $D_{3,\infty} =\left(\frac{-1,-3}{\Q} \right) \subset D$. % and $\Q(\zeta_{10}) \subset D.$
%we have $D_{3,\infty}=\left(\frac{-1,-3}{\Q}\right) \subset D.$ 
Also, by Lemma \ref{poss end alg} and Theorems \ref{deg2 case}, \ref{deg4 case}, we only need to consider the following two cases: \\
\indent (1) $D$ is a quaternion division algebra over $K$ where $K:=\Q(\pi_X)$ is a quartic CM-field. Then we have $D_{3,\infty} \otimes_{\Q} K =D$ (by dimension counting), and this implies that $q=3^a$ for some $a \geq 1$. Now, we can proceed as in the whole proof of Lemma \ref{Q8 not max lem} to see that this case cannot occur. \\
\indent (2) $D$ is a central simple division algebra of degree $4$ over $K$ where $K:=\Q(\pi_X)$ is an imaginary quadratic field. Let $B=D_{3,\infty} \otimes_{\Q} K.$ Then we can proceed as in the proof of Lemma \ref{dic not lem0} for the case (2) to see that this case cannot occur. \\
\indent This completes the proof.  
\end{proof}

Also, we record one result on the embeddability of certain number fields into a specific division algebra, which will be used in the proof of our main theorem.

\begin{lemma}\label{embed lem1}
Let $D$ be a quaternion division algebra over $\Q(\zeta_8)$, which is ramified exactly at the places $\nu$ of $\Q(\zeta_8)$ lying over $97.$ Then $\Q(\zeta_{16})$ and $\Q(\zeta_{24})$ cannot be embedded into $D$.
\end{lemma}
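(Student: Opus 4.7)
The plan is to invoke the classical embedding criterion: a quadratic field extension $L/K$ embeds into a quaternion division algebra $D$ over $K$ (as a $K$-subalgebra) if and only if $L$ splits $D$, which is equivalent to the condition that no place of $K$ at which $D$ ramifies splits in the extension $L/K$. Since $K=\Q(\zeta_8)$ is contained in both $\Q(\zeta_{16})$ and $\Q(\zeta_{24})$, each is a degree-two extension of $K$. Moreover $K$ is the unique subfield of $D$ isomorphic to $K$, being the center, so any $\Q$-algebra embedding of one of these larger cyclotomic fields into $D$ must restrict to the identity on $K$, and the criterion applies.

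Next I would analyze the decomposition of the rational prime $97$ in the three cyclotomic fields. Since $97\equiv 1\pmod{48}$ (in particular $97\equiv 1$ modulo $8$, $16$, and $24$), the Frobenius at $97$ is trivial in $\Gal(\Q(\zeta_n)/\Q)$ for $n\in\{8,16,24\}$, so $97$ splits completely in each of those fields. In particular, there are exactly four primes $\nu_1,\dots,\nu_4$ of $K$ above $97$, and each of them (which are precisely the ramified places of $D$ by hypothesis) splits further in the quadratic extensions $\Q(\zeta_{16})/K$ and $\Q(\zeta_{24})/K$.

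Applying the embedding criterion, since every place of $K$ where $D$ ramifies splits in both extensions, neither $\Q(\zeta_{16})$ nor $\Q(\zeta_{24})$ splits $D$ locally at the $\nu_i$, so neither splits $D$ globally, and therefore neither can be embedded into $D$. There is no substantial obstacle: the argument reduces to checking $97\equiv 1\pmod{16}$ and $97\equiv 1\pmod{24}$ together with invoking the standard splitting-field criterion for quaternion algebras over number fields.
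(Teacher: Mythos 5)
Your proof is correct and follows essentially the same route as the paper: both arguments reduce to the observation that $97\equiv 1\pmod{48}$, so $97$ splits completely in $\Q(\zeta_{16})$ and $\Q(\zeta_{24})$, whence the local degree $[L_{\mathfrak{P}}:K_{\mathfrak{p}}]$ at each ramified place of $D$ is $1$ and the standard local--global embedding criterion (the paper cites Linowitz--Shemanske, you use the equivalent splitting-field criterion for quaternion algebras) rules out the embedding. The only cosmetic difference is your extra remark that an abstract embedding must carry $\Q(\zeta_8)$ onto the center; this is fine (and harmless either way, since the ramification set is Galois-stable).
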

\begin{proof}
Let $\mathfrak{p}$ be a prime of $K:=\Q(\zeta_{8})$ lying over $97$, and let $\mathfrak{P}$ be a prime of $L:=\Q(\zeta_{16})$ lying over $\mathfrak{p}$. In particular, $\mathfrak{P}$ lies over $97.$ Since $97$ splits completely in both $K$ and $L$, it follows that $[L_{\mathfrak{P}}:K_{\mathfrak{p}}]=1.$ On the other hand, by our assumption, $D$ is ramified at $\mathfrak{p}$ i.e.\ we have $D_{\mathfrak{p}}:=D \otimes_K K_{\mathfrak{p}}$ is a quaternion division algebra over $K_{\mathfrak{p}}.$ Since $2$ does not divide $1=[L_{\mathfrak{P}}:K_{\mathfrak{p}}],$ it follows from the first theorem of \cite[p. 407]{10} that $L$ cannot be embedded into $D.$ By a similar argument, we can show that $\Q(\zeta_{24})$ cannot be embedded into $D$, either. \\
\indent This completes the proof.  
\end{proof}

%\begin{lemma}\label{embed lem2}
%Let $D$ be a quaternion division algebra over $K:=\Q(\zeta_8)$. Then $\textrm{Dic}_{16}$ cannot be a finite subgroup of the muliplicative subgroup of $D.$
%\end{lemma}

%\begin{proof}
%If $\textrm{Dic}_{16} \leq D^{\times},$ then since the $2$-Sylow subgroup of $\textrm{Dic}_{16}$ is a generalized quaternion group of order $16,$ we have that $D=D_{2,\infty}\otimes_{\Q} K$ by \cite[Theorem 9]{1}. Since $2$ is totally ramified in $K$, we get $[K_{\mathfrak{p}}:\Q_{2}]=4$ (where $\mathfrak{p}$ is the unique prime of $K$ lying over $2$), and hence, it follows that $D$ splits at all primes of $K$ i.e.\ $D$ is not a division algebra, which contradicts our assumption. Hence, we can see that $\textrm{Dic}_{16}$ is not a finite subgroup of $D^{\times}.$
%\end{proof}

%\begin{lemma}\label{embed lem3}
%Let $D$ be a quaternion division algebra over $K:=\Q(\zeta_8)$. Then $\textrm{Dic}_{20}$ cannot be a finite subgroup of the muliplicative subgroup of $D.$
%\end{lemma}

%\begin{proof}
%If $\textrm{Dic}_{20} \leq D^{\times},$ then since $C_{10} \leq \textrm{Dic}_{20},$ we have that $L:=\Q(\zeta_{10}) \subseteq D.$ Then we can proceed as in the proof of Lemma \ref{dic not lem0} to see that $\textrm{Dic}_{20}$ is not a finite subgroup of $D^{\times}.$ (Note that $K \cap L = \Q.$)  
%\end{proof}

%\begin{lemma}\label{C4 max lem}
%Let $X$ be a simple abelian variety of dimension $4$ over $k:=\F_{241^4}.$ 
%\end{lemma}

\indent Now, we are ready to introduce the main theorem of this paper. 

\begin{theorem}\label{main thm}
Let $G$ be a finite group. Then there exists a finite field $k$ and a simple abelian variety $X$ of dimension $4$ over $k$ such that $G$ is the automorphism group of a simple polarized abelian variety of dimension $4$ over $k$, which is maximal in the isogeny class of $X$ if and only if $G$ is one of the following groups in Table 2 (up to isomorphism):
  \begin{center}
  \begin{tabular}{c c  }
\hline
$$ & $G$ \\
%\multicolumn{3}{|c|}{Ene}\\
\hline
$\sharp 1$ & $C_2$  \\
\hline
$\sharp 2$ & $C_4$ \\
\hline
$\sharp 3$ & $C_6$ \\
\hline
$\sharp 4$ & $C_8$  \\
\hline
$\sharp 5$ & $C_{10}$ \\
\hline
$\sharp 6$ & $C_{12}$  \\
\hline
$\sharp 7$ & $C_{16}$  \\
\hline
$\sharp 8$ & $C_{20}$  \\
\hline
$\sharp 9$ & $C_{24}$  \\
\hline
$\sharp 10$ & $C_{30}$  \\
\hline
$\sharp 11$ & $C_5 \rtimes C_{8}$  \\
\hline
$\sharp 12$ & $C_3 \rtimes C_{16}$  \\
\hline
$\sharp 13$ & $C_5 \rtimes C_{16}$  \\
\hline
%$\sharp 14$ & $\mathfrak{T}^*$  \\
%\hline
\end{tabular}
\vskip 4pt
\textnormal{Table 2} \\
\textnormal{Maximal automorphism groups of simple polarized abelian fourfolds over finite fields.}
\end{center}
\end{theorem}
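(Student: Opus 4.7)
The plan is to prove the biconditional by splitting it into the two directions, exploiting all of the structural results established so far. I expect the forward (``only if'') direction to be a short assembly, while the backward (existence/realizability) direction will require case-by-case explicit constructions via Honda--Tate theory.

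For the ``only if'' direction, suppose $G$ is realized maximally as the automorphism group of a simple polarized abelian fourfold $(X', \mathcal{L})$ in the isogeny class of $X$. Then $G$ embeds in the multiplicative group $\textrm{End}_k^0(X)^{\times} = \textrm{End}_k^0(X')^{\times}$, so by Corollary~\ref{cor 19} it lies in one of the seven families listed there. I will then simply cross off every group excluded by the preceding elimination lemmas: Lemma~\ref{dic12 not lem} removes $\textrm{Dic}_{12}$; Lemma~\ref{dic not lem0} removes $\textrm{Dic}_{20}$; Lemma~\ref{dic not lem} removes $\textrm{Dic}_{16}$ and $\textrm{Dic}_{24}$; Lemma~\ref{Q8 not max lem} removes $Q_8$ and $\mathfrak{T}^*$; Lemma~\ref{O,I not lem} removes $\mathfrak{O}^*$ and $\mathfrak{I}^*$; and Lemma~\ref{Nebe not lem} removes $\textrm{Dic}_{32}, \textrm{Dic}_{40}, \textrm{Dic}_{48}, \textrm{Dic}_{60}$. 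What remains is exactly the thirteen groups in Table~2.

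For the ``if'' direction, I need to realize each of the thirteen groups. The basic recipe is the same in every case: choose an abstract division algebra $D$ of the appropriate type (quartic CM-field of degree $8$, quaternion algebra over a quartic CM-field, or degree-$4$ algebra over an imaginary quadratic field) that contains $G$; select a suitable Weil number $\pi$ inside the center so that the local invariants of $D$ match the formula in Proposition~\ref{local inv}; invoke Honda--Tate (Theorem~\ref{thm HondaTata}) to obtain a simple abelian fourfold $X$ over some $\mathbb{F}_q$ with $\textrm{End}_k^0(X) \cong D$; and then choose a polarization $\mathcal{L}$ on a suitable $X'$ in the isogeny class of $X$ whose Rosati involution restricts to complex conjugation on the CM-subfield generated by $G$, so that $\textrm{Aut}_k(X', \mathcal{L})$ equals the set of elements of $\textrm{End}_k(X')^{\times}$ fixed by $\lambda \mapsto \lambda^{\vee}\lambda = 1$, which I arrange to coincide precisely with $G$. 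Maximality is then verified by listing the finite overgroups of $G$ in $D^{\times}$ using Corollary~\ref{cor 19} and checking that none of them preserves any polarization in our isogeny class (or invoking the elimination lemmas again where the overgroup is itself forbidden).

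The ten cyclic cases $C_n$ are the most routine: I take $K = \Q(\zeta_n, \alpha)$ for a suitable algebraic number so that $K$ is a CM-field of degree $8$, pick a $q$-Weil generator of $K$ for an appropriate prime power $q$, and land in case (3) of Lemma~\ref{poss end alg} where $D$ is itself a CM-field. The three non-cyclic cases are the main obstacle. For $C_5 \rtimes C_8$ I work inside a quaternion algebra over the quartic CM-field $\Q(\zeta_{20})$ (or suitably over $\Q(\zeta_5)$ tensored appropriately), and similarly $C_3 \rtimes C_{16}$ lives naturally inside a quaternion algebra over $\Q(\zeta_{24})$ (or its CM-subfield), while $C_5 \rtimes C_{16}$ requires the degree-$4$ algebra over the imaginary quadratic field $\Q(\sqrt{-1})$ or $\Q(\sqrt{-5})$. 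In each case the delicate step is choosing the right ramification for $D$ so that Theorem~\ref{exc gps}-type obstructions do not kill the embedding; this is precisely where Lemma~\ref{embed lem1} is used, to ensure that the overgroups $\textrm{Dic}_{16}$, $\textrm{Dic}_{24}$ are not accidentally realized, thereby certifying maximality of $C_3 \rtimes C_{16}$ and $C_5 \rtimes C_{16}$ inside the chosen division algebra. The hardest part will be producing an explicit Weil number in each of these three cases whose local invariants (via Proposition~\ref{local inv}) give exactly the required division algebra and no larger subgroup survives as the automorphism group of any polarization.
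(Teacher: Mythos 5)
Your ``only if'' direction matches the paper exactly: combine Corollary~\ref{cor 19} with the elimination Lemmas \ref{dic not lem0}--\ref{Nebe not lem} and \ref{dic12 not lem}. The realizability direction, however, has genuine gaps. First, you never explain how to pass from ``$G$ embeds in $D^{\times}$'' to ``$G$ consists of actual automorphisms of some variety in the isogeny class.'' The paper does this by choosing a \emph{maximal} $\Z$-order $\mathcal{O}$ of $D$ containing $G$ and invoking Waterhouse's theorem to produce $X'$ isogenous to $X$ with $\textrm{End}_k(X')=\mathcal{O}$, so that $G\leq\mathcal{O}^{\times}=\textrm{Aut}_k(X')$; without this step your construction stalls. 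Second, your plan to ``choose a polarization whose Rosati involution restricts to complex conjugation\dots\ which I arrange to coincide precisely with $G$'' is not a workable mechanism: you give no way to control the unitary group of a polarization. The paper instead takes any ample $\mathcal{L}$ and averages, $\mathcal{L}'=\bigotimes_{f\in G}f^{*}\mathcal{L}$, to get $G\leq\textrm{Aut}_k(X',\mathcal{L}')$, and then concludes \emph{equality} (and simultaneously maximality in the isogeny class) from the single arithmetic fact that $G$ is a maximal finite subgroup of $D^{\times}$. That fact is where all the work lies, and it forces very specific choices of Weil numbers: e.g.\ for $C_8$ one needs a quaternion algebra over $\Q(\zeta_8)$ ramified exactly over $97$ precisely so that Lemma~\ref{embed lem1} kills the potential overgroups $C_{16}$, $C_{24}$, $C_3\rtimes C_{16}$ (you misattribute this lemma to the maximality of $C_3\rtimes C_{16}$ and $C_5\rtimes C_{16}$, where it is not used).

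Two further concrete errors: $\Q(\zeta_{20})$ and $\Q(\zeta_{24})$ have degree $8$ over $\Q$, so they are not quartic CM-fields and a quaternion algebra over them cannot be the endomorphism algebra of a simple fourfold; the paper realizes $C_5\rtimes C_8$ over the quartic CM-field $\Q(\zeta_{20}+\zeta_{20}^{9})$ and $C_3\rtimes C_{16}$ over $\Q(\zeta_8)$ (with $C_5\rtimes C_{16}$ in a degree-$4$ algebra over $\Q(\sqrt{-1})$, as you say). Also, your uniform plan of realizing every cyclic $C_n$ inside a degree-$8$ CM-field does not match the constraints: the paper is forced to use degree-$4$ division algebras over imaginary quadratic fields for $C_4, C_6$ and quaternion algebras over quartic CM-fields for $C_8, C_{10}, C_{12}$, reserving the CM-field case for $C_2$ and $C_{16}, C_{20}, C_{24}, C_{30}$; in each case one must verify via Proposition~\ref{local inv} that the chosen Weil number actually yields a \emph{division} algebra of the right degree with $\dim X=4$, which your proposal leaves entirely open.
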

\begin{proof}
  Suppose first that there exists a finite field $k$ and a simple abelian variety $X$ of dimension $4$ over $k$ such that $G$ is the automorphism group of a polarized abelian variety over $k,$ which is maximal in the isogeny class of $X$. (In particular, we have that $|G|$ is even because $-1$ preserves the polarization $\mathcal{L}$ i.e.\ $-1 \in G$.) Then by Lemma \ref{poss end alg}, Corollary \ref{cor 19}, Lemmas \ref{dic not lem0}, \ref{dic not lem}, \ref{Q8 not max lem}, \ref{O,I not lem}, \ref{Nebe not lem}, and \ref{dic12 not lem}, $G$ is one of the 13 groups in the above table. Hence, it suffices to show the converse. We prove the converse by considering them one by one. \\
\indent (1) Take $G=C_2.$ Let $k=\F_{2}$ and let $\pi$ be a zero of the polynomial $t^8 +2t^6 -t^5+t^4-2t^3 +8t^2 +16 \in \Z[t]$ so that $\pi$ is a $2$-Weil number. By Theorem \ref{thm HondaTata}, there exists a simple abelian variety $X$ over $\F_{2}$ of dimension $r$ such that $\pi_X$ is conjugate to $\pi.$ Then we have that $K:=\Q(\pi_X)=\Q(\pi)$ is a CM-field of degree $8$ so that $K$ has no real embeddings, and hence, we can compute that all the local invariants of $D:=\textrm{End}_k^0(X)$ are zero. (Note that $2$ splits into a product of two primes in $K$.) Hence it follows from Proposition \ref{local inv}-(b) and Corollary \ref{cor TateEnd0}-(b) that $D=K$ and $r=4.$ Let $\mathcal{O}$ be the ring of integers of $K.$ Since $\mathcal{O}$ is a maximal $\Z$-order in $D,$ there exists a simple abelian variety $X^{\prime}$ over $k$ such that $X^{\prime}$ is $k$-isogenous to $X$ and $\textrm{End}_k(X^{\prime})=\mathcal{O}$ by \cite[Theorem 3.13]{17}. Note also that $C_2 \leq \mathcal{O}^{\times} = \Z^3 \times C_2 =\textrm{Aut}_k(X^{\prime}).$ \\
\indent Now, let $\mathcal{L}$ be an ample line bundle on $X^{\prime}$, and put
  \begin{equation*}
    \mathcal{L}^{\prime}:=\bigotimes_{f \in C_2} f^* \mathcal{L}.
  \end{equation*}
  Then $\mathcal{L}^{\prime}$ is also an ample line bundle on $X^{\prime}$ that is preserved under the action of $C_2$ so that $C_2 \leq \textrm{Aut}_k(X^{\prime},\mathcal{L}^{\prime}).$
Note also that the maximal finite subgroup of $D^{\times}$ is $C_2$ by Dirichlet Unit Theorem. Since $\textrm{Aut}_k (X^{\prime},\mathcal{L}^{\prime})$ is a finite subgroup of $D^{\times},$ it follows that
  \begin{equation*}
    G = C_2 =\textrm{Aut}_k(X^{\prime},\mathcal{L}^{\prime}).
  \end{equation*}
Furthermore, again, since $C_2$ is a maximal finite subgroup of $D^{\times}$, we can conclude that $G$ is maximal in the isogeny class of $X.$ \\
\indent (2) Take $G=C_4.$ Let $k=\F_{241^4}$ and let $\pi$ be a zero of the quadratic polynomial $t^2 +240 \cdot 241 t +241^4 \in \Z[t]$ so that $\pi$ is a $241^4$-Weil number. By Theorem \ref{thm HondaTata}, there exists a simple abelian variety $X$ over $\F_{241^4}$ of dimension $r$ such that $\pi_X$ is conjugate to $\pi.$ Then we have $K:=\Q(\pi_X)=\Q(\pi)=\Q(\sqrt{-1})$ so that $K$ has no real embeddings. Also, note that $241$ splits completely in $K$, and we can compute the local invariants of $D:=\textrm{End}_k^0(X)$ at the two places of $K$ lying above $241$ to be $3/4$ and $1/4$. Hence it follows from Proposition \ref{local inv}-(b) and Corollary \ref{cor TateEnd0}-(b) that $D$ is a central simple division algebra of degree $4$ over $K$ and $r=4.$ Let $\mathcal{O}$ be a maximal $\Z$-order in $D$ containing the ring of integers $\Z[\sqrt{-1}]$ of $K$. Then since $\mathcal{O}$ is a maximal $\Z$-order in $D,$ there exists a simple abelian variety $X^{\prime}$ over $k$ such that $X^{\prime}$ is $k$-isogenous to $X$ and $\textrm{End}_k(X^{\prime})=\mathcal{O}$ by \cite[Theorem 3.13]{17}. Note also that $C_4 \cong \Z[\sqrt{-1}]^{\times} \leq \mathcal{O}^{\times} =\textrm{Aut}_k(X^{\prime}).$ \\
\indent Now, let $\mathcal{L}$ be an ample line bundle on $X^{\prime}$, and put
  \begin{equation*}
    \mathcal{L}^{\prime}:=\bigotimes_{f \in C_4} f^* \mathcal{L}.
  \end{equation*}
  Then $\mathcal{L}^{\prime}$ is also an ample line bundle on $X^{\prime}$ that is preserved under the action of $C_4$ so that $C_4 \leq \textrm{Aut}_k(X^{\prime},\mathcal{L}^{\prime}).$
We claim that $C_4$ is a maximal finite subgroup of $D^{\times}.$ Indeed, suppose on the contrary that there is a finite group $H \leq D^{\times}$ containing $C_4$ properly. Then by Theorem \ref{deg4 case} and Lemmas \ref{dic not lem0}, \ref{Nebe not lem}, we have that 
\begin{equation*}
H \in \{ C_8, C_{12}, C_{16}, C_{20}, C_{24}, C_5 \rtimes C_8, C_3 \rtimes C_{16}, C_5 \rtimes C_{16}\}.
\end{equation*}
Now, since $241$ splits completely in $\Q(\zeta_{n})$ for $n \in \{8,12,16,20,24\},$ it follows from the first theorem of \cite[p. 407]{10} that $\Q(\zeta_n)$ cannot be embedded in $D$, and hence, $C_n$ is not a finite subgroup of $D^{\times}$ for all $n=8, 12, 16, 20, 24.$ Also, since $C_8 \leq C_5 \rtimes C_8$ and $C_{16} \leq C_3 \rtimes C_{16}, C_5 \rtimes C_{16}$, we can see that $C_5 \rtimes C_8, C_3 \rtimes C_{16}$, and $C_5 \rtimes C_{16}$ are not finite subgroups of $D^{\times}.$ 
%Hence, it suffices to consider the case when $H=\textrm{Dic}_{12}$ or $H=\textrm{Dic}_{20}.$ If $H=\textrm{Dic}_{12}$ (resp.\ $H=\textrm{Dic}_{20}$) so that $D_{3,\infty}$ (resp.\ $D_{2,\infty} \otimes_{\Q} \Q(\sqrt{5})$) is a sub $K$-algebra of $D$, then since we have $K=K \cap D_{3,\infty} \subseteq Z(D_{3,\infty})\footnote{Here, $Z(D_{3,\infty})$ denotes the center of $D_{3,\infty}$.}=\Q$ (resp.\ $K=K \cap (D_{2,\infty} \otimes_{\Q} \Q(\sqrt{5})) \subseteq Z(D_{2,\infty} \otimes_{\Q} \Q(\sqrt{5}))=\Q(\sqrt{5}))$, this is a contradiction. 
Thus we can conclude that $C_4$ is a maximal finite subgroup of $D^{\times}.$ Then since $\textrm{Aut}_k (X^{\prime},\mathcal{L}^{\prime})$ is a finite subgroup of $D^{\times},$ it follows that
  \begin{equation*}
    G = C_4 =\textrm{Aut}_k(X^{\prime},\mathcal{L}^{\prime}).
  \end{equation*}
Furthermore, again, since $C_4$ is a maximal finite subgroup of $D^{\times}$, we can conclude that $G$ is maximal in the isogeny class of $X.$ \\
\indent (3) For $G=C_6,$ let $k=\F_{241^4}$ and let $\pi$ be a zero of the quadratic polynomial $t^2 + 286 \cdot 241 t + 241^4 \in \Z[t].$ Then we can proceed by a similar argument as in the proof of (2) with these choices.  \\
\indent (4) Take $G=C_8.$ Let $k=\F_{9409}.$ Then $\pi:=97 \cdot \zeta_8$ is a $9409$-Weil number, and hence, by Theorem \ref{thm HondaTata}, there exists a simple abelian variety $X$ over $\F_{9409}$ of dimension $r$ such that $\pi_X$ is conjugate to $\pi.$ Then we have that $\Q(\pi_X)=\Q(\zeta_8),$ and hence, $\Q(\pi_X)$ has no real embeddings. Since $97$ splits completely in $\Q(\zeta_8),$ we can compute that $\textrm{inv}_{v}(\textrm{End}_k^0(X))=1/2$ for any place $\nu$ of $\Q(\zeta_8)$ lying over $97$ by Proposition \ref{local inv}-(a). Hence it follows from Proposition \ref{local inv}-(b) that $\textrm{End}_k^0(X)$ is a quaternion algebra over $\Q(\pi_X)$, which, in turn, implies that $r=4$ by Corollary \ref{cor TateEnd0}-(b). Now, it is well known that such $D:=\textrm{End}_k^0(X)$ has a maximal $\Z$-order $\mathcal{O}$ containing the ring of integers $\Z[\zeta_8]$ of $\Q(\zeta_8).$ Since $\mathcal{O}$ is a maximal $\Z$-order in $D,$ there exists a simple abelian variety $X^{\prime}$ over $k$ such that $X^{\prime}$ is $k$-isogenous to $X$ and $\textrm{End}_k(X^{\prime})=\mathcal{O}$ by \cite[Theorem 3.13]{17}. Note also that $C_8 \cong \langle \zeta_8 \rangle \leq \Z[\zeta_8]^{\times} \leq \mathcal{O}^{\times}=\textrm{Aut}_k(X^{\prime}).$   \\
\indent Now, let $\mathcal{L}$ be an ample line bundle on $X^{\prime}$, and put
  \begin{equation*}
    \mathcal{L}^{\prime}:=\bigotimes_{f \in \langle \zeta_8 \rangle} f^* \mathcal{L}.
  \end{equation*}
  Then $\mathcal{L}^{\prime}$ is also an ample line bundle on $X^{\prime}$ that is preserved under the action of $\langle \zeta_8 \rangle$ so that $\langle \zeta_8 \rangle \leq \textrm{Aut}_k(X^{\prime},\mathcal{L}^{\prime}).$ We claim that $C_8$ is a maximal finite subgroup of $D^{\times}.$ Indeed, suppose on the contrary that there is a finite group $H \leq D^{\times}$ containing $C_8$ properly. Then by Theorem \ref{deg2 case} and Lemmas \ref{dic not lem}, \ref{O,I not lem}, and \ref{Nebe not lem}, we have that 
\begin{equation*}
H \in \{ C_{16}, C_{24}, C_5 \rtimes C_8, C_3 \rtimes C_{16} \}.
\end{equation*}
But then, by Lemma \ref{embed lem1} (together with the fact that $C_{16} \leq C_3 \rtimes C_{16}$) and \cite[Table 5]{Inneke} (for the group $C_5 \rtimes C_8$), we can see that $H$ cannot be any of those groups in the above set.
%Now, since $241$ splits completely in $\Q(\zeta_{n})$ for $n \in \{8,12,16,20,24\},$ it follows from the first theorem of \cite[p. 407]{10} that $\Q(\zeta_n)$ cannot be embedded in $D$, and hence, $C_n$ is not a finite subgroup of $D^{\times}$ for all $n=8, 12, 16, 20, 24.$ Also, since $C_8 \leq C_5 \rtimes C_8$ and $C_{16} \leq C_3 \rtimes C_{16}, C_5 \rtimes C_{16}$, we can see that $C_5 \rtimes C_8, C_3 \rtimes C_{16}$, and $C_5 \rtimes C_{16}$ are not finite subgroups of $D^{\times}.$ 
%Hence, it suffices to consider the case when $H=\textrm{Dic}_{12}$ or $H=\textrm{Dic}_{20}.$ If $H=\textrm{Dic}_{12}$ (resp.\ $H=\textrm{Dic}_{20}$) so that $D_{3,\infty}$ (resp.\ $D_{2,\infty} \otimes_{\Q} \Q(\sqrt{5})$) is a sub $K$-algebra of $D$, then since we have $K=K \cap D_{3,\infty} \subseteq Z(D_{3,\infty})\footnote{Here, $Z(D_{3,\infty})$ denotes the center of $D_{3,\infty}$.}=\Q$ (resp.\ $K=K \cap (D_{2,\infty} \otimes_{\Q} \Q(\sqrt{5})) \subseteq Z(D_{2,\infty} \otimes_{\Q} \Q(\sqrt{5}))=\Q(\sqrt{5}))$, this is a contradiction. 
Thus we can conclude that $C_8$ is a maximal finite subgroup of $D^{\times}.$ Then since $\textrm{Aut}_k (X^{\prime},\mathcal{L}^{\prime})$ is a finite subgroup of $D^{\times},$ it follows that
  \begin{equation*}
    G = C_8 =\textrm{Aut}_k(X^{\prime},\mathcal{L}^{\prime}).
  \end{equation*}
Furthermore, again, since $C_8$ is a maximal finite subgroup of $D^{\times}$, we can conclude that $G$ is maximal in the isogeny class of $X.$ \\
%Note also that the maximal finite subgroup of $\Z[\zeta_8]$ is $\langle \zeta_8 \rangle = C_8$ by Dirichlet Unit Theorem. Since $\textrm{Aut}_k (X^{\prime},\mathcal{L}^{\prime})$ is a finite subgroup of the multiplicative group of $\Q(\zeta_8),$ it follows that
  %\begin{equation*}
    %G = \langle \zeta_8 \rangle =\textrm{Aut}_k(X^{\prime},\mathcal{L}^{\prime}).
  %\end{equation*}
%Now, suppose that $Y$ is an abelian variety over $k$ which is $k$-isogenous to $X.$ In particular, we have $\textrm{End}_k^0(Y)=D.$ Suppose that $H=\textrm{Aut}_k (Y,\mathcal{M})$ for a polarization $\mathcal{M}$ on $Y$, and that $H$ contains $G$ as a proper subgroup. Then by Theorem \ref{deg2 case} and Lemmas \ref{dic not lem}, \ref{O,I not lem}, \ref{Nebe not lem}, and \ref{40,1 not poss}, we have the following list of possible such $H$:
%\begin{equation*}
%H \in \{ C_{16}, C_{24}, C_3 \rtimes C_{16} \}.
%\end{equation*}
%(Note that $\textrm{Dic}_{32}$ and $C_3 \rtimes C_{16}$ contain $C_{16}$ as a subgroup, and $\textrm{Dic}_{48}$ contains $C_{24}$ as a subgroup. Finally, $C_5 \rtimes C_8$ contains $\textrm{Dic}_{20}$ as a subgroup.) 
%Therefore we can conclude that $G$ is maximal in the isogeny class of $X.$ \\
\indent (5)-(6): For $G=C_{10}$ (resp.\ $C_{12}$), let $k=\F_{3721}$ (resp.\ $k=\F_{5329}$) and let $\pi=61 \cdot \zeta_{10}$ (resp.\ $\pi=73 \cdot \zeta_{12}$) which is a $3721$-Weil number (resp.\ $5329$-Weil number). Then we can proceed by a similar argument as in the proof of (4) with these choices. \\
\indent (7) Take $G=C_{16}.$ Let $k=\F_{4}.$ Then $\pi:=2 \cdot \zeta_{16}$ is a $4$-Weil number, and hence, by Theorem \ref{thm HondaTata}, there exists a simple abelian variety $X$ over $\F_{4}$ of dimension $r$ such that $\pi_X$ is conjugate to $\pi.$ Then we have that $\Q(\pi_X)=\Q(\zeta_{16}),$ and hence, $\Q(\pi_X)$ has no real embeddings. Since $2$ is totally ramified in $\Q(\zeta_{16}),$ we can see that all the local invariants of $\textrm{End}_k^0(X)$ are zero, and hence, it follows from Proposition \ref{local inv}-(b) that $\textrm{End}_k^0(X)=\Q(\pi_X)$ and it is a CM-field of degree $2r$ over $\Q.$ Hence, we get $r=4$. Now, since $\Z[\zeta_{16}]$ is a (unique) maximal $\Z$-order in $\Q(\zeta_{16}),$ there exists a simple abelian variety $X^{\prime}$ over $k$ such that $X^{\prime}$ is $k$-isogenous to $X$ and $\textrm{End}_k (X^{\prime}) = \Z[\zeta_{16}]$ by \cite[Theorem 3.13]{17}. Note also that $C_{16} \cong \langle \zeta_{16} \rangle \leq \Z[\zeta_{16}]^{\times} = \Z^3 \times \langle \zeta_{16} \rangle = \textrm{Aut}_k (X^{\prime}).$ Then we can proceed as in the proof of (1) to see that $G$ is maximal in the isogeny class of $X.$ \\
%\indent Now, let $\mathcal{L}$ be an ample line bundle on $X^{\prime}$, and put
  %\begin{equation*}
    %\mathcal{L}^{\prime}:=\bigotimes_{f \in \langle \zeta_{16} \rangle} f^* \mathcal{L}.
  %\end{equation*}
  %Then $\mathcal{L}^{\prime}$ is also an ample line bundle on $X^{\prime}$ that is preserved under the action of $\langle \zeta_{16} \rangle$ so that $\langle \zeta_{16} \rangle \leq \textrm{Aut}_k(X^{\prime},\mathcal{L}^{\prime}).$
%Note also that the maximal finite subgroup of $\Z[\zeta_{16}]$ is $\langle \zeta_{16} \rangle \cong C_{16}$ by Dirichlet Unit Theorem. Since $\textrm{Aut}_k (X^{\prime},\mathcal{L}^{\prime})$ is a finite subgroup of the multiplicative group of $\Q(\zeta_{16}),$ it follows that
  %\begin{equation*}
    %G = \langle \zeta_{16} \rangle =\textrm{Aut}_k(X^{\prime},\mathcal{L}^{\prime}).
  %\end{equation*}
%Furthermore, since $\langle \zeta_{16} \rangle$ is a maximal finite subgroup of $\Q(\zeta_{16})^{\times}$, we can conclude that  \\
\indent (8)-(10): For $G=C_{20}$ (resp.\ $C_{24}$), let $k=\F_4$ and let $\pi = 2 \cdot \zeta_{20}$ (resp.\ $\pi=2 \cdot \zeta_{24}$). For $G=C_{30},$ let $k=\F_{25}$ and let $\pi = 5 \cdot \zeta_{30}.$ Then we can proceed by a similar argument as in the proof of (7) with these choices. \\ 
\indent (11) Take $G=C_5 \rtimes C_{8}.$ Let $k=\F_{25}$ and let $\pi$ be a zero of the quartic polynomial $t^4 -30t^2 + 625 \in \Z[t]$ so that $\pi$ is a $25$-Weil number. By Theorem \ref{thm HondaTata}, there exists a simple abelian variety $X$ over $\F_{25}$ of dimension $r$ such that $\pi_X$ is conjugate to $\pi.$ Then we have that $K:=\Q(\pi_X)=\Q(2\sqrt{5}-\sqrt{-5})=\Q(\zeta_{20}+\zeta_{20}^9),$ and hence, $K$ has no real embeddings. Note also that $5$ splits into a product of two primes in $K$ and we can compute that $\textrm{inv}_{v}(\textrm{End}_k^0(X))=1/2$ for any place $\nu$ of $K$ lying over $5$. Hence it follows from Proposition \ref{local inv}-(b) that $D:=\textrm{End}_k^0(X)$ is a quaternion algebra over $\Q(\pi_X)$ (that is ramified exactly at the two places of $K$ lying over $5$), which, in turn, implies that $r=4$ by Corollary \ref{cor TateEnd0}-(b). Also, in view of \cite[Table 5]{Inneke}, by considering the ramification behavior of $D$ and the local Schur index, we can conclude that $G \leq D^{\times}.$ Furthermore, since $5$ splits into a product of two primes in $\Q(\zeta_{20})$, it follows from the first theorem of \cite[p. 407]{10} that $\Q(\zeta_{20})$ embeds into $D.$ \\
\indent  Now, let $\mathcal{O}$ be a maximal $\Z$-order in $D$ that contains the group $G.$ Then since $\mathcal{O}$ is a maximal $\Z$-order in $D,$ there exists a simple abelian variety $X^{\prime}$ over $k$ such that $X^{\prime}$ is $k$-isogenous to $X$ and $\textrm{End}_k(X^{\prime})=\mathcal{O}$ by \cite[Theorem 3.13]{17}. Note also that $G \leq \mathcal{O}^{\times}=\textrm{Aut}_k(X^{\prime})$ and $G$ is a maximal finite subgroup of $D^{\times}$ by Theorem \ref{deg2 case} and Lemma \ref{O,I not lem}. Then we can proceed as in the proof of (1) to see that $G$ is maximal in the isogeny class of $X.$  \\
\indent (12) Take $G=C_3 \rtimes C_{16}.$ Let $k=\F_{81}$ and let $\pi$ be a zero of the quartic polynomial $t^4 -126t^2 + 6561 \in \Z[t]$ so that $\pi$ is a $81$-Weil number. By Theorem \ref{thm HondaTata}, there exists a simple abelian variety $X$ over $\F_{81}$ of dimension $r$ such that $\pi_X$ is conjugate to $\pi.$ Then we have that $K:=\Q(\pi_X)=\Q(6\sqrt{2}-3\sqrt{-1})=\Q(\zeta_{8}),$ and hence, $K$ has no real embeddings. Note also that $3$ splits into a product of two primes in $K$ and we can compute that $\textrm{inv}_{v}(\textrm{End}_k^0(X))=1/2$ for any place $\nu$ of $K$ lying over $3$. Hence it follows from Proposition \ref{local inv}-(b) that $D:=\textrm{End}_k^0(X)$ is a quaternion algebra over $\Q(\pi_X)$ (that is ramified exactly at the two places of $K$ lying over $3$), which, in turn, implies that $r=4$ by Corollary \ref{cor TateEnd0}-(b). Also, in view of \cite[Table 5]{Inneke}, by considering the ramification behavior of $D$ and the local Schur index, we can conclude that $G \leq D^{\times}.$ Furthermore, since $3$ splits into a product of two primes in both $\Q(\zeta_{16})$ and $\Q(\zeta_{24})$, it follows from the first theorem of \cite[p. 407]{10} that both $\Q(\zeta_{16})$ and $\Q(\zeta_{24})$ embed into $D.$ Then we can proceed as in the proof of (11) to see that $G$ is maximal in the isogeny class of $X.$ \\
\indent (13) Take $G=C_5 \rtimes C_{16}.$ Let $k=\F_{625}$ and let $\pi$ be a zero of the quadratic polynomial $t^2 -30t+625 \in \Z[t]$ so that $\pi$ is a $625$-Weil number. Then by Theorem \ref{thm HondaTata}, there exists a simple abelian variety $X$ of dimension $r$ over $\F_{625}$ such that $\pi_X$ is conjugate to $\pi,$ and, in fact, we can have that $r=4$ by \cite[Proposition 2.5]{11}. Hence it follows that we have $\Q(\pi_X)=\Q(\sqrt{-1})$ and $D:=\textrm{End}_k^0(X)$ is a central simple division algebra of degree $4$ over $\Q(\sqrt{-1})$ by Lemma \ref{poss end alg}. (In fact, we can compute the local invariants directly to see that they are $1/4, 3/4$ at the two places of $K$ above $5$, and $0$ at other places of $K$.) Also, in view of \cite[Table 5]{Inneke}, by considering the ramification behavior of $D$ and the local Schur index, we can conclude that $G \leq D^{\times}.$ Furthermore, since $5$ splits completely in $\Q(\sqrt{-1})$ and splits into a product of two primes in both $\Q(\zeta_{16})$ and $\Q(\zeta_{20})$, it follows from the first theorem of \cite[p. 407]{10} that both $\Q(\zeta_{16})$ and $\Q(\zeta_{20})$ embed into $D.$ Then we can proceed as in the proof of (11) to see that $G$ is maximal in the isogeny class of $X.$\\
\indent This completes the proof.
\end{proof}

\begin{remark}
According to the proof of Theorem \ref{main thm}, we can see that all the candidates for the possible types of endomorphism algebras given in Lemma \ref{poss end alg} are actually realizable. As a consequence, the set
\begin{equation*}
\left\{m \in \Q~|~ m=\frac{2 \cdot \dim X}{[\textrm{End}_k^0(X):\Q]}~\textrm{for some simple abelian variety $X$ over a field $k$}\right\}
\end{equation*}
contains $\{1, 1/2, 1/4\}$ as a subset. In other words, this can be regarded as a concrete example of \cite[$\S$2]{Oort}. 
\end{remark}

\section{Application to Jordan constants}\label{Jordan sec}
In this section, we recall some of the facts in the theory of Jordan groups, following \cite{Popov}, and obtain one result on the values of Jordan constants of the automorphism groups of abelian fourfolds over finite fields, which can be regarded as an application of our main theorem. First, we recall the notions of Jordan groups and Jordan constants, which are of our main interests in this section.

%\subsection{Jordan Groups}\label{jordan gp}
%In this section, we review the notion of Jordan groups, which is of our main interest in this paper. Our main reference is a paper of Popov \cite{Popov}.  
\begin{definition}[{\cite[Definition 1]{Popov}}]\label{Jordan}
A group $G$ is called a \emph{Jordan group} if there exists an integer $d>0$, depending only on $G$, such that every finite subgroup $H$ of $G$ contains a normal abelian subgroup whose index in $H$ is at most $d.$ The minimal such $d$ is called the \emph{Jordan constant of $G$} and is denoted by $J_{G}$.
\end{definition}
In particular, it is easy to see that if every finite subgroup of $G$ is abelian, then $G$ is a Jordan group and $J_G =1.$ The following example illustrates this situation.

\begin{example}\label{ab exam}
Let $k$ be an algebraically closed field of characteristic zero, and let $X \subseteq \A_k^4$ be the nonsingular hypersurface defined by the equation $x_1^2 x_2 +x_3^2 + x_4^3 +x_1 =0.$ Then in view of \cite[\S2.2.5]{Popov}, every finite subgroup of $\textrm{Aut}(X)$ is cyclic, and hence, we have $J_{\textrm{Aut}(X)}=1.$
\end{example}
For our later use for computing the Jordan constants of certain infinite groups, we also record one useful result.
\begin{lemma}\label{Jor lem}
Let $G$ be a Jordan group. Then every subgroup $H$ of $G$ is also a Jordan group and we have
\begin{equation*}
J_G = \sup_{H \leq G} J_H
\end{equation*}  
where the supremum is taken over all finite subgroups $H$ of $G.$
\end{lemma}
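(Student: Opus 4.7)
The plan is to unwind Definition \ref{Jordan} carefully and then verify both inequalities by chasing finite subgroups. The key observation is that if $H \leq G$, then any finite subgroup of $H$ is automatically a finite subgroup of $G$, so the Jordan condition transfers downward along inclusions essentially for free.

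First I would prove that every subgroup $H \leq G$ is Jordan with $J_H \leq J_G$. Let $K$ be any finite subgroup of $H$; then $K$ is also a finite subgroup of $G$, so by hypothesis $K$ contains a normal abelian subgroup $A \triangleleft K$ with $[K:A] \leq J_G$. This shows $H$ satisfies the defining property of a Jordan group with constant at most $J_G$, hence $J_H$ exists and $J_H \leq J_G$. Taking the supremum over all finite subgroups $H \leq G$ gives the inequality
\[
\sup_{\substack{H \leq G \\ |H| < \infty}} J_H \leq J_G.
\]

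For the reverse inequality, let $d := \sup_{H \leq G,\ |H| < \infty} J_H$ (which may be $+\infty$ a priori, but if it is finite we are done below, and if it equals $J_G$ we are also done). Let $K$ be an arbitrary finite subgroup of $G$. Applying the definition of $J_K$ to the (finite) subgroup $K \leq K$ itself, we obtain a normal abelian subgroup $A \triangleleft K$ with $[K:A] \leq J_K \leq d$. Since $K$ was an arbitrary finite subgroup of $G$, every finite subgroup of $G$ contains a normal abelian subgroup of index at most $d$. By the minimality clause in the definition of $J_G$, this forces $J_G \leq d$, and combining with the previous paragraph yields the desired equality.

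The only subtle point, which is barely an obstacle, is remembering that for a finite group $K$ the constant $J_K$ controls $K$ itself (via $K \leq K$), not merely its proper subgroups; this is what lets the supremum on the right dominate $J_G$. No structural theory of Jordan groups is needed beyond Definition \ref{Jordan}.
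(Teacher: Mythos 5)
Your proof is correct and follows essentially the same route as the paper: the inequality $\sup_H J_H \le J_G$ is immediate from restriction of the Jordan property, and the reverse inequality hinges on the same key observation that a finite group $K$ is a finite subgroup of itself, so $J_K$ bounds the index of an abelian normal subgroup of $K$. The only cosmetic difference is that the paper argues the reverse inequality by contradiction while you argue it directly, and the paper cites Popov for the (trivial) heredity of the Jordan property, which you instead verify from the definition.
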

\begin{proof}
The assertion that every subgroup of $G$ is also a Jordan group follows from \cite[Theorem 3]{Popov}. For the other assertion, let $\displaystyle d = \sup_{H \leq G} J_H,$ for convenience. First, we note that if $H$ is a finite subgroup of $G$, then we have $J_H \leq J_G.$ Hence it clearly follows that $d \leq J_G.$ To prove the reverse inequality, suppose on the contrary that $d< J_G.$ Then by the minimality of $J_G,$ we can see that there is a finite subgroup $H^{\prime}$ of $G$ such that $H^{\prime}$ contains no abelian normal subgroups of index $\leq d.$ Put $d^{\prime}:= J_{H^{\prime}}.$ Then since $d^{\prime} \leq d,$ it follows from the definition of $J_{H^{\prime}}$ that every finite subgroup of $H^{\prime}$ contains an abelian normal subgroup of index $\leq d^{\prime} \leq d.$ In particular, $H^{\prime}$ contains an abelian normal subgroup of index $\leq d$, which is a contradiction. Hence we get $J_G \leq d.$

\indent This completes the proof.
\end{proof}
This lemma is especially helpful if we have a great deal of information on the set of all finite subgroups of an infinite group $G.$ \\

The next lemma says that the multiplicative subgroup of certain central simple division algebras is a Jordan group.
%For an explicit use of Lemma \ref{Jor lem}, see the proof of Theorem \ref{Jor thm} below.

\begin{lemma}\label{Jor lem2}
Let $X$ be a simple abelian variety of dimension $4$ over a finite field $k$. Let $D=\textrm{End}_k^0(X)$ and $K=\mathbb{Q}(\pi_X)$. Then $D^{\times}$ is a Jordan group.
\end{lemma}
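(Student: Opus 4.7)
The plan is to reduce the statement to a finite check, using the classification of finite subgroups of $D^\times$ that has already been established. By Corollary \ref{cor 19}, every finite subgroup of $D^\times = \textrm{End}_k^0(X)^\times$ is isomorphic to one of the groups on an explicit \emph{finite} list $\cF$ (namely, the ten cyclic groups $C_n$ for $n \in \{2,4,6,8,10,12,16,20,24,30\}$, the nine dicyclic groups, the three semidirect products $C_5 \rtimes C_8$, $C_3 \rtimes C_{16}$, $C_5 \rtimes C_{16}$, and the binary polyhedral groups $\mathfrak{T}^*, \mathfrak{O}^*, \mathfrak{I}^*$). The finiteness of $\cF$ is the only non-trivial input that I would need.

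For each $H \in \cF$, since $H$ is itself a finite group, at least one normal abelian subgroup $A \trianglelefteq H$ exists; moreover, most members of $\cF$ are abelian or contain an obvious large cyclic normal subgroup coming from the presentations $G_{m,r}$ used in Section \ref{findiv}. Let $i(H)$ denote the minimum of $[H:A]$ as $A$ ranges over the normal abelian subgroups of $H$. Each $i(H)$ is a positive integer, and I set
\begin{equation*}
d := \max_{H \in \cF} i(H),
\end{equation*}
which is a finite positive integer precisely because $\cF$ is a finite set.

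Now given any finite subgroup $K \leq D^\times$, Corollary \ref{cor 19} yields $K \cong H$ for some $H \in \cF$, and transporting a minimum-index normal abelian subgroup of $H$ across this isomorphism produces a normal abelian subgroup of $K$ of index at most $i(H) \leq d$. By Definition \ref{Jordan}, this exhibits $D^\times$ as a Jordan group, with $J_{D^\times} \leq d$, completing the proof.

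I do not foresee any substantive obstacle: the lemma is essentially a direct corollary of the finiteness established in Corollary \ref{cor 19}, with all the serious work (applying Amitsur's theorems and the arithmetic of $D$ to cut down the list of candidate subgroups) already completed in Section \ref{findiv}. The delicate and more interesting problem is to determine the precise value of $J_{D^\times}$, which according to the abstract lies in $\{1,2,4\}$; that sharper statement presumably belongs to the next theorem and would require identifying, among the groups in $\cF$, exactly which ones actually occur in a given $D^\times$ and then computing the associated indices (for instance, abelian groups contribute $1$, the dicyclic and semidirect examples contribute $2$, while $\mathfrak{T}^*$ with its $Q_8$ subgroup of index $3$ and abelian normal subgroups of index $12$ would contribute more).
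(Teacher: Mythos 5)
Your proof is correct in substance, but it takes a genuinely different route from the paper. The paper does not invoke the classification of finite subgroups at all: it splits into the three cases of Lemma \ref{poss end alg}, embeds $D^{\times}$ into $\GL_1(\C)$, $\GL_2(\C)$, or $\GL_4(\C)$ by splitting $D$ over a maximal subfield, and then quotes Jordan's classical theorem together with the hereditary property of Jordan groups from \cite{Popov}. Your argument instead leverages Corollary \ref{cor 19} to reduce everything to a finite list $\cF$ and takes $d=\max_{H\in\cF} i(H)$. What your route buys is an explicit (and in fact sharp, after Section \ref{main} prunes the list further) bound on $J_{D^\times}$, essentially anticipating the computation in the final theorem; what the paper's route buys is independence from the heavy Amitsur-based classification, so the Jordan property is seen to be a soft consequence of $\dim_{\Q}D\le 32$ alone. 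One small point you should patch: Corollary \ref{cor 19} rests on Theorems \ref{deg2 case} and \ref{deg4 case}, which are stated only for finite subgroups of \emph{even} order (indeed odd cyclic groups such as $C_5$ do occur in $D^{\times}$ but are absent from the list). This is harmless for your argument: since $-1$ is central in $D$, any finite subgroup $H\le D^{\times}$ sits inside the even-order finite group $H\cdot\{\pm1\}$, which is on the list, and intersecting a minimal-index normal abelian subgroup of $H\cdot\{\pm1\}$ with $H$ gives a normal abelian subgroup of $H$ of index at most $d$. With that one line added, your proof is complete.
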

\begin{proof}
In view of Lemma \ref{poss end alg}, we need to consider the following three cases: \\
(1) If $D$ is a CM-field of degree $8,$ then we have $D^{\times} \leq \mathbb{C}^{\times} = \textrm{GL}_1(\mathbb{C}).$ \\
(2) If $D$ is a central simple division algebra of degree $2$ over $K$, which is a quartic CM-field, then let $L$ be a maximal subfield of $D.$ By \cite[Theorem 16 in \S29]{Lor(2008)}, $L$ splits $D$ i.e.\ we have $D \otimes_K L \cong M_2(L).$ Then it follows that $D \otimes_K \mathbb{C} \cong M_2 (\mathbb{C})$ so that $D^{\times} \leq \textrm{GL}_2(\C).$ \\
Similarly, \\
(3) If $D$ is a central simple division algebra of degree $4$ over $K$, which is an imaginary quadratic field, then let $L$ be a maximal subfield of $D.$ Again, by \cite[Theorem 16 in \S29]{Lor(2008)}, we have $D \otimes_K L \cong M_4(L).$ Then it follows that $D \otimes_K \mathbb{C} \cong M_4 (\mathbb{C})$ so that $D^{\times} \leq \textrm{GL}_4(\C).$ \\
\indent Now, the desired result follows from \cite[Theorems 1 and 3]{Popov}. \\
\indent This completes the proof.
\end{proof}

 Now, we are ready to introduce our main result of this section.

\begin{theorem}\label{Jor thm}
Let $X$ be a simple abelian variety of dimension $4$ over a finite field $k$. Let $D=\textrm{End}_k^0(X)$ and $G=\textrm{Aut}_k(X).$ Then $G$ is a Jordan group and the Jordan constant $J_G$ of $G$ is contained in the set $\{1,2,4\}.$ 
\end{theorem}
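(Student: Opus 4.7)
The plan is to combine the two Jordan-group lemmas already proved in this section with the classification of finite subgroups of $D^{\times}$ obtained in Sections~\ref{findiv} and \ref{main}. Since $G = \textrm{Aut}_k(X) \subseteq D^{\times}$ and $D^{\times}$ is a Jordan group by Lemma \ref{Jor lem2}, the first assertion of Lemma \ref{Jor lem} implies at once that $G$ is a Jordan group, and its second assertion reduces the computation of $J_G$ to $\sup_H J_H$, with $H$ ranging over the finite subgroups of $G$.

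Next, every finite $H \leq G$ is in particular a finite subgroup of $D^{\times}$, so Corollary \ref{cor 19}, refined by the exclusion Lemmas \ref{dic not lem0}, \ref{dic not lem}, \ref{Q8 not max lem}, \ref{O,I not lem}, \ref{Nebe not lem}, and \ref{dic12 not lem}, confines $H$ to an explicit finite list: either $H$ is cyclic, or $H$ is isomorphic to one of $C_5 \rtimes C_8$, $C_3 \rtimes C_{16}$, $C_5 \rtimes C_{16}$. (Odd-order cyclic subgroups such as $C_3$, $C_5$, or $C_{15}$ are possible \emph{a priori}, but Theorem \ref{thm 12} guarantees that every non-cyclic finite subgroup of a division ring has even order, so these cyclic groups merely contribute $J_H = 1$ and do not affect the supremum.)

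For each $H$ appearing in the list one then computes $J_H$ directly. Cyclic groups give $J_H = 1$. For the three non-cyclic groups, the largest abelian normal subgroup is read off from the kernel of the defining action on the normal cyclic factor: in $C_5 \rtimes C_8$ the action of $C_8$ on $C_5$ factors through $C_8 \twoheadrightarrow C_4 = \textrm{Aut}(C_5)$ with kernel $C_2 \leq C_8$, yielding $C_5 \times C_2 \cong C_{10}$ as a maximal abelian normal subgroup of index $4$; no subgroup of index $2$ is abelian, since the unique order-$20$ subgroup is the non-abelian Frobenius group $F_{20}$, so $J_{C_5 \rtimes C_8} = 4$. Analogously, $C_3 \rtimes C_{16}$ has action kernel $C_8 \leq C_{16}$, giving $C_3 \times C_8 \cong C_{24}$ abelian normal of index $2$, so $J_{C_3 \rtimes C_{16}} = 2$; and $C_5 \rtimes C_{16}$ has action kernel $C_4 \leq C_{16}$, giving $C_5 \times C_4 \cong C_{20}$ abelian normal of index $4$, with no abelian subgroup of order $40$ (the unique one, $C_5 \rtimes C_8$, is non-abelian), so $J_{C_5 \rtimes C_{16}} = 4$.

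Taking the supremum over the finite subgroups yields $J_G \in \{1, 2, 4\}$. The main obstacle is the case analysis of abelian normal subgroups of the three non-cyclic groups, but this reduces to a straightforward inspection of the semidirect-product structure, so the substantive work is already in place from the earlier sections.
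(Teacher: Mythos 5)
Your overall strategy coincides with the paper's: reduce via Lemma \ref{Jor lem2} and Lemma \ref{Jor lem} to computing $J_H$ for the finite subgroups $H$ of $D^{\times}$ surviving the exclusion lemmas, and then handle the three non-cyclic groups by hand. However, there is a concrete error in your computation for $C_5 \rtimes C_8$: you have identified the wrong semidirect product. The group occurring here is the one with GAP ID $[40,1]$, arising in Lemma \ref{lem 15}(2)(ii) as $G_{20,r}$ with $r \equiv 9 \pmod{20}$, i.e.\ generated by $a$ of order $20$ and $b$ with $b^2=a^5$ and $bab^{-1}=a^9$. Writing $a_5=a^4$ and $a_4=a^5$, one checks $ba_5b^{-1}=a_5^{-1}$ and $ba_4b^{-1}=a_4$, so the action of $C_8=\langle b\rangle$ on $C_5$ is by \emph{inversion}, with kernel $C_4=\langle b^2\rangle$, not the faithful action onto $\Aut(C_5)\cong C_4$ with kernel $C_2$ that you describe. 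Consequently $\langle a\rangle\cong C_{20}$ is a cyclic (hence abelian) normal subgroup of index $2$, and $J_{C_5\rtimes C_8}=2$, not $4$ as you claim; your order-$20$ subgroup is $C_{20}$, not the Frobenius group $F_{20}$. (The faithful semidirect product $[40,3]$ that you implicitly analyzed fails Amitsur's criterion and does not embed in a division ring at all, which is why it never appears in Lemma \ref{lem 15}.)

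This slip does not overturn the stated conclusion, since $4$ still lies in $\{1,2,4\}$, and your computations for $C_3\rtimes C_{16}$ (kernel $C_8$, abelian normal $C_{24}$ of index $2$, so $J=2$) and for $C_5\rtimes C_{16}$ (kernel $C_4$, abelian normal $C_{20}$ of index $4$, and the unique order-$40$ normal subgroup $C_5\rtimes C_8$ non-abelian, so $J=4$) agree with the paper. But the precise value $J_{C_5\rtimes C_8}=2$ matters for the refinement recorded after the theorem, namely that each of $1$, $2$, $4$ is actually realized as $J_{\textrm{Aut}_k(X)}$, so the misidentification should be corrected. One further small point: when passing from $J_H\geq[H:C_{20}]$ to equality you should also check, as the paper does, that every \emph{proper} subgroup of $H$ admits an abelian normal subgroup of the required index; for $C_5\rtimes C_{16}$ the non-cyclic proper subgroup $C_5\rtimes C_8$ has $J=2\leq 4$, so this is harmless, but it is a step that needs to be said.
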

\begin{proof}
Since $G \leq D^{\times},$ the first assertion follows from \cite[Theorem 3]{Popov} and Lemma \ref{Jor lem2}. For the second assertion, let $H$ be a maximal finite subgroup of $G.$ Then by Theorem \ref{main thm}, we know that $H$ must be one of the $13$ groups in Table 2 above. Also, in view of Lemma \ref{Jor lem}, it suffices to compute the Jordan constant $J_H$ of $H$. We consider the following three cases: \\
(1) If $H$ is cyclic i.e.\ if $H=C_n$ for $n\in \{2,4,6,8,10,12,16,20,24,30\},$ then we get $J_H = 1$. \\
(2) If $H=C_5 \rtimes C_8$ (resp.\ $H=C_3 \rtimes C_{16}$), then since all the normal (abelian) subgroups of $H$ are $C_n$ for $n \in \{1,2,4,5,10,20\}$ (resp.\ $C_n$ for $n \in \{1,2,3,4,6,8,12,24\}$) (see \cite{GN(1)}) (resp.\ see \cite{GN(2)}), it follows that we have $J_H \geq [H:C_{20}]=2$ (resp.\ $J_H \geq [H:C_{24}]=2).$ Now, if $J_H > 2$, in both cases, then by the minimality of $J_H$, it follows that there is a subgroup $H^{\prime}$ of $H$ such that $H^{\prime}$ contains no abelian normal subgroups of index $\leq 2.$ But since every subgroup of $H$ (in both cases) is cyclic, this is a contradiction. Thus we get $J_H =2.$ \\
(3) If $H=C_5 \rtimes C_{16},$ then since all the normal subgroups of $H$ are $C_5 \rtimes C_8$ and $C_n$ for $n \in \{1,2,4,5,10,20\}$, among which the abelian ones are just the cyclic groups (see \cite{GN(3)}), it follows that we have $J_H \geq [H:C_{20}]= 4.$ Then we can proceed as in the proof of (2) to get that $J_H = 4.$ \\
\indent This completes the proof.
\end{proof}
In fact, according to the proof of Theorem \ref{main thm}, the converse of Theorem \ref{Jor thm} is also true in the sense that if $n \in \{1,2,4\},$ then there is a simple abelian fourfold $X$ over a finite field $k$ such that $J_{\textrm{Aut}_k(X)} = n.$
\begin{remark}
In the spirit of \cite[Corollary 4.3]{Hwa(2020)}, it might be interesting to consider the case when the base field $k$ of a simple abelian fourfold $X$ is algebraically closed of positive characteristic (or characteristic zero).
\end{remark}

We conclude this paper with a group theoretic observation. To this aim, we recall that a finite group $H$ is called a \emph{Z-group} (resp.\ an \emph{A-group}) if all the Sylow subgroups of $H$ are cyclic (resp.\ abelian). In fact, it turns out that all the finite groups $H$ in Table 2 are Z-groups.
\begin{remark}
For an infinite group $G$ and an integer $n \geq 1$, we consider the following four properties: \\
($Z$) Every finite subgroup of $G$ is a Z-group. \\
($A$) Every finite subgroup of $G$ is an A-group. \\
($Z_n$) If $H$ is a finite subgroup of $G,$ then $H$ is a Z-group with $|H| \leq n$. \\
($A_n$) If $H$ is a finite subgroup of $G,$ then $H$ is an A-group with $|H| \leq n$. \\
\indent For an example of an infinite group $G$ with the property ($Z$), whence ($A$), we can take $G=\Z_{p^{\infty}}$, \emph{the Pr$\ddot u$fer $p$-group} for a prime $p.$ \\
\indent Now, let $G$ be an infinite group with the property ($Z_n$) for an integer $1 \leq n \leq 100$ (if it exists). Then we can show by Lemma \ref{Jor lem}, together with the knowledge of all Z-groups of order $\leq 100$ (see \cite{GN(4)}) that:\\
(i) If $1 \leq n \leq 41,$ then we have $J_G \leq 4$. \\
(ii) If $42 \leq n \leq 100,$ then we have $J_G \leq 6.$  \\
With this observation in mind, we can consider the following problems.
\begin{problem}\label{gp prob}
%Let $n, m \geq 1$ be two integers. \\
(a) Let $S$ be the set of integers $n \geq 1$ such that there is an infinite group $G$ with the property ($Z_n$). Is it true that $S \ne \phi?$ If so, can we describe the set $S$? In particular, is $S$ finite or infinite?\\
(b) Similarly, let $T$ be the set of integers $n \geq 1$ such that there is an infinite group $G$ with the property ($A_n$). Is it true that $T \ne \phi?$ If so, can we describe the set $T$? In particular, is $T$ finite or infinite? (Note that $S \subseteq T.$ Is it true that the inclusion is proper?) \\
%What about if we replace the property ($Z_n$) by the property $(A_n)$ in (a)? \\
(c) Suppose that $S$ is infinite. For each $n \in S$, let $\displaystyle J_n = \sup_{G} J_G$ where the supremum is taken over all infinite groups $G$ with the property ($Z_n$). Does there exist an integer $M>0$ such that $J_n \leq M$ as $n \rightarrow \infty$ with $n \in S$? If so, what is such an $M$? \\
(d) Let us define the following two sets 
$$U = \{(n,m) \in \Z^2~|~\textrm{there is an infinite group $G$ with the property $(Z_n)$ and $J_G = m$} \}$$ and 
$$V = \{(n,m) \in \Z^2~|~\textrm{there is an infinite group $G$ with the property $(A_n)$ and $J_G = m$}\}.$$
What can we say about those two sets $U$ and $V$? 
%Given an integer $m \geq 1$ and an integer $n$ as in (a) (resp.\ (b)), does there exist an infinite group $G$ with the property $(Z_n)$ (resp.\ $(A_n)$) such that $J_G = m?$ In other words, can we describe the set of ordered pairs $(n,m)$ such that there exists an infinite group $G$ with the property $(Z_n)$ or $(A_n)$ such that $J_G = m?$
\end{problem} 
As an illustration, to see whether $(42,6) \in U$ or not, it seems that we need to determine whether we can produce an infinite group $G$ with the property $(Z_{42})$ containing the group $\textrm{F}_7 \cong \textrm{AGL}_1(\F_7)$, \emph{the Frobenius group of order $42$}. 
\end{remark}

\section*{Acknowledgements}
The author was supported by a KIAS individual Grant (MG069901) at Korea Institute for Advanced Study. He sincerely thanks Professor Jing Yu for asking a question on the subject during a conference at KAIST, which motivated this work.
%sincerely grateful to the referee for reading the manuscript carefully and for many valuable comments to improve the quality of the article. 

\end{document}